


\documentclass{amsart}

\usepackage[mathscr]{eucal}
\usepackage{amssymb}
\usepackage[usenames,dvipsnames]{color}
\usepackage[normalem]{ulem}
\usepackage{amsthm}
\usepackage{bbold}
\usepackage{enumerate}
\usepackage{array}
\usepackage{amsmath}


\numberwithin{equation}{section}
\setcounter{tocdepth}{1}


\usepackage[all]{xy}
\SelectTips{cm}{}
\newdir{ >}{{}*!/-10pt/\dir{>}}


\usepackage[colorlinks=true,linkcolor={Brown},citecolor={Brown}]{hyperref}



\swapnumbers 

\newtheorem{Thm}[equation]{Theorem}
\newtheorem{Prop}[equation]{Proposition}
\newtheorem*{Prop*}{Proposition}
\newtheorem{Lem}[equation]{Lemma}
\newtheorem{Cor}[equation]{Corollary}
\theoremstyle{remark}
\newtheorem{Def}[equation]{Definition}
\newtheorem{Not}[equation]{Notation}
\newtheorem{Exa}[equation]{Example}
\newtheorem{Cons}[equation]{Construction}
\newtheorem{Conv}[equation]{Convention}
\newtheorem{Hyp}[equation]{Hypothesis}

\newtheorem{Rem}[equation]{Remark}
\newtheorem{Que}[equation]{Question}


\newcommand{\nc}{\newcommand}
\nc{\dmo}{\DeclareMathOperator}

\dmo{\Ab}{Ab}
\dmo{\Ann}{Ann}
\dmo{\coev}{coev}
\dmo{\coker}{Coker}
\dmo{\Der}{D}
\dmo{\End}{End}
\dmo{\ev}{ev}
\dmo{\Hom}{Hom}
\dmo{\id}{id}
\dmo{\Id}{Id}
\dmo{\img}{Im}
\dmo{\Ind}{Ind}
\dmo{\Ker}{Ker}
\dmo{\Loc}{Loc}
\dmo{\modname}{mod}%
\dmo{\Mod}{Mod}
\dmo{\opname}{op}
\dmo{\Qcoh}{Qcoh}
\dmo{\Res}{Res}
\dmo{\rmH}{H}
\dmo{\SH}{SH}
\dmo{\smallperf}{perf}
\dmo{\Spc}{Spc}
\dmo{\stab}{stab}
\dmo{\Stab}{Stab}
\dmo{\supp}{supp}

\nc{\adjto}{\rightleftarrows}
\nc{\adj}{\dashv}
\nc{\aka}{{a.\,k.\,a.}\ }
\nc{\bbE}{\mathbb{E}}
\nc{\bbM}{\mathbb{M}}
\nc{\bbQ}{\mathbb{Q}}
\nc{\bbZ}{\mathbb{Z}}
\nc{\calI}{\mathcal{I}}
\nc{\calO}{\mathcal{O}}
\nc{\cat}[1]{\mathscr{#1}}
\nc{\colim}{\mathop{\mathrm{colim}}}
\nc{\Dperf}{\Der^{\smallperf}}
\nc{\eg}{{\sl e.g.}}
\nc{\Endcat}[1]{\End_{\cat #1}}
\nc{\gm}{\mathfrak{m}}
\nc{\Homcat}[1]{\Hom_{\cat #1}}
\nc{\hook}{\hookrightarrow}
\nc{\ideal}[1]{\langle #1\rangle}
\nc{\ie}{{\sl i.e.}\ }
\nc{\ihomcat}[1]{\ihom_{\cat #1}}
\nc{\ihom}{{\mathsf{hom}}} 
\nc{\into}{\mathop{\rightarrowtail}}
\nc{\inv}{^{-1}}
\nc{\isotoo}{\overset{\sim}{\,\too\,}}
\nc{\isoto}{\overset{\sim}{\,\to\,}}
\nc{\kk}{k}
\nc{\matrice}[1]{\begin{pmatrix} #1 \end{pmatrix}}
\nc{\Mid}{\,\big|\,}
\nc{\mmod}{\modname\kern-0.1em\text{-}}%
\nc{\MMod}{\Mod\kern-0.1em\text{-}}%
\nc{\onto}{\mathop{\twoheadrightarrow}}
\nc{\op}{^{\opname}}
\nc{\otoo}[1]{\overset{#1}{\,\too\,}}
\nc{\oto}[1]{\overset{#1}\to}
\nc{\ourfrac}[2]{\genfrac{}{}{0pt}{}{\scriptstyle #1}{\scriptstyle #2}}
\nc{\Paul}[1]{{\color{OliveGreen}[#1]}}
\nc{\potimes}[1]{^{\otimes #1}}
\nc{\qquadtext}[1]{\qquad\textrm{#1}\qquad}
\nc{\quadtext}[1]{\quad\textrm{#1}\quad}
\nc{\restr}[1]{_{|_{\scriptstyle #1}}}
\nc{\SET}[2]{\big\{\,#1\Mid#2\,\big\}}
\nc{\sstab}{\,\text{--}\stabname}%
\nc{\too}{\mathop{\longrightarrow}\limits}
\nc{\unit}{\mathbb{1}}


\nc{\Greg}[1]{{\color{CarnationPink}#1}}
\nc{\Henning}[1]{{\color{Blue}#1}}

\nc{\Gout}[1]{\Greg{\sout{#1}}}

\dmo{\im}{Im}
\dmo{\fpname}{fp}
\dmo{\Nil}{Nil}
\dmo{\yoneda}{h}
\dmo{\Add}{Add}
\dmo{\Tor}{Tor}

\dmo{\smodname}{\stab}%
\dmo{\sMod}{\Stab}%
\dmo{\smod}{\stab}%

\nc{\MT}{\MMod\cat{T}^c}
\nc{\mT}{\mmod\cat{T}^c}
\nc{\MF}{\MMod\cat{F}^c}
\nc{\mF}{\mmod\cat{F}^c}
\nc{\MtT}{\MMod\widetilde{\cat{T}}^c}
\nc{\mtT}{\mmod\widetilde{\cat{T}}^c}
\nc{\mcA}{\cat{A}}
\nc{\mcB}{\cat{B}}
\nc{\topspace}{\vphantom{I^{I^I}}}
\nc{\bottomspace}{\vphantom{j_{j_j}}}
\nc{\fp}{^{\fpname}}
\nc{\boneda}{\bar\yoneda}
\nc{\bbe}{\mathbb{e}}
\nc{\bbf}{\mathbb{f}}
\nc{\bat}[1]{\bar{\cat{#1}}}
\nc{\hcat}[1]{\hat{\cat{#1}}}
\nc{\sfK}{\mathsf{K}}
\nc{\rf}{(\mathbf{rf1})}
\nc{\rfc}{(\mathbf{rf1^c})}
\nc{\rff}{(\mathbf{rf2})}
\nc{\tff}{$\otimes$-faithful}
\nc{\SpcT}{\Spc(\cat{T}^c)}
\nc{\SpcF}{\Spc(\cat{F}^c)}


\begin{document}


\title{Tensor-triangular fields: Ruminations}
\author{Paul Balmer}
\author{Henning Krause}
\author{Greg Stevenson}

\address{Paul Balmer, Mathematics Department, UCLA, Los Angeles, CA 90095-1555, USA}
\email{balmer@math.ucla.edu}
\urladdr{http://www.math.ucla.edu/~balmer}

\address{Henning Krause, Universit\"at Bielefeld, Fakult\"at f\"ur Mathematik,
Postfach 10\,01\,31, 33501 Bielefeld, Germany}
\email{hkrause@math.uni-bielefeld.de}
\urladdr{http://www.math.uni-bielefeld.de/~hkrause/}

\address{Greg Stevenson, School of Mathematics and Statistics,
University of Glasgow,
University Place,
Glasgow G12 8QQ
}
\email{gregory.stevenson@glasgow.ac.uk}
\urladdr{http://www.maths.gla.ac.uk/~gstevenson/}

\begin{abstract}
We examine the concept of field in tensor-triangular geometry. We gather examples and discuss possible approaches, while highlighting open problems. As the construction of residue tt-fields remains elusive, we instead produce suitable homological tensor-functors to Grothendieck categories.
\end{abstract}

\subjclass[2010]{18E30 (20J05, 55U35)}
\keywords{Residue field, tt-geometry, module category}

\thanks{P.\,Balmer supported by Humboldt Research Award and NSF grant~DMS-1600032.}

\maketitle


\vskip-\baselineskip\vskip-\baselineskip
\tableofcontents


\section*{Foreword}


This article is partly speculative. We investigate the concept of `fields' beyond algebra, in the general theory of tensor-triangulated categories. Readers can refresh their marvel at the beauty and ubiquity of tensor-triangulated categories by consulting standard references like~\cite{HoveyPalmieriStrickland97}. We further assume some minimal familiarity with the topic of \emph{tensor-triangular geometry}; see~\cite{BalmerICM} or~\cite{Stevenson16pp}.

A guiding principle of the tt-geometric philosophy is \emph{horizontal diversity}: We are not after a sophisticated notion of field tailored for homotopy theory, or for algebraic geometry, or for representation theory. We seek a broad notion of tt-field that must not only work in \emph{all} these examples but also in $KK$-theory of C${}^*$-algebras, in motivic theories, and in all future examples yet to be discovered.

Stable $\infty$-categorists and Quillen-model theorists should feel at home here, as long as they enjoy the occasional tensor product. We focus on homotopy categories and their tensor for the tt-axioms travel lightly between different mathematical realms. It is however possible that some of the problems we present below may one day be solved via richer structures.

\begin{Hyp}
\label{hyp:big}%
Throughout the paper a \emph{`big' tt-category} refers to a rigidly-compactly generated tensor-triangulated category~$\cat{T}$, see~\cite{BalmerFavi11,BalmerKrauseStevenson17pp}. By assumption, $\cat{T}$ admits all coproducts, its compact objects and its rigid objects coincide, and the essentially small subcategory~$\cat{T}^c$ of compact-rigids generates~$\cat{T}$ as a localizing subcategory. (Rigid objects are called `strongly dualizable' in~\cite{HoveyPalmieriStrickland97}.)
\end{Hyp}

The article is organized as follows. Section~\ref{se:intro} begins with a discussion of the concept of tt-field and the related question of producing a \emph{tt-residue field} for local tt-categories. That section ends with a review of our main results. In Sections~\ref{se:MT}, \ref{se:E} and~\ref{se:max} we propose an approach via abelian categories, replacing the evanescent triangular residue functor by a \emph{homological} one. We return to tt-fields in the final Sections~\ref{se:field} and~\ref{se:fairyland}. The former is dedicated to proving properties of tt-fields. In the latter, we assume the existence of a tt-residue field for a local tt-category~$\cat{T}$ and show how it matches the abelian-categorical results of Sections~\ref{se:MT}-\ref{se:max}. Appendix~\ref{app:Grothendieck} contains purely Grothendieck-categorical results, for reference.

\goodbreak
\section{Introduction to tt-fields and statement of results}
\label{se:intro}%
\medbreak

Let us start by recalling an elementary pattern of commutative algebra, namely the reduction of problems to local rings, and then to residue fields; schematically:
\[
\xymatrix@C=5em{
\{\textrm{\ global data\ }\}
 \ar[r]^-{\textrm{localization}}
& \{\textrm{\ local data\ }\}
 \ar[r]^-{\textrm{quotient}}
 \ar@/^1.5em/@{..>}[l]^-{\textrm{descent}}
& \{\textrm{\ residue field data\ }\}\,.
 \ar@/^1.5em/@{..>}[l]^-{\textrm{Nakayama}}
}
\]
The method for recovering global information from local data is generically called descent. On the other hand, a useful way to recover local information from the residue field is Nakayama's Lemma, at least under suitable finiteness conditions. We would like to assemble a similar toolkit in general tt-geometry.

Localization of tensor-triangulated categories is well understood (see Remark~\ref{rem:local}) and descent has been extensively studied. The present article focusses on the right-hand part of the above picture. Given a local tt-category~$\cat{T}$ (Definition~\ref{def:local}) we want to find a tt-category~$\cat{F}$ together with a tt-functor
\[
F\colon\cat{T}\to \cat{F}
\]
such that $\cat{F}$ is a `tt-field'. Clarifying the latter notion is one first difficulty. Such a tt-functor $F$ should satisfy some form of Nakayama, a property which most probably means that $F$ is conservative (detects isomorphisms) on compact-rigids. This conservativity on~$\cat{T}^c$ matches the behavior in examples and implies that $\SET{x\in\cat{T}^c}{F(x)=0}$ is the unique closed point~$(0)$ in the spectrum~$\Spc(\cat{T}^c)$.

So what should `tt-fields' be? In colloquial terms, a tt-field~$\cat{F}$ should be `an end to the tt-road': It should not admit non-trivial localizations, nor non-trivial `quotients'. In particular, we should not be able to `mod out' any non-zero object nor any non-zero morphism by applying a tt-functor going out of~$\cat{F}$.

Although vague, this preliminary intuition is sufficient to convince ourselves that some well-known tt-categories should be recognized as tt-fields. Of course, in commutative algebra, the derived category of a good old commutative field~$\kk$ should be a tt-field. Also, the module category over Morava $K$-theory (in a structured enough sense) should be a tt-field in topology. In these examples, the homotopy groups are `graded fields' $\kk[t,t\inv]$ with $k$ a field and $t$ in even degree, and all modules are sums of suspensions of the trivial module. (See some warnings in Remark~\ref{rem:warn}.) Consideration of so-called `cyclic shifted subgroups', or `$\pi$-points', in modular representation theory of finite groups lead us to another class of candidates:
\begin{Prop*}
Let $p$ be a prime, $G=C_p$ the cyclic group of order~$p$ and~$\kk$ a field of characteristic~$p$. Let $\cat{F}=\Stab(\kk G)$ be the stable module category, \ie the additive quotient of the category of $\kk G$-modules by the subcategory of projective modules.

Then every non-zero coproduct-preserving tt-functor $F\colon\cat{F}\to \cat{S}$ is faithful.
\end{Prop*}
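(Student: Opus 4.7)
My plan is to split the proof into three steps: (i) show $F$ is conservative, (ii) reduce faithfulness to injectivity on a finite list of finite-dimensional $\kk$-vector spaces, and (iii) verify that injectivity.

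For step (i), the kernel $\SET{X\in\cat{F}}{F(X)=0}$ is a localizing $\otimes$-ideal of $\cat{F}=\Stab(\kk C_p)$. Since $\Spc(\cat{F}^c)$ is a single point and the localizing $\otimes$-ideals of $\cat{F}$ are accordingly only $0$ and $\cat{F}$ (by Benson--Iyengar--Krause, and quite directly in the $C_p$ case), the assumption $F\neq0$ forces $\ker F=0$, so $F$ is conservative.

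For step (ii), I would use that $\kk C_p\cong\kk[\epsilon]/(\epsilon^p)$ has finite representation type, with non-projective indecomposables $M_i=\kk[\epsilon]/(\epsilon^i)$ for $1\leq i\leq p-1$ and $M_1=\unit$. By the Auslander/Ringel--Tachikawa theorem on pure-semisimple rings, every object of $\cat{F}$ is a coproduct of (shifts of) the $M_i$. Since the $M_i$ are compact and $F$ preserves coproducts, any morphism $f\colon X\to Y$ decomposes as a column-finite matrix of morphisms between indecomposables and $F(f)$ is computed entry-wise. Hence $F$ is faithful on $\cat{F}$ iff $F$ is injective on each $\Hom_{\cat{F}}(\Sigma^m M_i,\Sigma^n M_j)$; invoking duality ($M_i^\vee$ is a shift of $M_i$ in $\cat{F}^c$) one further reduces to the injectivity of $F$ on the graded module $A^*:=\bigoplus_{i,n}\Hom_{\cat{F}}(\unit,\Sigma^n M_i)$ over $R^*:=\End_{\cat{F}}^*(\unit)=\widehat{H}^*(C_p;\kk)$.

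Step (iii) is the main obstacle. When $p=2$ it is immediate: $R^*=\kk[t^{\pm1}]$ is a graded field, $\unit$ is the unique compact indecomposable, and any non-zero graded-ring map out of a graded field is injective. When $p$ is odd, $R^*=\kk[x^{\pm1}]\otimes\Lambda(y)$ contains the nilpotent generator $y\in R^1$ and the task is to rule out $F(y)=0$. My plan is a triangle-and-tensor chase: if $F(y)=0$ then the distinguished triangle $\unit\oto{y}\Sigma\unit\to\Sigma M_2\to\Sigma^2\unit$ splits after applying $F$, yielding $F(M_2)\simeq F(\unit)\oplus\Sigma F(\unit)$; combining this with the tensor decomposition $M_2\otimes M_2\cong M_3\oplus\unit$ in $\stab(\kk C_p)$ (respectively $M_2\otimes M_2\cong\unit$ when $p=3$, which is the cleanest case: one then gets $F(\unit)\cong F(\unit)\oplus(\Sigma F(\unit))^{\oplus 2}\oplus\Sigma^2 F(\unit)$, forcing $F(\unit)=0$ by Krull--Schmidt in $\cat{S}^c$, a contradiction). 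Iterating this with $M_3\otimes M_3$ and using the rigidity of $F(\unit)$ extracts incompatible decompositions in $\cat{S}$ from conservativity. Making this chase uniform in $p$, and lifting injectivity of $F$ on $R^*$ to injectivity on the full $R^*$-module $A^*$ over all $i$, is the technical heart of the argument.
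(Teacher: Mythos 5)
Your proposal does not close the argument: step (iii), which you yourself flag as ``the technical heart'', is exactly the point of the proposition and is left as a plan rather than a proof. The case-by-case tensor chase is also shakier than it looks. Even for $p=3$, the contradiction you extract from $F(\unit)\cong F(\unit)\oplus(\Sigma F(\unit))^{\oplus 2}\oplus\Sigma^2 F(\unit)$ invokes Krull--Schmidt in $\cat{S}^c$, but $\cat{S}$ is an arbitrary rigidly-compactly generated tt-category and its compacts need not be Krull--Schmidt; and for general $p$ the iteration through $[i]\otimes[j]$ is not carried out. Separately, reducing injectivity on all of $\bigoplus_{i,n}\Hom_{\cat{F}}(\unit,\Sigma^n M_i)$ to injectivity on $\End^*_{\cat{F}}(\unit)$ is asserted but not justified. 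Conservativity (your step (i)) is correct but is much weaker than faithfulness and does little work here.

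The missing idea that makes the statement a two-line affair is a trace argument. Each non-projective indecomposable $[i]=\kk[t]/t^i$ ($1\le i\le p-1$) has $\kk$-dimension $i$, which is \emph{invertible} in $\kk$ since $\mathrm{char}\,\kk=p$; the categorical dimension $\unit\oto{\coev}[i]\otimes[i]^\vee\oto{\ev}\unit$ is multiplication by $i$, hence an isomorphism, so $\unit$ is a direct summand of $[i]\otimes[i]^\vee$ and $[i]\otimes-$ is faithful. By your step (ii) (pure-semisimplicity), every non-zero object of $\cat{F}$ is then $\otimes$-faithful. One concludes by a general adjoint argument rather than by chasing Hom-groups: $F$ has a right adjoint $U$ satisfying the projection formula, so $UF\simeq U(\unit_{\cat{S}})\otimes-$; since $\Homcat{F}(\unit,U(\unit_{\cat{S}}))\cong\Homcat{S}(\unit_{\cat{S}},\unit_{\cat{S}})\neq0$ (as $F\neq0$ forces $\cat{S}\neq0$), the object $U(\unit_{\cat{S}})$ is non-zero, hence $UF$ is faithful, hence so is $F$. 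This replaces your steps (ii) and (iii) entirely and is uniform in $p$; it is the route the paper takes (Proposition~\ref{prop:C_p} together with Corollary~\ref{cor:faithful}).
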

This is Proposition~\ref{prop:C_p}, where further `field-like' properties of~$\Stab(\kk C_p)$ are isolated. This result tells us that one should probably accept $\cat{F}=\Stab(\kk C_p)$ as a tt-field although it is quite different from `classical' fields. Granted, for $p=2$, the above category $\cat{F}\cong \MMod\kk$ is very close to a `good old field'. However, for $p>2$, the homotopy groups in~$\cat{F}$ form the Tate cohomology ring $\pi_{-*}(\unit)=\Homcat{F}(\unit, \Sigma^{*}\unit)\cong\hat\rmH{}^{*}(C_p,\kk)$, which is the graded ring $\kk[t^{\pm1},s]/s^2$ with $t$ in degree~2 and $s$ in degree~1. In particular, $s$ is a nilpotent element in~$\pi_*$ which cannot be killed off by any non-trivial tt-functor out of~$\cat{F}$, because of the above proposition.

In other words, one should renounce some traditional definitions of fields. Topologists sometimes call fields those (nice enough\footnote{\,ring spectra, highly structured, $E_\infty$, or else. This is not the point debated here.}) rings over which every module is a sum of suspensions of the ring itself. This property still holds in the example of~$\cat{F}=\Stab(\kk C_p)$ for $p=2$ or~$p=3$. However, for every $p\ge 5$ there are objects in~$\cat{F}$ which are not direct sums of suspensions of the $\otimes$-unit~$\unit$. A generalization of the old topological definition has been used in the motivic setting in recent work of Heller and Ormsby~\cite{HellerOrmsby16pp}; they call `field' a (nice enough) ring whose modules are sums of $\otimes$-invertibles. This generalized definition is motivated by the existence of additional `spheres' in the motivic setting, namely $\mathbb{G}_{\textrm{m}}$ or~$\mathbb{P}^1$. However, for $p\ge 5$ our $\cat{F}=\Stab(\kk C_p)$ is stubbornly not a field in the Heller-Ormsby sense either, for its only $\otimes$-invertibles are the `usual' spheres~$\Sigma^*\unit$. See Proposition~\ref{prop:C_p}.

In summary, because of simple examples in modular representation theory, we cannot define tt-fields as those tt-categories whose homotopy groups are graded fields, nor as those tt-categories in which every object is a sum of spheres, or even a sum of $\otimes$-invertibles. We need something more flexible.

Dwelling on the instructive example of~$\cat{F}=\Stab(\kk C_p)$ for a moment longer, we see that every object of~$\cat{F}$ is a coproduct of finite-dimensional indecomposable objects whose dimension is invertible in~$\kk$. See Proposition~\ref{prop:C_p} again. It follows that $\cat{F}$ is a tt-field in the sense of the following tentative  generalization:
\begin{Def}
\label{def:field}%
A non-trivial `big' tt-category~$\cat{F}$ will be called a \emph{tt-field} if every object $X$ of~$\cat{F}$ is a coproduct $X\simeq \coprod_{i\in I}x_i$ of compact-rigid objects $x_i\in\cat{F}^c$ and if every non-zero $X$ is \emph{\tff}, meaning that the functor
\[
X\otimes-\colon\cat{F}\to \cat{F}
\]
is faithful (if $X\otimes f=0$ for some morphism~$f$ in~$\cat{F}$ then $f=0$).
\end{Def}

\begin{Rem}
A similar definition was suggested in~\cite[\S\,4.3]{BalmerICM} by only asking that every non-zero object of~$\cat{F}^c$ be \tff. Under the assumption that every object of~$\cat{F}$ is a coproduct of compacts, the two versions are evidently the same. We do not know any example of~$\cat{F}$ in which every non-zero (compact) object is \tff\ without having the other property. So it is possible that Definition~\ref{def:field} contains some redundancy.
\end{Rem}

We establish basic properties of such tt-fields in Section~\ref{se:field}. For instance, we show in Theorem~\ref{thm:field-ihom} that being a field is equivalent to the internal hom functor $\ihomcat{F}(-,X)$ being faithful for every non-zero object~$X$ -- a very simple formulation.

A tt-field~$\cat{F}$ must have a minimal spectrum: $\Spc(\cat{F}^c)=\{*\}$ (Proposition~\ref{prop:spcF}). In other words, every non-zero object generates the whole category. This matches the intuition that a field should be very small. It also explains how, for~$\cat{T}$ local, the expected `Nakayama property' of the residue tt-functor $F\colon\cat{T}\to \cat{F}$, namely the conservativity of~$F\colon\cat{T}^c\to\cat{F}^c$, simply means that the induced map $\Spc(F)\colon\Spc(\cat{F}^c)\to \Spc(\cat{T}^c)$ sends the unique point of~$\Spc(\cat{F}^c)$ to the closed point of~$\Spc(\cat{T}^c)$.

We are thus led to the quest for tt-residue fields.
\begin{Que}
\label{que:residue}%
Given a local tt-category~$\cat{T}$, is there a coproduct-preserving tt-functor $F\colon\cat T\to \cat{F}$ to a tt-field~$\cat{F}$ (Definition~\ref{def:field}), with~$F$ conservative on~$\cat{T}^c$\,?
\end{Que}

In this generality, Question~\ref{que:residue} remains an open problem, and most probably a difficult one. However, it is an important problem in a number of respects. For the purist this question represents a unification of our understanding of the `big' tt-world, and is part of a circle of questions concerning tt-rings and passage to closed subsets in tt-geometry. For the pragmatist this would yield powerful tools, e.g.\ nilpotence theorems and classifications, that could be used in specific examples.

When we said above that Question~\ref{que:residue} remains an open problem we meant it; it was not a rhetorical device building up to the announcement of a solution. Rather, we propose here a palliative approach via abelian categories which works unconditionally. We hope both the purist and pragmatist find this intermediary appealing. On one hand it seems to hint at new vistas in tt-geometry and works uniformly. On the other it is a sufficient framework for proving an extremely general tensor-nilpotence theorem \cite{Balmer17pp} which can be used for computations.

Before giving an overview of our approach and the contents of this article, let us add a warning and a further comment.

\begin{Rem}
\label{rem:warn}%
In topology, the Morava $K$-theories at the prime~2 do not admit a homotopy-commutative ring structure. So it is not even clear that their modules form a \emph{tensor}-triangulated category. In the same vein, in representation theory, each point of the support variety of a finite group~$G$ is detected by a `$\pi$-point', which comes with an exact functor from the stable category of~$G$ to a tt-field~$\Stab(\kk C_p)$. Again, this restriction is not always a \emph{tensor}-functor, unless one tinkers with the tensor in~$\Stab(\kk G)$. Both examples point to the possibility that one might need to adjust the role of the tensor product in the construction of tt-residue fields.
\end{Rem}

As mentioned in the foreword, it is possible that the theory of $\infty$-categories, or that of model categories, could help us solve Question~\ref{que:residue}. Mathew shows in~\cite{Mathew17a} how to produce (topological) residue fields when working over a field of characteristic zero and when~$\cat{T}$ is a stable $\infty$-category with only even homotopy groups. Even this very special case appears remarkably difficult.

\smallbreak
\begin{center}
*\ *\ *
\end{center}

Let us now say a word about the announced approach to residue fields via abelian categories.
Recall that there exists a \emph{restricted-Yoneda} functor
\begin{equation}
\label{eq:res-yoneda}%
\vcenter{
\xymatrix@C=2em@R=0em{
\yoneda\colon & \cat{T} \kern1.2em \ar[rr]^-{\displaystyle}
&& \kern2em \cat{A}:=\MT \kern.8em
\\
& X \kern1em \ar@{|->}[rr]
&& \kern1.1em \hat X := \Homcat{T}(-,X)\restr{\cat{T}^c} \kern-2em
}}
\end{equation}
from the tt-category~$\cat{T}$ to the Grothendieck category $\cat{A}=\MT$ of $\cat{T}^c$-modules, \ie contravariant additive functors from~$\cat{T}^c$ to abelian groups. The functor~$\yoneda$ preserves coproducts, is conservative and is \emph{homological}, meaning that it maps exact triangles to exact sequences. Moreover, the category~$\cat{A}$ admits a colimit-preserving tensor which makes~$\yoneda\colon\cat{T}\to \cat{A}$ into a tensor functor. See~\cite[App.\,A]{BalmerKrauseStevenson17pp} for details.

We attack the problems presented above from the angle of the module category~$\cat{A}$ when $\cat{T}$ is local. There are two related facets to this idea. First, we want to produce a `residue abelian category' $\bat{A}$ together with a coproduct-preserving homological tensor-functor~$\boneda\colon\cat{T}\to \bat{A}$ which is conservative on~$\cat{T}^c$ and in such a way that~$\bat{A}$ is `very small'. Second, in case there miraculously exists a tt-residue field $F\colon\cat{T}\to \cat{F}$ at the triangular level, we would like to relate the corresponding categories of modules, $\MT$ and~$\MF$, and the~$\bat{A}$ constructed above.

Our first series of results establishes the unconditional existence of such an~$\bat{A}$.
\begin{Thm}
\label{thm:summary}%
Let $\cat{T}$ be a `big' tt-category as in Hypothesis~\ref{hyp:big}, which we assume \emph{local} (Definition~\ref{def:local}). Then there exists a (possibly non-unique) functor
\[
\boneda\colon\cat{T}\to\bat{A}
\]
to an abelian tensor category~$\bat{A}$, satisfying all of the following properties:
\begin{enumerate}[\rm(a)]
\smallbreak
\item
The category~$\bat{A}$ is a Grothendieck category and is locally coherent (Remark~\ref{rem:loc-coh}). The tensor~$\otimes$ on~$\bat{A}$ commutes with colimits in each variable and restricts to a tensor on the subcategory~$\bat{A}\fp$ of finitely presented objects of~$\bat{A}$. In particular, its unit~$\bar\unit\in\bat{A}\fp$ is finitely presented.
\smallbreak
\item
The functor~$\boneda$ is coproduct-preserving, homological and strict monoidal. It is conservative on~$\cat{T}^c$: if $f\colon x\to y$ in~$\cat{T}^c$ is such that $\boneda(f)$ is an isomorphism then $f$ is an isomorphism; equivalently, if $\boneda(x)=0$ for $x\in\cat{T}^c$ then $x=0$.
\smallbreak
\item
The category~$\bat{A}$ is `very small' in the following sense: Every non-zero finitely presented object of~$\bat{A}$ (\eg\ $\boneda(x)$ for $x\in\cat{T}^c$) generates~$\bat{A}\fp$ as a Serre $\otimes$-ideal and generates~$\bat{A}$ as a localizing $\otimes$-ideal. The endomorphism ring of the $\otimes$-unit $\End_{\bat{A}}(\bar\unit)$ is local of Krull dimension zero, \ie its maximal ideal is a nilideal.
\smallbreak
\item The image under~$\boneda$ of every $X\in\cat{T}$ is flat
  in~$\bat{A}$. The image of every $x\in\cat{T}^c$ is
  finitely presented and rigid. Every injective object of~$\bat{A}$ is
  the image under~$\boneda$ of a pure-injective object of~$\cat{T}$,
  and in particular is flat.
%
\end{enumerate}
\end{Thm}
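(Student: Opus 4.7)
The plan is to construct $\boneda$ in two stages: first pass through the restricted Yoneda functor into $\cat{A}=\MT$, and then perform a Gabriel-type localization at a suitable maximal Serre tensor-ideal of $\cat{A}\fp$, thus shrinking the target sufficiently for~(c) while safeguarding the conservativity in~(b). The non-uniqueness of $\boneda$ asserted in the theorem will come from the Zorn choice involved in picking this maximal ideal.

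First, one exploits the fact that $\yoneda\colon\cat{T}\to\cat{A}=\MT$ from~\eqref{eq:res-yoneda} already enjoys most of the desired properties: $\cat{A}$ is Grothendieck and locally coherent, carries a colimit-preserving closed tensor product that restricts to $\cat{A}\fp\simeq\mT$ with unit $\hat\unit$, and $\yoneda$ is coproduct-preserving, homological, strict monoidal and conservative on all of~$\cat{T}$; moreover $\yoneda$ sends each $X\in\cat{T}$ to a flat module, each $x\in\cat{T}^c$ to a finitely presented rigid module, and identifies injective objects of $\cat{A}$ with pure-injective objects of~$\cat{T}$. This takes care of essentially all of~(a), most of~(b), and~(d), up to passing to the quotient.

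To secure~(c), use Zorn's lemma to pick a Serre tensor-ideal $\cat{B}\subseteq\cat{A}\fp$ maximal with respect to $\hat\unit\notin\cat{B}$, form the localizing subcategory $\Loc(\cat{B})\subseteq\cat{A}$ generated by~$\cat{B}$, and set $\bat{A}=\cat{A}/\Loc(\cat{B})$ with quotient~$Q$, so that $\boneda:=Q\circ\yoneda$ is a well-defined tensor functor. Using the Grothendieck-categorical machinery collected in Appendix~\ref{app:Grothendieck}, $\bat{A}$ is Grothendieck and locally coherent with $\bat{A}\fp\simeq\cat{A}\fp/\cat{B}$, the tensor descends since $\cat{B}$ is tensor-closed, and $\boneda$ remains coproduct-preserving, homological and strict monoidal. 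Property~(c) then follows from maximality: any non-zero $M\in\bat{A}\fp$ lifts to $\tilde M\in\cat{A}\fp\setminus\cat{B}$, the Serre tensor-ideal generated by $\cat{B}$ and $\tilde M$ strictly contains $\cat{B}$ and therefore must contain $\hat\unit$, so $M$ generates $\bat{A}\fp$ as a Serre tensor-ideal; this absence of non-trivial Serre tensor-ideals in $\bat{A}\fp$ forces $\End_{\bat{A}}(\bar\unit)$ to be local of Krull dimension zero. For~(d), flatness survives any exact coproduct-preserving localization, rigidity of $\boneda(x)$ is inherited from $\yoneda(x)$ via strict monoidality, and injectives of $\bat{A}$ lift canonically to injectives of $\cat{A}$ and thus to pure-injective objects of~$\cat{T}$.

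The main obstacle is the outstanding part of~(b): ensuring that $\hat x\notin\cat{B}$ for \emph{every} non-zero $x\in\cat{T}^c$, not merely for $x=\unit$. This is not automatic from maximality with respect to $\hat\unit\notin\cat{B}$ alone, since a priori some $\hat x$ with $x\neq 0$ could sit in such a maximal ideal without forcing $\hat\unit$ along with it. Reconciling the two constraints is precisely where the hypothesis that $\cat{T}$ is \emph{local} must enter, presumably by refining the Zorn argument to a restricted class of Serre tensor-ideals and exploiting a residue-field-type analysis of $\End_{\cat{T}}(\unit)$ and the nilpotence structure available in $\cat{A}\fp$; I would expect this to be the technical heart of Sections~\ref{se:E} and~\ref{se:max}.
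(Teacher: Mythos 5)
Your two-stage architecture (restricted Yoneda into $\cat{A}=\MT$, then Gabriel quotient at a localizing subcategory generated by a maximal Serre $\otimes$-ideal $\cat{B}\subseteq\cat{A}\fp$) is exactly the paper's, and your treatment of (a), (d) and the first half of (b) is correct. The genuine gap is the one you flag yourself at the end: with $\cat{B}$ chosen maximal only with respect to $\hat\unit\notin\cat{B}$, nothing prevents some $\hat c$ with $0\neq c\in\cat{T}^c$ from lying in $\cat{B}$ (the ideal generated by such a $\hat c$ contains the \emph{image} of $\widehat{\coev_c}\colon\hat\unit\to\widehat{c^\vee\otimes c}$, but not obviously $\hat\unit$ itself), so conservativity of $\boneda$ on $\cat{T}^c$ is not established. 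Your guess that one must ``refine the Zorn argument'' overcomplicates the fix: the paper simply runs Zorn against the entire class $\SET{\hat c}{c\in\cat{T}^c,\ c\neq 0}$ from the outset (Proposition~\ref{prop:max}, Definition~\ref{def:max}); a union of a tower of Serre $\otimes$-ideals avoiding a given class still avoids it, so existence is immediate and requires no locality at all.

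With that corrected definition, however, the burden shifts to your step (c), which you currently get ``for free'' from your weaker maximality but which now requires work: one must show that an ideal maximal merely among those avoiding all nonzero $\hat c$ is in fact maximal among \emph{all} proper Serre $\otimes$-ideals. This is where locality of $\cat{T}$ actually enters, via Proposition~\ref{prop:max-saturated} (the saturation $\SET{M}{M\otimes\hat d\in\cat{B}\text{ for some }d\neq 0}$ still avoids the nonzero $\hat c$ because $c\otimes d\neq0$) and Lemma~\ref{lem:max-rigid}, yielding Proposition~\ref{prop:all-generate} and Corollary~\ref{cor:all-generate}. Finally, your assertion that the absence of nontrivial Serre $\otimes$-ideals in $\bat{A}\fp$ ``forces'' $\End_{\bat{A}}(\bar\unit)$ to be local with nil maximal ideal elides a real ingredient: one needs Lemma~\ref{lem:nil}, i.e.\ that $\Nil(f)$ is a Serre $\otimes$-ideal (closure under extensions is a nontrivial diagram chase), before applying the maximality to $\Ker(f)$ and $\coker(f)$ as in Theorem~\ref{thm:end-nil}.
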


\begin{proof}
  This result will occupy most of
  Sections~\ref{se:MT}-\ref{se:max}. The category $\bat{A}$ is
  constructed as the Gabriel quotient of the module
  category~$\cat{A}=\MT$ by a localizing subcategory that is
  generated by a maximal Serre $\otimes$-ideal
  subcategory~$\cat{B}\subset\cat{A}\fp$ of finitely presented
  objects, which meets $\yoneda(\cat{T}^c)$ trivially, see
  Proposition~\ref{prop:max}. The quotient is recalled in
  Proposition~\ref{prop:A/B}, from which part~(a) follows. Then,
  specifically: Part~(b) is Proposition~\ref{prop:all-generate}\,(a);
  Part~(c) is Corollary~\ref{cor:all-generate} and
  Theorem~\ref{thm:end-nil}; Part~(d) is essentially Corollary~\ref{cor:A/B} which proves the flatness statements and that injectives in $\bat{A}$ come from $\cat{T}$. It remains to show that objects of $\cat{T}^c$ are sent to finitely presented rigid objects. The quotient functor is monoidal by Corollary~\ref{cor:A/B} which gives rigidity and preservation of being finitely presented is Proposition~\ref{prop:A/B}.
\end{proof}

We produce the desired $\bat{A}$ by cooking up a suitable ideal in $\cat{A}$ to kill. We moreover show, in Corollary~\ref{cor:unique}, that if the localizing $\otimes$-ideal of $\cat{T}$ supported at the closed point of $\Spc(\cat{T}^c)$ is minimal, and $\cat{T}$ satisfies some further technical conditions, then the ideal we construct is \emph{unique}. This suggests that one might attempt to use the collection of such ideals as a refinement of $\Spc(\cat{T}^c)$ in order to understand localizations of $\cat{T}$.

Our second collection of results can be summarized as follows: if $\cat{T}$ admits a tt-residue field then it produces, in a natural way, a `residue abelian category' as above and this abelian category is very close to the category of modules over the tt-field.

\begin{Thm}
\label{thm:residue}%
Let $\cat{T}$ be a local `big' tt-category and $F\colon\cat{T}\to\cat{F}$ be a coproduct-preserving tt-functor into a tt-field~$\cat{F}$ (Definition~\ref{def:field}). Suppose that $F$ is conservative on~$\cat{T}^c$ and surjective up to direct summands. (See Hypothesis~\ref{hyp:res}.) Then there exists a ring-object~$\bbE$ in~$\cat{T}$ satisfying all the following properties:
\begin{enumerate}[\rm(a)]
\smallbreak
\item
The object~$\bbE$ is pure-injective, hence $\hat{\bbE}=\yoneda(\bbE)$ is injective in~$\cat{A}$.
\smallbreak
\item
The idempotent-completion of the Kleisli category of free~$\bbE$-modules in~$\cat{T}$ is equivalent to~$\cat{F}$.
\smallbreak
\item
The category of $\hat{\bbE}$-modules in~$\cat{A}=\MT$ is equivalent to the category $\MF$.
\smallbreak
\item
The Serre $\otimes$-ideal~$\cat{A}\fp\cap \Ker(\hat{\bbE}\otimes-)$ of finitely presented objects annihilated by~$\hat{\bbE}$ is maximal among those Serre $\otimes$-ideals in~$\cat{A}\fp$ which meet $\yoneda(\cat{T}^c)$ trivially. The quotient $\bat{A}=\cat{A}/\Ker(\hat{E}\otimes-)$ and the functor~$\boneda\colon\cat{T}\to\cat{A}\onto \bat{A}$ satisfy all the properties of $\bat{A}$ and~$\boneda$ listed in Theorem~\ref{thm:summary}.
\end{enumerate}
\end{Thm}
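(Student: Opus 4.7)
The natural candidate is to take $\bbE := F^*(\unit_{\cat{F}})$, where $F^*\colon \cat{F}\to\cat{T}$ is the right adjoint to $F$ furnished by Brown representability (available since $F$ is coproduct-preserving between rigidly-compactly generated categories). The adjunction $F\adj F^*$ determines a monad $F^*F$ on~$\cat{T}$, and since $F$ is strong monoidal, the projection formula $F^*(\unit_{\cat{F}})\otimes X\simeq F^*F(X)$ identifies this monad with~$\bbE\otimes-$. Transferring the monad structure endows $\bbE$ with a ring-object structure in~$\cat{T}$: the unit $\unit_{\cat{T}}\to\bbE$ comes from the adjunction unit together with the isomorphism $F(\unit_{\cat{T}})\simeq\unit_{\cat{F}}$, and the multiplication $\bbE\otimes\bbE\simeq F^*F(\bbE)\to\bbE$ comes from the adjunction counit.

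For part~(b), the Kleisli category of free $\bbE$-modules has morphism sets
\[
\Homcat{T}(X,\bbE\otimes Y)\cong\Homcat{T}(X,F^*F(Y))\cong\Homcat{F}(F(X),F(Y)),
\]
so it is naturally equivalent to the essential image of~$F$ viewed as a full subcategory of~$\cat{F}$. Passing to the idempotent completion and invoking the hypothesis that $F$ is surjective up to direct summands, together with the tt-field property (Definition~\ref{def:field}) that every object of~$\cat{F}$ is a coproduct of compacts (so coproducts of images realise all of~$\cat{F}$ up to summands), identifies this completion with~$\cat{F}$ itself. Part~(c) is the abelian analogue: since~$\yoneda$ is strong monoidal, $\hat\bbE=\yoneda(\bbE)$ is a ring object in~$\cat{A}$, and the monad $\hat\bbE\otimes-$ on~$\cat{A}$ realises the image of~$F^*F$ under~$\yoneda$. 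A Beck-comonadicity argument, using the conservativity of~$F$ on~$\cat{T}^c$ and the exactness of restriction $(F^c)^*\colon\MF\to\cat{A}$ along~$F^c\colon\cat{T}^c\to\cat{F}^c$, then identifies $\hat\bbE$-modules in~$\cat{A}$ with~$\MF$.

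Part~(a) is the technical crux. The right adjoint of a coproduct-preserving tt-functor preserves pure-injectivity: pure monos in~$\cat{T}$ are sent to pure monos in~$\cat{F}$ (since $F$ preserves filtered homotopy colimits), and a Hom-adjunction argument then transfers the lifting property. So it suffices to show that $\unit_{\cat{F}}$ is pure-injective in the tt-field~$\cat{F}$, equivalently that $\hat\unit_{\cat{F}}=\Homcat{F}(-,\unit_{\cat{F}})\restr{\cat{F}^c}$ is injective in~$\MF$. I expect this to be the main obstacle: it will require combining the \tff-ness of non-zero objects with the $\ihom$-faithfulness characterisation of tt-fields (Theorem~\ref{thm:field-ihom}) to force the vanishing of~$\Ext^1_{\MF}(-,\hat\unit_{\cat{F}})$ on all finitely presented test objects, leveraging both the compact-rigidity of~$\unit_{\cat{F}}$ and the coproduct-of-compacts structure of~$\cat{F}$.

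For part~(d), the equivalence of~(c) identifies $\cat{A}\fp\cap\Ker(\hat\bbE\otimes-)$ with the Serre $\otimes$-ideal of~$\cat{A}\fp$ consisting of those finitely presented modules annihilated by restriction along~$F^c$. Conservativity of~$F$ on~$\cat{T}^c$ ensures this ideal meets~$\yoneda(\cat{T}^c)$ only in zero. Maximality among such ideals follows because any strictly larger Serre $\otimes$-ideal would produce a proper non-zero Gabriel quotient of~$\MF$, contradicting the tt-field property of~$\cat{F}$ (every non-zero object generates the whole category, cf.~Proposition~\ref{prop:spcF}). Once maximality is established, the remaining properties of~$\bat{A}=\cat{A}/\Ker(\hat\bbE\otimes-)$ and of~$\boneda\colon\cat{T}\to\bat{A}$ demanded by Theorem~\ref{thm:summary} transfer directly through the identification with~$\MF$, using that the tt-field structure of~$\cat{F}$ provides the relevant local-zero-Krull-dimension and rigidity/flatness statements.
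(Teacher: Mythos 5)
Your overall strategy is the paper's own: take $\bbE=U(\unit_{\cat{F}})$ for $U$ the Brown--Neeman right adjoint, get the ring structure from lax-monoidality/the projection formula, prove (b) via the Kleisli comparison functor plus Condition~\eqref{it:res3}, prove (c) by Beck monadicity for the restriction functor $\hat U=(F^c)^*$, and prove (d) by showing a strictly larger Serre $\otimes$-ideal would have non-zero image in $\mF$ and hence generate everything. Two corrections on the details: the faithfulness feeding into Beck monadicity comes from surjectivity up to summands (the counit splits, so $U$ and hence $\hat U$ are faithful, Lemma~\ref{lem:U-faithful}), not from conservativity of $F$ on $\cat{T}^c$; and in (d) the passage from ``$\hat F(M)$ generates $\mF$'' back to ``$\hat\unit_{\cat{T}}\in\cat{B}'$'' is not automatic --- it is the real content of Proposition~\ref{prop:B-max}, which applies $\hat U$, uses the projection formula to land the generators inside $\overrightarrow{\cat{B}'}$, and handles the kernel of $\hat\eta\colon\hat\unit_{\cat{T}}\to\hat\bbE$ separately. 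Your one-line ``contradicts the tt-field property'' elides this.

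The genuine gap is part~(a). You correctly reduce to showing that $\unit_{\cat{F}}$ is pure-injective and that $U$ preserves pure-injectivity, but you then declare the first reduction ``the main obstacle'' and propose an $\Ext^1$-vanishing campaign using \tff{}ness and Theorem~\ref{thm:field-ihom}. No such campaign is needed: by Definition~\ref{def:field} every object of a tt-field is a coproduct of compacts, so by Theorem~\ref{thm:semisimple} (Beligiannis--Krause) the category $\cat{F}$ is pure-semisimple, i.e.\ \emph{every} object of $\cat{F}$ --- in particular $\unit_{\cat{F}}$ --- is pure-injective. That $U$ preserves pure-injectives is then immediate because $\hat F$ is exact, so its right adjoint $\hat U$ preserves injectives, and $\hat U(\hat Y)\cong\widehat{U(Y)}$; this is cleaner than your argument via preservation of pure monomorphisms. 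With that observation (a) is a two-line corollary (the paper's Corollary~\ref{cor:E-inj}), and the rest of your outline goes through.
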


\begin{proof}
Again, this is a summary of Section~\ref{se:fairyland}. See specifically Corollaries~\ref{cor:E} and~\ref{cor:E-inj}, Propositions~\ref{prop:hatUmonadic}, \ref{prop:KerF} and~\ref{prop:B-max}.
\end{proof}

\goodbreak
\section{Modules, Serre ideals and quotients}
\label{se:MT}%
\medbreak

Recall the Grothendieck category~$\cat{A}=\MT$ of Section~\ref{se:intro}.
\begin{Not}
The finitely presented objects $\cat{A}\fp$ are denoted by~$\mT$. The latter is the \emph{Freyd envelope of~$\cat{T}^c$} \cite[Chap.\,5]{Neeman01} and the (usual) Yoneda embedding $\yoneda\colon\cat{T}^c\hook \mT$, $x\mapsto \hat x=\Homcat{T^c}(-,x)$, identifies~$\cat{T}^c$ with the projective (and injective) objects in~$\mT$. Together with restricted-Yoneda of~\eqref{eq:res-yoneda}, we have the following commutative diagram
\[
\xymatrix{
\cat{T} \ar[r]^-{\yoneda}
& \MT = \cat{A} \kern-2em
\\
\cat{T}^c \topspace\  \ar@{^(->}[r]^-{\yoneda} \ar@{^(->}[u]
& \mT = \cat{A}\fp \,. \kern-3em \topspace \ar@{^(->}[u]
}
\]
The objects $\hat x$ remain projective but usually not injective in the whole module category~$\MT$. See Remark~\ref{rem:pure-inj} for injectives in~$\cat{A}$.
\end{Not}

\begin{Rem}
\label{rem:Sigma}%
The category $\cat{A}=\MT$ inherits a suspension
$\Sigma\colon\cat{A}\isoto \cat{A}$ from~$\cat{T}^c$ such that
$\Sigma\circ \yoneda = \yoneda \circ \Sigma$. On modules it
is~$\Sigma M=M\circ\Sigma\inv\colon(\cat{T}^c)\op\to\Ab$.
\end{Rem}

\begin{Rem}
\label{rem:hatG}%
The restricted-Yoneda functor $h\colon\cat{T}\to \MT$ is the universal co\-pro\-duct-preserving homological functor out of~$\cat{T}$. See~\cite[Cor.\,2.4]{Krause00}. In other words, for any coproduct-preserving homological functor $G\colon\cat T\to \cat{C}$ to a Grothendieck category~$\cat{C}$ there exists a unique \emph{exact} colimit-preserving functor $\hat G\colon\MT\to \cat{C}$ making the following diagram commute:
\[
\xymatrix{
\cat T \ar[r]^-{\yoneda} \ar[rd]_-{G}
& \MT \ar[d]^-{\hat G}
\\
& \cat{C}\,.
}
\]
\end{Rem}

\begin{Rem}
\label{rem:Day-1}%
The Day convolution product on~$\MT$ is the unique symmetric monoidal $\otimes\colon\MT\times\MT\to \MT$ such that $M\otimes-$ and $-\otimes N$ commute with colimits and such that $\hat x\otimes \hat y=\widehat{x\otimes y}$ for every $x,y\in\cat{T}^c$. Then restricted-Yoneda $h\colon\cat{T}\to \MT$ is a monoidal functor and $\hat X$ is flat in~$\MT$ for every~$X\in \cat{T}$, even non-compact (see~\cite[Prop.\,A.14]{BalmerKrauseStevenson17pp}). Also, $\hat x$ is rigid when~$x\in\cat{T}^c$, as every monoidal functor preserves rigid objects.

In fact the category~$\cat{A}=\MT$ is moreover closed, i.e.\ it admits an internal hom which we denote by $\ihomcat{A}$. This follows from a general fact about existence of adjoints in Grothendieck categories (Proposition~\ref{prop:closed}) or can be seen by another Day convolution argument. This internal hom functor~$\ihomcat{A}$ is characterized by the fact that $\ihomcat{A}(-,M)$ sends colimits to limits and the fact that $\ihomcat{A}(\hat x,-)=\widehat{\ihomcat{T}(x,-)}$ for every $x\in\cat{T}^c$, in the sense of Remark~\ref{rem:hatG}. Note that $\yoneda\colon\cat{T}\to \cat{A}$ needs not be a closed functor, outside of~$\cat{T}^c$.
\end{Rem}

\begin{Lem}\label{lem:compacttensor}
Let $x$ be an object of $\cat{T}^c$, with dual~$x^\vee=\ihom(x,\unit)$, and $M$ an object of~$\cat{A}=\MT$. Then there is a natural isomorphism of functors $(\cat{T}^c)\op\to \Ab$:
\begin{displaymath}
\hat{x}\otimes M \cong M(x^\vee \otimes -).
\end{displaymath}
In particular, we have a natural isomorphism $\widehat{\Sigma\unit} \otimes M \cong \Sigma M$ in~$\cat{A}$.
\end{Lem}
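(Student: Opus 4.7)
My plan is to verify the isomorphism by reducing to representables, exploiting that both sides are colimit-preserving functors of $M$ and that every object of $\cat{A}=\MT$ is canonically a colimit of representables.

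Concretely, fix $x\in\cat{T}^c$ and consider the two endofunctors of~$\cat{A}$ sending $M$ to $\hat{x}\otimes M$ and to $M(x^\vee\otimes-)$ respectively. The first preserves colimits by the defining property of the Day convolution (Remark~\ref{rem:Day-1}). The second also preserves colimits, since colimits in the functor category $\MT$ are computed pointwise: if $M=\colim_i M_i$ then $M(x^\vee\otimes y)=\colim_i M_i(x^\vee\otimes y)$ for every $y\in\cat{T}^c$. Thus both sides define colimit-preserving endofunctors of~$\cat{A}$.

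Next I would check the isomorphism on representables. For $y\in\cat{T}^c$ we have $\hat{x}\otimes\hat{y}=\widehat{x\otimes y}$ by the defining property of Day convolution. On the other hand, using that $x$ is rigid with dual $x^\vee$ (so that $-\otimes x^\vee\adj-\otimes x$ in~$\cat{T}^c$), for any $z\in\cat{T}^c$:
\[
\hat{y}(x^\vee\otimes z)=\Homcat{T^c}(x^\vee\otimes z,\,y)\cong\Homcat{T^c}(z,\,x\otimes y)=\widehat{x\otimes y}(z).
\]
Hence $\hat{y}(x^\vee\otimes-)\cong\widehat{x\otimes y}\cong\hat{x}\otimes\hat{y}$, naturally in $y$. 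Since every $M\in\cat{A}$ is a colimit of representables $\hat{y}$ (as $\cat{A}$ is a presheaf category on~$\cat{T}^c$), writing $M=\colim_i\hat{y_i}$ and applying both colimit-preserving functors yields the desired natural isomorphism $\hat{x}\otimes M\cong M(x^\vee\otimes-)$.

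For the final assertion, specialize to $x=\Sigma\unit$, whose dual is $(\Sigma\unit)^\vee=\Sigma^{-1}\unit$. Then $\widehat{\Sigma\unit}\otimes M\cong M(\Sigma^{-1}\unit\otimes-)=M(\Sigma^{-1}(-))=\Sigma M$, where the last equality is the description of~$\Sigma$ on $\cat{A}$ recalled in Remark~\ref{rem:Sigma}. There is no serious obstacle here; the only point requiring care is the bookkeeping of the rigidity adjunction in the form $\Homcat{T^c}(x^\vee\otimes z,y)\cong\Homcat{T^c}(z,x\otimes y)$, and verifying that the resulting isomorphisms are natural in all the relevant variables so that the argument upgrades from the pointwise identification to an isomorphism of modules.
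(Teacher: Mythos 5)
Your proof is correct and follows essentially the same route as the paper: check the isomorphism on representables via the Day convolution identity $\hat{x}\otimes\hat{y}\cong\widehat{x\otimes y}$ together with the rigidity adjunction $\Homcat{T^c}(x^\vee\otimes -,y)\cong\Homcat{T^c}(-,x\otimes y)$, then extend to all of $\cat{A}$ by colimit preservation. The paper compresses the colimit step into ``an immediate consequence of the description of the Day convolution product,'' whereas you spell it out; the mathematical content is identical.
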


\begin{proof}
This is an immediate consequence of the description of the Day convolution product on $\MT$; the key observation is that it holds for representable functors $
\hat{x}\otimes \hat{y} \cong \widehat{x\otimes y} \cong \Homcat{K}(-,x\otimes y) \cong \Homcat{K}(x^\vee\otimes -, y)$. In particular, we have
\[
\widehat{\Sigma\unit} \otimes M \cong M((\Sigma\unit)^\vee \otimes -) \cong M(\Sigma\inv\unit \otimes -) \cong M\circ \Sigma\inv= \Sigma M \]
for every for $M\in\cat{A}$.
\end{proof}

\begin{Rem}
\label{rem:pure-inj}%
Although restricted-Yoneda $\yoneda\colon\cat{T}\to \cat{A}=\MT$ is in
general neither full nor faithful outside of~$\cat{T}^c$, the functor
$\yoneda$ restricts to an equivalence between the
pure-injective objects~$Y\in\cat{T}$ and the injective objects
of~$\cat{A}$. See~\cite[Cor.~1.9]{Krause00}. By definition, $Y$ is
\emph{pure-injective} if every pure mono $Y\to Z$ in $\cat{T}$ splits,
and  a morphism $Y\to Z$ is a \emph{pure mono} if the induced morphism
$\hat Y\to \hat Z$ is a monomorphism.
The interesting point is what happens on morphisms. We
even have slightly more than full-faithfulness. Indeed, the functor
$\yoneda$ induces an isomorphism
\begin{equation}
\label{eq:pure-h-ff}%
\Homcat{T}(X,Y)\isoto\Homcat{A}(\hat X,\hat Y)
\end{equation}
for $X\in\cat{T}$ arbitrary and $Y\in\cat{T}$ pure-injective. In particular, if $f\colon X\to Y$ is a \emph{phantom}, \ie $\hat f=0$ in~$\cat{A}$, with pure-injective target~$Y$ then $f=0$.
\end{Rem}

Injective objects in $\MT$ are also injective with respect to the internal hom:

\begin{Lem}\label{lem:ihominj}
Let $J$ be an injective object of~$\cat{A}=\MT$. Then the functor $\ihomcat{A}(-, J)\colon\cat{A}\op\to \cat{A}$ is exact.
\end{Lem}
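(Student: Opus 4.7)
The plan is to use the tensor--hom adjunction to reduce exactness of $\ihomcat{A}(-,J)$ to injectivity of $J$ plus flatness of a set of generators.

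First, observe that $\ihomcat{A}(-,J)\colon \cat{A}\op \to \cat{A}$ is automatically left exact: for all $M,N \in \cat{A}$ we have
\[
\Homcat{A}(M, \ihomcat{A}(N,J)) \cong \Homcat{A}(M \otimes N, J) \cong \Homcat{A}(N, \ihomcat{A}(M,J))
\]
by symmetry of $\otimes$, so $\ihomcat{A}(-,J)$ is a contravariant right adjoint of itself and sends colimits in $\cat{A}$ to limits in $\cat{A}$. In particular it sends cokernels to kernels.

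It therefore remains to prove right exactness, i.e.\ that $\ihomcat{A}(-,J)$ sends monomorphisms in $\cat{A}$ to epimorphisms. Recall that the objects $\{\hat x\}_{x \in \cat{T}^c}$ form a set of compact projective generators of $\cat{A} = \MT$, so a morphism $g\colon P \to Q$ in~$\cat{A}$ is an epimorphism if and only if $\Homcat{A}(\hat x, g)$ is surjective for every $x \in \cat{T}^c$ (projectivity of $\hat x$ makes $\Homcat{A}(\hat x, \coker g) = \coker \Homcat{A}(\hat x, g)$, and the $\hat x$ jointly detect vanishing).

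Now let $f\colon X \into Y$ be a monomorphism in $\cat{A}$ and fix $x \in \cat{T}^c$. By the adjunction above,
\[
\Homcat{A}\bigl(\hat x, \ihomcat{A}(f, J)\bigr) \;\cong\; \Homcat{A}\bigl(\hat x \otimes f,\, J\bigr).
\]
By Remark~\ref{rem:Day-1}, $\hat x$ is flat in~$\cat{A}$, so $\hat x \otimes f\colon \hat x \otimes X \into \hat x \otimes Y$ is still a monomorphism. Since $J$ is injective, $\Homcat{A}(-, J)$ turns this monomorphism into a surjection. Hence $\Homcat{A}(\hat x, \ihomcat{A}(f,J))$ is surjective for every generator~$\hat x$, and therefore $\ihomcat{A}(f,J)$ is epi, as required.

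There is no real obstacle here: the whole argument is a routine tensor--hom manipulation. The only point that deserves care is the identification of a generating family of flat objects, which is provided by the projective generators~$\hat x$ together with the flatness statement already recorded in Remark~\ref{rem:Day-1}.
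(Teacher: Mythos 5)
Your proof is correct and follows essentially the same route as the paper: both rest on the adjunction isomorphism $\Homcat{A}(\hat x,\ihomcat{A}(-,J))\cong\Homcat{A}(\hat x\otimes-,J)$ together with flatness of the $\hat x$ and injectivity of $J$. The paper simply observes at the outset that the family $\{\Homcat{A}(\hat c,-)\}_{c\in\cat{T}^c}$ detects exactness (since $\Homcat{A}(\hat c,M)\cong M(c)$), which handles left and right exactness in one stroke rather than via your separate self-adjointness argument for left exactness.
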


\begin{proof}
For every $M\in\cat{A}$ and $c\in\cat{T}^c$, we have by Yoneda that $\Homcat{A}(\hat{c},M)\cong M(c)$. Hence the collection of functors $\{\Homcat{A}(\hat{c},-)\}_{c\in\cat{T}^c}$ detects exactness. On the other hand, $\Homcat{A}(\hat{c},\ihomcat{A}(-,J))\cong\Homcat{A}(\hat{c}\otimes-,J)$ and the functors $\hat{c}\otimes-\colon\cat{A}\to \cat{A}$ and $\Homcat{A}(-,J)\colon\cat{A}\op\to \Ab$ are exact by flatness of~$\hat{c}$ and injectivity of~$J$.
\end{proof}

\begin{Rem}
The content of the first part of the proof of the lemma is that the $\hat{c}$ for $c\in \cat{T}^c$ are, up to choosing a skeleton, a set of (finitely presented) projective generators for $\cat{A}$.
\end{Rem}

\begin{Prop}
\label{prop:phantom}%
Let $f\in\cat{T}$ be a phantom, \ie $\hat f=0$ in~$\cat{A}$.
\begin{enumerate}[\rm(a)]
\item
\label{it:phantom-a}%
For every~$X\in \cat{T}$, the morphism $X\otimes f$ is a phantom.
\item
\label{it:phantom-b}%
For every~$Y\in \cat{T}$ pure-injective, the morphism $\ihom(f,Y)$ is zero in~$\cat{T}$.
\item
\label{it:phantom-c}%
Let $Y$ be pure-injective and $X$ arbitrary. Then $\ihom(X,Y)$ is pure-injective.
\end{enumerate}
\end{Prop}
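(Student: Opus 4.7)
For part~(a), the plan is to invoke the monoidality of restricted-Yoneda stated in Remark~\ref{rem:Day-1}: there is a natural isomorphism $\widehat{X\otimes Y}\cong \hat X\otimes \hat Y$, natural in both variables. Applied to $f\colon Y\to Y'$, this identifies $\widehat{X\otimes f}$ with $\hat X\otimes \hat f$, which is zero since $\hat f=0$. Hence $X\otimes f$ is a phantom.

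For part~(b), I would reduce to~(a) via the $(-\otimes W,\,\ihom(W,-))$ adjunction. Writing $f\colon Z\to W$, checking $\ihom(f,Y)=0$ amounts to showing that every composite $A\to \ihom(W,Y)\oto{\ihom(f,Y)} \ihom(Z,Y)$ vanishes. Under the adjunction such a composite corresponds to $g\circ (A\otimes f)\colon A\otimes Z\to Y$ for some $g\colon A\otimes W\to Y$. By~(a), $A\otimes f$ is a phantom; composing with~$g$ preserves being phantom since $\widehat{g\circ (A\otimes f)}=\hat g\circ \widehat{A\otimes f}=0$; and a phantom with pure-injective target is zero by the final sentence of Remark~\ref{rem:pure-inj}.

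For part~(c), the strategy is to recognize $\widehat{\ihom(X,Y)}$ as an internal hom in~$\cat A$ and then apply standard homological algebra. Evaluating at $c\in\cat T^c$ and using, in turn, rigidity of~$c$, the identification~\eqref{eq:pure-h-ff} (available because $Y$ is pure-injective), monoidality of $\yoneda$ and the tensor-hom adjunction in~$\cat A$, one obtains
\[
\widehat{\ihom(X,Y)}(c)=\Homcat{T}(c\otimes X,Y)\cong \Homcat{A}(\hat c\otimes \hat X,\hat Y)\cong \ihomcat{A}(\hat X,\hat Y)(c).
\]
After promoting this to a natural isomorphism $\widehat{\ihom(X,Y)}\cong \ihomcat{A}(\hat X,\hat Y)$ in~$\cat A$, it remains to observe that $\hat X$ is flat (Remark~\ref{rem:Day-1}) and $\hat Y$ is injective (Remark~\ref{rem:pure-inj}), so the functor $\Homcat{A}(-,\ihomcat{A}(\hat X,\hat Y))\cong \Homcat{A}(-\otimes \hat X,\hat Y)$ is exact, showing $\ihomcat{A}(\hat X,\hat Y)$ is injective (as in the proof of Lemma~\ref{lem:ihominj}). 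Translating back through Remark~\ref{rem:pure-inj} yields that $\ihom(X,Y)$ is pure-injective.

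The step I expect to require the most care is establishing the naturality in~$X$ of the isomorphism $\widehat{\ihom(X,Y)}\cong \ihomcat{A}(\hat X,\hat Y)$ used in~(c): since $\yoneda$ is not closed outside~$\cat T^c$, one cannot derive it from a universal property, and one must assemble it from the pointwise identifications above while keeping track of compatibility with morphisms in~$X$.
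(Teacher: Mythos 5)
Your parts (a) and (b) are essentially the paper's own arguments: (a) is exactly the appeal to monoidality of restricted-Yoneda, and your (b) is the same reduction the paper makes, namely $\Homcat{T}(X,\ihom(f,Y))\cong\Homcat{T}(X\otimes f,Y)$ followed by the observation that $X\otimes f$ is phantom and phantoms into pure-injectives vanish (via~\eqref{eq:pure-h-ff}); you merely phrase the adjunction elementwise rather than as an isomorphism of $\Hom$-functors.

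Part (c) is correct but takes a genuinely different route. The paper stays inside $\cat{T}$: it observes that $X\otimes-$ is pure-exact (it preserves triangles with phantom third map, by (a)), so $\Homcat{T}(-,\ihom(X,Y))\cong\Homcat{T}(-,Y)\circ(X\otimes-)$ is a composite of pure-exact functors, which characterizes pure-injectivity of $\ihom(X,Y)$. You instead pass to $\cat{A}=\MT$ and identify $\widehat{\ihom(X,Y)}\cong\ihomcat{A}(\hat X,\hat Y)$, then deduce injectivity from flatness of $\hat X$ and injectivity of $\hat Y$ exactly as in Lemma~\ref{lem:ihominj}. This buys you a slightly stronger statement (the explicit compatibility of $\yoneda$ with internal homs when the target is pure-injective, which the paper only records for compact sources in Remark~\ref{rem:Day-1}), at the cost of having to invoke the full strength of the equivalence of [Krause00, Cor.~1.9] in the direction ``$\hat Z$ injective $\Rightarrow$ $Z$ pure-injective''; the paper's route uses only the pure-exactness characterization. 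One small simplification for you: the naturality you flag as delicate is naturality in $c\in\cat{T}^c$, not in $X$ --- to conclude that $\widehat{\ihom(X,Y)}$ is an injective \emph{object} of $\cat{A}$ you only need the pointwise isomorphisms to assemble into an isomorphism of functors on $(\cat{T}^c)\op$, and every step in your chain (the $\otimes\adj\ihom$ adjunction in $\cat{T}$, the isomorphism~\eqref{eq:pure-h-ff}, monoidality of $\yoneda$ on compacts, and the adjunction in $\cat{A}$) is already natural in $c$; naturality in $X$ plays no role in the argument.
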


\begin{proof}
By Remark~\ref{rem:Day-1}, we have $\widehat{X\otimes f}=\hat X\otimes \hat f=0$, hence~\eqref{it:phantom-a}. This implies that $X\otimes-$ preserves the class of exact triangles whose third map is a phantom. In words, this means that every $X\otimes-$ is pure-exact. Consequently, the functor
\[
\Homcat{T}(-,\ihom(X,Y))\cong \Homcat{T}(X\otimes-,Y)\cong \Homcat{T}(-,Y)\circ (X\otimes-)
\]
is the composite of two pure-exact functors if~$Y$ is pure-injective. This gives~\eqref{it:phantom-c}. Finally, to check~\eqref{it:phantom-b}, for every $X\in\cat{T}$, we have
\[
\Homcat{T}(X,\ihom(f,Y))\simeq\Homcat{T}(X\otimes f,Y)\simeq\Homcat{A}(\widehat{X\otimes f},\hat Y)=0
\]
where the second isomorphism holds by~\eqref{eq:pure-h-ff} and the vanishing by~\eqref{it:phantom-a}. This shows that $\ihom(f,Y)=0$ by Yoneda.
\end{proof}

\begin{center}
*\ *\ *
\end{center}

We are interested in the Serre subcategories~$\cat{B}$ of~$\mT$ and~$\MT$. We focus on the $\otimes$-ideals meaning of course $M\otimes\cat{B}\subseteq\cat{B}$ for every~$M$ in the ambient category.

\begin{Conv}
\label{conv:Sigma}%
All Serre subcategories of~$\MT$ that we consider are assumed stable under suspension (Remark~\ref{rem:Sigma}). For  $\otimes$-ideals it follows from Lemma~\ref{lem:compacttensor} that we can safely omit this condition and we shall do so from now on.
\end{Conv}

The $\otimes$-ideal condition can be tested just using the finitely presented projectives:

\begin{Lem}
\label{lem:id-Tc}%
Let $\cat{B}$ be a Serre subcategory of $\mT$ which is closed under tensoring with finitely presented representable functors, i.e.\ closed under the action of $\cat{T}^c$ under the Yoneda embedding. Then $\cat{B}$ is a Serre $\otimes$-ideal in~$\mT$.
\end{Lem}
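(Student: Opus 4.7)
The plan is to leverage the fact that $\mT$ is the Freyd envelope of~$\cat{T}^c$: every finitely presented module admits a two-term projective presentation by representables, and tensoring such a presentation with an object of~$\cat{B}$ should land inside~$\cat{B}$ by right-exactness of the Day tensor combined with the $\cat{T}^c$-stability hypothesis.

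Concretely, we fix $B \in \cat{B}$ and $M \in \mT$, and aim to show that $M \otimes B \in \cat{B}$. The first step is to choose a presentation
\[
\hat{x}_1 \to \hat{x}_0 \to M \to 0
\]
with $x_0, x_1 \in \cat{T}^c$, which exists because the representables $\hat{x}$ for $x \in \cat{T}^c$ form a set of (finitely presented) projective generators of~$\mT$. The second step is to apply $-\otimes B$: this functor preserves all colimits (Remark~\ref{rem:Day-1}), in particular it is right exact, so we obtain an exact sequence
\[
\hat{x}_1 \otimes B \to \hat{x}_0 \otimes B \to M \otimes B \to 0.
\]
Thus $M \otimes B$ is a quotient of $\hat{x}_0 \otimes B$.

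The final step is to invoke the hypothesis: $\hat{x}_0 \otimes B \in \cat{B}$, and since $\cat{B}$ is a Serre subcategory of~$\mT$ it is closed under quotients, so $M \otimes B \in \cat{B}$ as desired. We do not anticipate any substantive obstacle here: the proof just combines the colimit-preservation of $-\otimes B$ (which is built into the Day convolution) with the existence of representable presentations of finitely presented modules (which is built into the definition of $\mT$ as the Freyd envelope of~$\cat{T}^c$).
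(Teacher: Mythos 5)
Your proof is correct and follows essentially the same route as the paper: the paper simply takes an epimorphism $\hat x\onto M$, tensors with $N\in\cat{B}$ using right-exactness of $-\otimes N$, and concludes that $M\otimes N$ is a quotient of $\hat x\otimes N\in\cat{B}$, hence lies in $\cat{B}$ by the Serre property. Your two-term presentation is slightly more than needed (only the epimorphism matters), but the argument is the same.
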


\begin{proof}
For every $M\in \mT$, there is an epimorphism $\hat x\onto M$ with~$x\in\cat{T}^c$. Tensoring with any $N \in \cat{B}$ we get $\hat{x}\otimes N \onto M\otimes N$ and $\hat x\otimes N$ belongs to~$\cat{B}$ by assumption, hence so does~$M\otimes N$ since $\cat{B}$ is Serre.
\end{proof}

We collect a few facts about the quotients of~$\cat{A}=\MT$ by a Serre $\otimes$-ideal.

\begin{Prop}
\label{prop:A/B}%
Let $\cat{B}\subset \cat{A}\fp$ be a Serre $\otimes$-ideal. Let $\overrightarrow{\cat{B}}\subset \cat{A}$ the localizing (Serre) $\otimes$-ideal it generates; see~\eqref{eq:B^to}.
\begin{enumerate}[\rm(a)]
\item
The Grothendieck-Gabriel quotient of the small categories $\cat{A}\fp/\cat{B}$ maps fully faithfully into the quotient of the big ones~$\bat{A}:=\cat{A}/\overrightarrow{\cat{B}}$ and identifies the former with the finitely presented objects of the latter:
\begin{equation}
\label{eq:quotient}%
\vcenter{
\xymatrix@R=2em{
\cat{B} \ar@{}[r]|-{\subseteq} \ar@{ >->}[d]
& \overrightarrow{\cat{B}} \kern-1em \ar@{ >->}[d]
\\
\kern-4em \mT = \cat{A}\fp \ar@{}[r]|-{\subseteq} \ar@{->>}[d]_-{Q}
& \cat{A} = \MT \kern-4.5em \ar@{->>}[d]_-{Q}
\\
\kern-3.5em \cat{A}\fp/\cat{B} = \bat{A}\fp \ar@{}[r]|-{\subseteq}
& \bat{A} := \cat{A}/\overrightarrow{\cat{B}} \kern-4em \ar@<-.5em>[u]_-{R}
}}
\end{equation}
\item
The functor~$Q$ is universal among exact functors out of~$\cat{A}$ with kernel~$\overrightarrow{\cat{B}}$. So $Q$ is exact and any functor $G$ out of~$\bat{A}$ is exact if and only if $GQ$ is exact.
\item
The quotient category~$\bat{A}$ inherits a unique tensor structure such that $Q\colon\cat{A}\to\bat{A}$ is a tensor-functor. This tensor remains colimit-preserving in each variable and admits an internal hom functor $\ihom_{\bat{A}}$.
\item
The functor $Q$ preserves flat objects.
\item
The functor~$Q$ admits a right adjoint on the big categories~$R\colon\bat{A}\to \cat{A}$, which is left exact, preserves injective objects and satisfies $QR\cong\Id_{\bat{A}}$. Moreover, $R$ preserves internal homs
\begin{equation}
\label{eq:aux-1}%
R (\ihom_{\bat{A}}(Y_1,Y_2)) \cong \ihomcat{A}(RN_1,RN_2).
\end{equation}
\end{enumerate}
\end{Prop}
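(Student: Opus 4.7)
The plan is to treat (a)--(e) as a standard package of facts about Gabriel quotients of a locally coherent Grothendieck category by a Serre $\otimes$-ideal, exploiting that $\cat{A}=\MT$ is locally coherent with $\cat{A}\fp=\mT$. For (a) and (b), I would first verify that every object of $\overrightarrow{\cat{B}}$ is a filtered colimit of objects of $\cat{B}$, so that local coherence yields $\overrightarrow{\cat{B}}\cap\cat{A}\fp=\cat{B}$. The Grothendieck--Gabriel quotient $\bat{A}$ is then again a locally coherent Grothendieck category, its finitely presented objects are $\cat{A}\fp/\cat{B}$ with the canonical fully faithful embedding into $\bat{A}$, and $Q$ is exact. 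Part (b) is then just the universal property of the quotient: every colimit-preserving exact $G\colon\cat{A}\to\cat{C}$ vanishing on $\overrightarrow{\cat{B}}$ factors uniquely through $Q$.

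For (c), the key observation is that because $\cat{B}$ is a $\otimes$-ideal and $\otimes_{\cat{A}}$ is cocontinuous, the class $\overrightarrow{\cat{B}}$ is itself a $\otimes$-ideal. Hence $\otimes_{\cat{A}}$ descends uniquely to a cocontinuous tensor on $\bat{A}$ via $QM_1\otimes_{\bat{A}}QM_2:=Q(M_1\otimes_{\cat{A}}M_2)$, making $Q$ strict monoidal; the internal hom on $\bat{A}$ is then supplied by Proposition~\ref{prop:closed}. For (d), given a mono $N_1\hook N_2$ in $\bat{A}$, I would lift it to a mono in $\cat{A}$ by choosing any lift $M_1\to M_2$ and replacing $M_1$ by its image --- this modifies $M_1$ only by a kernel and a cokernel in $\overrightarrow{\cat{B}}$, hence is invisible to $Q$. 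Flatness of $M\in\cat{A}$ then gives $M\otimes M_1\hook M\otimes M_2$, and applying the exact monoidal $Q$ produces the required mono $QM\otimes N_1\hook QM\otimes N_2$.

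For (e), the right adjoint $R$ exists by the special adjoint functor theorem applied to the cocontinuous $Q$ between Grothendieck categories; it is automatically left exact and preserves injectives (as right adjoint to the exact $Q$), and $QR\cong\Id_{\bat{A}}$ is a standard feature of Gabriel localization. Formula~\eqref{eq:aux-1} then follows by Yoneda from the chain
\begin{align*}
\Hom_{\cat{A}}(M,R\ihom_{\bat{A}}(N_1,N_2))
&\cong \Hom_{\bat{A}}(QM,\ihom_{\bat{A}}(N_1,N_2))\\
&\cong \Hom_{\bat{A}}(QM\otimes N_1,N_2)\\
&\cong \Hom_{\bat{A}}(Q(M\otimes RN_1),N_2)\\
&\cong \Hom_{\cat{A}}(M\otimes RN_1,RN_2)\\
&\cong \Hom_{\cat{A}}(M,\ihomcat{A}(RN_1,RN_2)),
\end{align*}
combining the $Q\adj R$ adjunction, the tensor--hom adjunction in $\bat{A}$, monoidality of $Q$ together with $QRN_1\cong N_1$, and the tensor--hom adjunction in $\cat{A}$.

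The main obstacle I expect is the identification $\bat{A}\fp=\cat{A}\fp/\cat{B}$ in (a), together with $\overrightarrow{\cat{B}}\cap\cat{A}\fp=\cat{B}$: this is the one step that genuinely uses the local coherence of $\cat{A}$ rather than being purely formal. Once it is in hand, the monoidal descent in (c) is routine given the $\otimes$-ideal property, and (d) and (e) reduce to standard manipulations with exactness and adjunctions.
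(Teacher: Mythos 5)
Your overall route coincides with the paper's, which simply defers to Appendix~\ref{app:Grothendieck}: identify $(\overrightarrow{\cat{B}})\fp=\cat{B}$ via local coherence, quote the standard Gabriel-quotient package for (a), (b) and (e), and descend the tensor for (c). Your proof of (d) is a mild variant (lifting a monomorphism of $\bat{A}$ to one of $\cat{A}$ and applying the exact monoidal $Q$, where the paper instead composes $Q\circ(M\otimes-)$ and invokes universality of $Q$ among exact functors), and your derivation of~\eqref{eq:aux-1} is the same adjunction chase as Lemma~\ref{lem:ihomcompatible}.

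The one genuine gap is in (c), where you assert that the tensor ``descends uniquely'' to $\bat{A}$ and later call this ``routine given the $\otimes$-ideal property''. It is not: the quotient $\bat{A}$ is the localization of $\cat{A}$ at the morphisms $s$ whose kernel and cokernel lie in $\overrightarrow{\cat{B}}$, so for $\otimes$ to descend one must check that $N\otimes s$ is again such a morphism for every $N$. Since $\otimes$ is only right exact, the ideal property controls $\coker(N\otimes s)$ but not $\Ker(N\otimes s)$ when $s$ is a monomorphism with cokernel $L$ in $\overrightarrow{\cat{B}}$; a priori that kernel could fail to lie in $\overrightarrow{\cat{B}}$, and then $Q$ would not invert $N\otimes s$ and the formula $QM_1\otimes QM_2:=Q(M_1\otimes M_2)$ would not be well defined on morphisms of the quotient. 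The missing step is to show $\Tor_1(N,L)\in\overrightarrow{\cat{B}}$ for all $L\in\overrightarrow{\cat{B}}$, which the appendix does by resolving $N$ by flat objects (available in $\MT$, where the $\hat x$ are flat, indeed projective) and using that $\overrightarrow{\cat{B}}$ is both Serre and a $\otimes$-ideal. With that supplement your argument for (c) -- and hence the rest of the proposal -- goes through.
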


\begin{proof}
All this is standard abelian category theory. See Appendix~\ref{app:Grothendieck}.
\end{proof}

\begin{Not}
When the category~$\cat{B}\subseteq\mT$ is clear from the context, we shall often denote the composite~$\boneda:=Q\circ\yoneda\colon\cat{T}\to \bat{A}$ by the simple notation
\begin{equation}
\label{eq:bar}%
\bar X=Q\yoneda(X)=Q(\hat X)
\qquadtext{and}
\bar f=Q\yoneda(f)=Q(\hat f)
\end{equation}
for every object~$X$ and morphism~$f$ in~$\cat{T}$.
\end{Not}

The above proposition holds for any locally coherent Grothendieck category with a tensor. In the particular case of~$\cat{A}=\MT$, we have the following consequences.

\begin{Cor}
\label{cor:A/B}%
Let $\cat{B}\subset\cat{A}\fp=\mT$ be a Serre $\otimes$-ideal and $\overrightarrow{\cat{B}}\subset\cat{A}=\MT$ as in Proposition~\ref{prop:A/B}. Then:
\begin{enumerate}[\rm(a)]
\smallbreak
\item
\label{it:boneda}%
The functor $\boneda=Q\circ\yoneda\colon\cat{T}\to\bat{A}=\MT/\overrightarrow{\cat{B}}$ is homological coproduct-preserving and monoidal.
\smallbreak
\item
\label{it:bar-flat}%
For every $X\in\cat{T}$, the object $\bar X$ remains flat in~$\bat{A}$.
\smallbreak
\item
\label{it:inj-bar}%
Every injective object in~$\bat{A}$ is of the form $\bar E$ for a unique pure-injective~$E\in \cat{T}$ with an isomorphism $\hat E\simeq R(\bar E)$ in~$\cat{A}$. Moreover, for every object $X\in\cat{T}$, the functor $\boneda$ induces an isomorphism
\begin{equation}
\label{eq:hom-to-bar-E}%
\Homcat{T}(X,E)\cong\Homcat{{}\bat{A}}(\bar X,\bar E).
\end{equation}
\smallbreak
\item
\label{it:inj-flat}%
Any injective object in the category~$\bat{A}$ is flat.
\end{enumerate}
\end{Cor}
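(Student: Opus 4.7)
The plan is to assemble each of the four parts by feeding the general quotient machinery of Proposition~\ref{prop:A/B} with the module-category specifics collected in Remarks~\ref{rem:Day-1} and~\ref{rem:pure-inj}. Everything should fall out of existing results; the only part requiring a little care is (c), which is where I would spend most of the write-up.

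For part~\eqref{it:boneda}, I would simply compose known properties: $\yoneda\colon\cat{T}\to\cat{A}$ is homological and coproduct-preserving by Remark~\ref{rem:hatG} and monoidal by Remark~\ref{rem:Day-1}, while $Q\colon\cat{A}\to\bat{A}$ is exact, colimit-preserving and monoidal by Proposition~\ref{prop:A/B}(b), (c), (e). Part~\eqref{it:bar-flat} is equally quick: Remark~\ref{rem:Day-1} gives $\hat X$ flat in~$\cat{A}$ for every $X\in\cat{T}$, and Proposition~\ref{prop:A/B}(d) says $Q$ preserves flatness, so $\bar X=Q(\hat X)$ is flat in~$\bat{A}$.

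The meat is part~\eqref{it:inj-bar}. Given an injective $J\in\bat{A}$, the right adjoint $R$ of Proposition~\ref{prop:A/B}(e) produces an injective object $R(J)\in\cat{A}=\MT$. By Remark~\ref{rem:pure-inj}, the equivalence between pure-injectives in~$\cat{T}$ and injectives in~$\cat{A}$ furnishes a pure-injective $E\in\cat{T}$, unique up to isomorphism, with $\hat E\cong R(J)$. Applying $Q$ and using $QR\cong\Id_{\bat{A}}$ from Proposition~\ref{prop:A/B}(e) gives $\bar E=Q(\hat E)\cong QR(J)\cong J$, as required. For the isomorphism~\eqref{eq:hom-to-bar-E}, I would chain the $Q\dashv R$ adjunction with the pure-injective enhancement~\eqref{eq:pure-h-ff} of Yoneda:
\[
\Homcat{{}\bat{A}}(\bar X,\bar E)=\Homcat{{}\bat{A}}(Q\hat X, J)\cong \Homcat{A}(\hat X, RJ)\cong \Homcat{A}(\hat X, \hat E)\cong \Homcat{T}(X,E).
\]
Part~\eqref{it:inj-flat} is then an immediate consequence: any injective in $\bat{A}$ has the form $\bar E$ for some (pure-injective) $E\in\cat{T}$ by \eqref{it:inj-bar}, and $\bar E$ is flat by \eqref{it:bar-flat}.

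The main obstacle, such as it is, lies in (c): one must correctly match the abstract adjunction-level identification of injectives in~$\bat{A}$ with images from $\cat{A}$ under $R$ on the one hand, and the tt-theoretic description of injectives in $\MT$ as restricted Yoneda images of pure-injectives on the other. Once these two bridges are crossed in the right order and $QR\cong\Id$ is invoked to descend back to $\bat{A}$, the Hom-formula and the uniqueness of $E$ are forced by~\eqref{eq:pure-h-ff}.
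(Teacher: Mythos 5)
Your proposal is correct and follows essentially the same route as the paper: parts (a), (b), (d) by direct composition of the properties of $Q$ from Proposition~\ref{prop:A/B}, and part (c) by combining Remark~\ref{rem:pure-inj} with $QR\cong\Id$ and then chaining~\eqref{eq:pure-h-ff}, the identification $\hat E\cong R(\bar E)$, and the $Q\dashv R$ adjunction (your chain of isomorphisms is the paper's, written in the opposite direction). Nothing is missing.
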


\begin{proof}
We use everywhere the results of Proposition~\ref{prop:A/B}. Part~\eqref{it:boneda} holds since~$Q$ is exact and coproduct-preserving and monoidal. Part~\eqref{it:bar-flat} holds since $Q$ preserves flat objects. Combining Remark~\ref{rem:pure-inj} and $QR\cong\Id$, we obtain the first sentence of~\eqref{it:inj-bar}. It remains to prove~\eqref{eq:hom-to-bar-E}. It is the following composite of isomorphisms:
\[
\Homcat{T}(X,E)\cong\Homcat{A}(\hat X,\hat E)\cong \Homcat{A}(\hat X,R\bar E)\cong\Homcat{{}\bat{A}}(\bar X,\bar E)
\]
using~\eqref{eq:pure-h-ff}, the defining relation $\hat E\cong R\bar E$ and the $Q\adj R$ adjunction. Finally~\eqref{it:inj-flat} results from~\eqref{it:inj-bar} and~\eqref{it:bar-flat}.
\end{proof}

\begin{Rem}
The analogue of Lemma~\ref{lem:ihominj} also holds in $\bat{A}$, namely, if $J$ is an injective object of $\bat{A}$ then it is also injective for the internal hom i.e.\ the functor $\ihom_{\bat{A}}(-,J)$ is exact. To see this, as $\ihom_{\bat{A}}(-,J)$ is a right adjoint, we only need to prove that it is right exact; keep in mind that it is \emph{contravariant}. Since $R$ preserves injectives (Proposition~\ref{prop:A/B}), the object $RJ$ is injective in~$\cat{A}$. By~\eqref{eq:aux-1} and $QR\cong\Id$, our functor $\ihom_{\bat{A}}(-,J)$ is the following composition of functors:
\[
 Q\circ \ihomcat{A}(-,RJ)\circ R\,.
\]
The first one, $R$, is \emph{left} exact (Proposition~\ref{prop:A/B}). The second one is \emph{contravariant} and exact by Lemma~\ref{lem:ihominj} since $RJ$ is injective. The third one is exact. So the composite is \emph{right} exact, as desired.
\end{Rem}

\goodbreak
\section{Constructing pure-injectives}
\label{se:E}%
\medbreak

For this section, we fix a Serre $\otimes$-ideal $\cat{B}\subset\cat{A}\fp=\mT$ of finitely presented $\cat{T}^c$-modules. As in Section~\ref{se:MT}, we denote by~$\cat{A}=\MT$ the whole category of $\cat{T}^c$-modules and by~$Q\colon\cat{A}\to \bat{A}=\MT/\overrightarrow{\cat{B}}$ the corresponding quotient. We write $\boneda\colon\cat{T}\to \bat{A}$, or~$X\mapsto\bar{X}$, for the composed functor~$Q\circ\yoneda$, see~\eqref{eq:bar}.

\begin{Cons}
\label{cons:E}%
Consider an injective envelope of the $\otimes$-unit~$\bar \unit$
\begin{equation}
\label{eq:bar-E}%
\bar\eta\colon\bar\unit\into\bar E
\end{equation}
in the Grothendieck category~$\bat{A}$. By Corollary~\ref{cor:A/B}\,\eqref{it:inj-bar} there exists a pure-injective
\[
\qquad
E=E(\cat{B})\quadtext{in}\cat{T},
\]
unique, up to unique isomorphism, together with an isomorphism $\hat E\cong R(\bar E)$ and therefore $Q(\hat E)\cong\bar E$. The identification~\eqref{eq:hom-to-bar-E} gives us a unique morphism $\eta\colon\unit\to E$ such that $Q(\hat \eta)=\bar \eta$. This justifies the notation~$\bar\eta$ and~$\bar{E}$ in~\eqref{eq:bar-E}.
\end{Cons}

Let us investigate the properties of the pure-injective object~$E=E(\cat{B})$ of Construction~\ref{cons:E}. We already know by Corollary~\ref{cor:A/B}\,\eqref{it:inj-flat} that $\bar E$ is flat and clearly $\bar E$ is non-zero in~$\bat{A}$ as soon as $\cat{B}\subset\cat{A}$ is proper, since $\bar\unit$ is a subobject of~$\bar E$.

\begin{Prop}
\label{prop:E-tens}%
Consider an exact triangle~$\Delta$ in~$\cat{T}$ on the morphism $\eta\colon\unit\to E$:
\[
\xymatrix{
\Delta\colon
&& \Sigma\inv W \ar[r]^-{\xi}
& \unit \ar[r]^-{\eta}
& E \ar[r]^-{\zeta}
& W.
}
\]
It satisfies the following properties:
\begin{enumerate}[\rm(a)]
\smallbreak
\item
The image of~$\Delta$ in~$\bat{A}=\MT/\overrightarrow{\cat{B}}$ is the following exact sequence
\[
\xymatrix{
0 \ar[r] & {\bar{\unit}} \ar[r]^-{\bar{\eta}} & \bar{E} \ar[r]^-{\bar{\zeta}} & \bar{W} \ar[r] &  0\,.
}
\]
In particular, $\bar \xi=Q(\hat\xi)$ is zero.
\smallbreak
\item
Every morphism $f\colon X\to \unit$ in~$\cat{T}$ whose image $\bar f=0$ vanishes in~$\bat{A}$ factors (possibly non-uniquely) through~$\xi$, that is, $f=\xi\,\tilde f$ for $\tilde f\colon X\to \Sigma\inv W$.
\smallbreak
\item
We have $\xi\otimes E=0$.
\smallbreak
\item Let $f\colon X\to c$ be a morphism in~$\cat{T}$ such
    that $\bar f=0$ in~$\bat{A}$ and $c\in \cat{T}^c$ is
    compact. Then $E\otimes f=0$.
\end{enumerate}
\end{Prop}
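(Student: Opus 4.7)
The plan is to reduce part (d) to the previously established part (c) via the rigidity of the compact object $c$.

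First I would exploit that $c \in \cat{T}^c$ is rigid (Hypothesis~\ref{hyp:big}), so the duality $-\otimes c^\vee \adj -\otimes c$ yields a natural bijection $\Homcat{T}(X,c) \cong \Homcat{T}(X\otimes c^\vee,\unit)$. Under this bijection, $f\colon X\to c$ corresponds to the morphism $g := \ev_c \circ (f\otimes c^\vee)\colon X\otimes c^\vee \to \unit$. Since $\boneda$ is strict monoidal by Corollary~\ref{cor:A/B}\,\eqref{it:boneda}, we have $\overline{f\otimes c^\vee} = \bar f \otimes \overline{c^\vee} = 0$ from $\bar f =0$, and consequently $\bar g = \overline{\ev_c}\circ\overline{f\otimes c^\vee}=0$ in~$\bat{A}$.

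Now $g$ lands in~$\unit$, so part~(b) applies: there exists $\tilde g\colon X\otimes c^\vee \to \Sigma^{-1}W$ with $g = \xi\circ \tilde g$. Tensoring with $E$ gives
\[
E\otimes g \;=\; (E\otimes \xi)\circ (E\otimes \tilde g)\;=\;0
\]
since $E\otimes \xi = 0$ by part~(c) (using symmetry of $\otimes$). The same rigidity adjunction, applied on the $c$-coordinate of the morphism $E\otimes f\colon E\otimes X\to E\otimes c$, transforms it into $E\otimes g$; more precisely $E\otimes g = (E\otimes \ev_c)\circ(E\otimes f\otimes c^\vee)$ is the mate of $E\otimes f$ under the isomorphism $\Homcat{T}(E\otimes X, E\otimes c)\cong \Homcat{T}(E\otimes X\otimes c^\vee, E)$. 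Since $E\otimes g = 0$ and the bijection is an isomorphism, we conclude $E\otimes f = 0$, as desired.

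I expect no serious obstacle here, as the argument is essentially formal: the real content lies in parts~(b) and~(c), while (d) is obtained by the standard rigidity trick that reduces a statement about morphisms into a compact-rigid object to a statement about morphisms into~$\unit$. The one point requiring care is that monoidality of $\boneda$ (not merely of $Q$) is what allows $\bar f = 0$ to propagate to $\bar g = 0$ across the duality.
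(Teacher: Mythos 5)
Your argument for part~(d) is correct and coincides, step for step, with the paper's own proof: pass to the mate $g=\ev_c\circ(f\otimes c^\vee)$ using rigidity of~$c$, deduce $\bar g=0$ from monoidality of~$\boneda$, factor $g$ through~$\xi$ by~(b), kill $E\otimes g$ by~(c), and recover $E\otimes f$ from $E\otimes g$ through the (co)evaluation composite. The paper writes out the retrieval $f=(1\otimes g)\circ(\coev_c\otimes 1)$ explicitly rather than appealing to bijectivity of the mate correspondence, but this is the same argument.

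The gap is one of scope: parts (a), (b) and (c) are part of the statement and are not proved. You call them ``previously established'', but in the paper they are established inside this very proposition, and they are where the actual content lies. All three rest on the pure-injectivity of~$E$ via the isomorphism $\Homcat{T}(X,E)\cong\Hom_{\bat{A}}(\bar X,\bar E)$ of Corollary~\ref{cor:A/B}: part~(a) holds because $\boneda$ is homological and $\bar\eta$ is a monomorphism (being an injective envelope by Construction~\ref{cons:E}); part~(b) because $\bar f=0$ gives $\overline{\eta f}=\bar\eta\,\bar f=0$, hence $\eta f=0$ already in~$\cat{T}$ by that isomorphism, so $f$ factors through~$\xi$ by exactness of~$\Delta$; and part~(c) by applying the same isomorphism to $\xi\otimes\id_E$, whose image in~$\bat{A}$ vanishes by~(a). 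Without these inputs the reduction you perform in~(d), while correct and exactly the intended one, has nothing to reduce to.
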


\begin{proof}
Part~(a) is immediate from the fact that $\boneda\colon\cat{T}\to \bat{A}$ is homological and the fact that $\boneda(\eta)=\bar \eta$ is a monomorphism~\eqref{eq:bar-E}. To prove~(b), let $f\colon X\to \unit$ in~$\cat{T}$ such that $\bar f=0$. By exactness of~$\Delta$, we only need to prove that the composite $\eta\,f\colon X\to E$ vanishes in~$\cat{T}$. By pure-injectivity of~$E$, this follows from~\eqref{eq:hom-to-bar-E} and the vanishing $\overline{\eta\,f}=\bar\eta\bar f=0$ in~$\bat{A}$. We also deduce~(c) from the same isomorphism~\eqref{eq:hom-to-bar-E} in Corollary~\ref{cor:A/B} applied to the morphism $\xi\otimes \id_E$ from $X:=\Sigma\inv W\otimes E$ to~$E$ and the already established~$\bar \xi=0$ in~(a). For~(d), consider the morphism $g\in\Homcat{T}(c^\vee\otimes X,\unit)\cong\Homcat{T}(X,c)\ni f$ corresponding to~$f$ under the adjunction. This uses rigidity of~$c$. Explicitly, $g$ is the following composite:
\[
\xymatrix@C=4em{
c^\vee \otimes X \ar[r]^-{1\otimes f}
& c^\vee \otimes c \ar[r]^-{\ev}
& \unit\,.
}
\]
In particular, since $\boneda\colon\cat{T}\too \bat{A}$ is monoidal, we see that $\bar g=0$. By~(b) applied to~$g$, we see that $g$ factors through~$\xi$ and thus by~(c), we have $g\otimes E=0$.
We can then recover $f$ from~$g$ as the composite
\[
\xymatrix@C=4em{
X \ar[r]^-{\coev\otimes 1}
& c\otimes c^\vee \otimes X\ar[r]^-{1\otimes g}
& c
}
\]
In particular $g\otimes E=0$ implies $f\otimes E=0$ as well.
\end{proof}

The following lemma formalizes an elementary argument that we will use numerous times.

\begin{Lem}\label{lem:square}
Suppose we are given maps
\begin{displaymath}
f\colon X\to Y \quad \text{and} \quad f'\colon X'\to Y'
\end{displaymath}
such that $Y\otimes f'$ is a monomorphism and $f\otimes Y' =0$. Then $f\otimes X' = 0$.
\end{Lem}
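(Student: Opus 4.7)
The plan is to exploit the bifunctoriality of the tensor product to build a commutative square relating the four morphisms obtained by tensoring $f$ and $f'$ with each other's source and target, and then chase zeros around this square using the two hypotheses.

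Concretely, I would consider the naturality square of $\otimes$ associated to the pair $(f, f')$:
\[
\xymatrix@C=4em{
X\otimes X' \ar[r]^-{f\otimes X'} \ar[d]_-{X\otimes f'}
& Y\otimes X' \ar[d]^-{Y\otimes f'}
\\
X\otimes Y' \ar[r]^-{f\otimes Y'}
& Y\otimes Y'
}
\]
Both composites equal $f\otimes f'$ by bifunctoriality, so the square commutes. The hypothesis $f\otimes Y' = 0$ makes the bottom arrow zero, so the lower-left composite vanishes, hence the upper-right composite $(Y\otimes f')\circ(f\otimes X')$ is zero as well. Since $Y\otimes f'$ is a monomorphism by hypothesis, left-cancellation forces $f\otimes X' = 0$.

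There is essentially no obstacle: the proof is a one-line diagram chase once the square is drawn. The only thing worth flagging is the category in which the argument takes place; the statement is used both for maps in $\cat{T}$ (where ``monomorphism'' must be interpreted via the tensor-triangulated structure, typically after passing to $\cat{A}$ or $\bat{A}$ via $\yoneda$ or $\boneda$) and for maps already sitting in $\cat{A}$ or $\bat{A}$. In every such setting, the argument is the same formal consequence of bifunctoriality of $\otimes$ together with left-cancellation by a monomorphism, so no further machinery is needed.
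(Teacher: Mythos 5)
Your proposal is correct and is essentially identical to the paper's own proof: the same naturality square for $(f,f')$, the same observation that both composites equal $f\otimes f'$ and that the bottom map vanishes, and the same left-cancellation by the monomorphism $Y\otimes f'$. Nothing further is needed.
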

\begin{proof}
We can arrange the various maps between tensor products in a square
\[
\xymatrix@C=4em{
X\otimes X' \ar[r]^-{f\otimes X'} \ar[d]_-{X\otimes f'}
& Y\otimes X' \ar[d]^-{Y\otimes f'}
\\
X\otimes Y' \ar[r]^-{f\otimes Y'}
& Y\otimes Y'
}
\]
which commutes by bifunctoriality of the tensor product. By hypothesis
the bottom horizontal map vanishes, and thus so do both composites. As
the rightmost vertical map is a monomorphism this forces $f\otimes
X'=0$ as claimed.
\end{proof}

\begin{Thm}
\label{thm:E-tens}%
With the notation of Construction~\ref{cons:E} for $E=E(\cat{B})$, we have in~$\cat{A}$
\[
\overrightarrow{\cat{B}}=\Ker(\hat E\otimes-).
\]
\end{Thm}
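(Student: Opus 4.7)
The plan is to prove each inclusion separately, noting that $\Ker(\hat E\otimes -)$ is a localizing $\otimes$-ideal of $\cat A$ because $\hat E=\yoneda(E)$ is flat by Remark~\ref{rem:Day-1}, making $\hat E\otimes -$ exact and coproduct-preserving. For $\overrightarrow{\cat B}\subseteq\Ker(\hat E\otimes -)$, it then suffices to show $\cat B\subseteq\Ker(\hat E\otimes -)$. Given $M\in\cat B$, I present $M=\coker(\hat f)$ for some $f\colon x_1\to x_0$ in $\cat T^c$ and complete $f$ to an exact triangle $x_1\xrightarrow{f}x_0\xrightarrow{g}C$ with $C\in\cat T^c$. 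Since $\yoneda$ is homological, $M=\img(\hat g)$ in $\cat A$, so applying $Q$ shows that $\bar g$ factors through $Q(M)=0$, whence $\bar g=0$ in $\bat A$. Proposition~\ref{prop:E-tens}(d), applicable since $C$ is compact, then yields $E\otimes g=0$ in $\cat T$; by monoidality of $\yoneda$, $\hat E\otimes\hat g=\widehat{E\otimes g}=0$, and exactness of $\hat E\otimes -$ gives $\hat E\otimes M=\img(\hat E\otimes\hat g)=0$.

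For the reverse inclusion $\Ker(\hat E\otimes -)\subseteq\overrightarrow{\cat B}$, let $M\in\cat A$ satisfy $\hat E\otimes M=0$. Applying the strict monoidal exact functor $Q$ yields $\bar E\otimes QM=0$ in $\bat A$, reducing the theorem to the \emph{faithfulness claim}: if $\bar E\otimes N=0$ in $\bat A$ then $N=0$. I first prove this for $N\in\bat{A}\fp$. By Proposition~\ref{prop:A/B}(a), such $N$ has the form $QM'$ with $M'=\coker(\hat f)\in\mT$ for some $f\colon x_1\to x_0$ in $\cat T^c$; completing $f$ with cone $g\colon x_0\to C\in\cat T^c$ yields in $\bat A$ an exact sequence $\bar x_1\xrightarrow{\bar f}\bar x_0\xrightarrow{\bar g}\bar C$ with $N=\img(\bar g)$. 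Tensoring with the flat object $\bar E$ (Corollary~\ref{cor:A/B}(b)) gives $\img(\bar E\otimes\bar g)=\bar E\otimes N=0$, hence $\bar E\otimes\bar g=0$. Now $\bar C=Q\hat C$ is rigid, since the monoidal functor $Q$ preserves the rigidity of $\hat C$, and hence flat, so Lemma~\ref{lem:square} applied to $\bar g$ and $\bar\eta$ (with $\bar C\otimes\bar\eta$ monomorphic by flatness of $\bar C$, and $\bar g\otimes\bar E=0$ by hypothesis) forces $\bar g=0$, giving $N=\img(\bar g)=0$. For general $N\ne 0$, the local coherence of $\bat A$ provides a finitely presented nonzero subobject $I\hookrightarrow N$ (the image of some nonzero $\bar c\to N$); flatness of $\bar E$ yields $\bar E\otimes I\hookrightarrow\bar E\otimes N=0$, and the finitely presented case forces $I=0$, a contradiction.

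The main obstacle is the faithfulness claim for the reverse inclusion. The naturality square for $\bar\eta\otimes -$ does not suffice in isolation because $\bar\eta\otimes N$ is generally not monomorphic for arbitrary $N$; the escape is to reduce to finitely presented $N$ and trade the vanishing of an object for the vanishing of a morphism $\bar g$ whose target $\bar C=Q\hat C$ is a genuinely flat rigid object coming from a cone in $\cat T^c$, at which point Lemma~\ref{lem:square} closes the argument.
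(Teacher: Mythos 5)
Your first inclusion $\overrightarrow{\cat{B}}\subseteq\Ker(\hat E\otimes-)$ is correct and is essentially the paper's argument: present $M\in\cat{B}$ as the image of a map between representables, observe $\bar g=0$, and invoke Proposition~\ref{prop:E-tens}\,(d). The finitely presented case of your reverse inclusion is also sound. The gap is in the passage from finitely presented $N$ to arbitrary $N$. Local coherence of $\bat{A}$ does \emph{not} provide a non-zero finitely presented subobject of an arbitrary non-zero object: the image of a non-zero map $\bar c\to N$ is finitely \emph{generated}, being a quotient $\bar c/K$, but $K$ need not be finitely generated, so the image need not be finitely presented. In a locally coherent category that is not locally noetherian there can be non-zero objects with no non-zero finitely presented subobject at all (already in $\Mod R$ for $R$ coherent but non-noetherian, take $R/I$ with $I$ not finitely generated: its only non-zero subobject is itself and it is not finitely presented). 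Since you cannot feed such an $I$ into your finitely presented case, the faithfulness claim for general $N$ -- and hence the inclusion $\Ker(\hat E\otimes-)\subseteq\overrightarrow{\cat{B}}$ -- is unproven as written.

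The repair is to avoid the reduction to $\bat{A}\fp$ altogether, which is what the paper does: an arbitrary $M\in\cat{A}$ with $\hat E\otimes M=0$ is the image of $\hat f$ for a morphism $f\colon X\to Y$ in the \emph{big} category $\cat{T}$ (take a projective presentation of $M$ by objects of $\Add(\cat{T}^c)$, realize it in $\cat{T}$ and pass to the cone). Although $Y$ is no longer compact, $\bar Y$ is still flat by Corollary~\ref{cor:A/B}\,\eqref{it:bar-flat}, so Lemma~\ref{lem:square} applied to $\bar f$ and $\bar\eta$ gives $\bar f=0$ and hence $Q(M)=\im(\bar f)=0$ directly. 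Your finitely presented argument is exactly this special case with $X,Y$ compact; replacing ``compact'' by ``arbitrary object of $\cat{T}$'' is all that is needed, and it renders the two-step reduction (and the false subobject claim) unnecessary.
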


\begin{proof}
Let $M\in\cat{A}$ be such that $\hat E\otimes M=0$. There exists a morphism $f\colon X\to Y$ in~$\cat{T}$ such that $M$ is the image of~$\hat f$. Indeed, we may take a projective presentation of $M$ using objects in the image of $\Add(\cat{T}^c)$ and then take the cone on the map giving the presentation. The assumption that $\hat E\otimes M=0$ implies $\bar E\otimes \bar f=0$ in~$\bat{A}$.

Applying Lemma~\ref{lem:square} to $\bar f\colon \bar X\to \bar Y$ and $\bar \eta\colon \bar \unit\to \bar E$, which is reasonable since $\bar \eta$ is a monomorphism and $\bar Y$ is flat, we see $\bar f=0$. Therefore, by exactness of~$Q$, we have $Q(M)=\im(\bar f)=0$, which proves that $M\in\Ker(Q)=\overrightarrow{\cat{B}}$.

Conversely, let us show that $\overrightarrow{\cat{B}}\subseteq\Ker(\hat E\otimes-)$. As $\hat E$ is flat in~$\cat{A}$, it suffices to prove the inclusion of the finitely presented part $\cat{B}\subseteq \Ker(\hat E\otimes-)$. Let $M\in\cat{B}$. Then there exists a morphism $f\colon x\to y$ in~$\cat{T}^c$ such that $M$ is the image of~$\hat f$. By Proposition~\ref{prop:E-tens}\,(d), we know that $\bar f=0$ in~$\bat{A}$ forces $E\otimes f=0$ in~$\cat{T}$ and therefore $\hat E\otimes M=0$ in~$\cat{A}$ as wanted.
\end{proof}

\begin{Cor}
\label{cor:E-residual}%
Let $f\colon X\to Y$ be a morphism in~$\cat{T}$. Then we have $E\otimes f=0$ in~$\cat{T} \implies \hat E\otimes \hat f=0$ in~$\cat{A}$ (\ie $E\otimes f$ is a phantom) $\iff \bar f=0$ in~$\bat{A}$. Moreover, all three properties are equivalent if~$Y$ is in~$\cat{T}^c$.
\qed
\end{Cor}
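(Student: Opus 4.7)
The plan is to establish the three conditions separately, each by invoking a previously proved result. Denote by (1), (2), (3) the properties $E\otimes f=0$ in $\cat{T}$, $\hat{E}\otimes\hat{f}=0$ in $\cat{A}$, and $\bar{f}=0$ in $\bat{A}$, respectively. For the implication (1) $\Rightarrow$ (2), I would simply observe that restricted-Yoneda is monoidal (Remark~\ref{rem:Day-1}), so $\widehat{E\otimes f}=\hat{E}\otimes\hat{f}$, and the right-hand side vanishes whenever the left-hand side does.

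The heart of the argument is the equivalence (2) $\iff$ (3), which reduces to Theorem~\ref{thm:E-tens} by passing from objects to morphisms via images. Since $\hat{E}$ is flat in $\cat{A}$ (Remark~\ref{rem:Day-1}), the functor $\hat{E}\otimes-$ is exact; hence $\hat{E}\otimes\hat{f}=0$ is equivalent to $\hat{E}\otimes\img(\hat{f})=0$. By Theorem~\ref{thm:E-tens}, the latter amounts to $\img(\hat{f})\in\overrightarrow{\cat{B}}=\Ker(Q)$. Since $Q$ is exact, this is in turn equivalent to $\img(\bar{f})=Q(\img(\hat{f}))=0$, i.e.\ $\bar{f}=0$.

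For the supplementary statement assuming $Y\in\cat{T}^c$, the missing implication (2) $\Rightarrow$ (1) is furnished by Proposition~\ref{prop:E-tens}(d): having already shown (2) $\iff$ (3), we may assume $\bar{f}=0$, and that proposition then yields $E\otimes f=0$ because the target $Y=c$ is compact. The only non-formal step is the equivalence (2) $\iff$ (3), whose substantive input is Theorem~\ref{thm:E-tens}; everything else is bookkeeping with exact and monoidal functors.
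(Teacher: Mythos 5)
Your proof is correct and follows exactly the route the paper intends: the corollary is stated with \qed precisely because (1)$\Rightarrow$(2) is monoidality of restricted-Yoneda, (2)$\iff$(3) is Theorem~\ref{thm:E-tens} applied to $\img(\hat f)$ using flatness of $\hat E$ and exactness of $Q$, and the compact case is Proposition~\ref{prop:E-tens}(d). Nothing to add.
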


\begin{Cor}
\label{cor:E-tens}%
Let $X\in\cat{T}$. Then $\bar X=0$ in~$\bat{A}$ if and only if~$E\otimes X=0$.
\end{Cor}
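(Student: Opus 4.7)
The plan is to chain together three equivalences, using Theorem~\ref{thm:E-tens} as the workhorse and the remaining steps being formal. The statement to prove is that, for $X\in\cat{T}$, we have $\bar X=0$ in $\bat{A}$ if and only if $E\otimes X=0$ in $\cat{T}$.

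First, by the definition $\boneda=Q\circ\yoneda$ and the fact that $Q$ is exact with kernel $\overrightarrow{\cat{B}}$, the condition $\bar X=Q(\hat X)=0$ is equivalent to $\hat X\in\overrightarrow{\cat{B}}$. Second, Theorem~\ref{thm:E-tens} identifies $\overrightarrow{\cat{B}}=\Ker(\hat E\otimes-)$ as subcategories of $\cat{A}=\MT$, so $\hat X\in\overrightarrow{\cat{B}}$ is equivalent to $\hat E\otimes\hat X=0$ in $\cat{A}$. Third, since restricted-Yoneda $\yoneda\colon\cat{T}\to\cat{A}$ is monoidal (Remark~\ref{rem:Day-1}), there is a natural isomorphism $\hat E\otimes\hat X\cong\widehat{E\otimes X}$, and since $\yoneda$ is conservative on all of $\cat{T}$ (because $\cat{T}^c$ generates $\cat{T}$), the vanishing of $\widehat{E\otimes X}$ is equivalent to the vanishing of $E\otimes X$ in $\cat{T}$.

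There is no real obstacle: Theorem~\ref{thm:E-tens} has already done the serious work of relating $\overrightarrow{\cat{B}}$ with $\hat E\otimes-$, and the corollary is essentially the application of this theorem to the representable $\hat X$, transported back into $\cat{T}$ via the monoidal, conservative restricted-Yoneda functor. One could alternatively phrase the last step using Lemma~\ref{lem:compacttensor} for the compact part, but the clean route is through the monoidality already recorded in Remark~\ref{rem:Day-1}.
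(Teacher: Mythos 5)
Your proof is correct and follows essentially the same route as the paper: the paper simply applies Corollary~\ref{cor:E-residual} to the identity morphism $f=1_X$, which packages exactly the same three ingredients you use (Theorem~\ref{thm:E-tens}, monoidality of restricted Yoneda, and its conservativity on objects of $\cat{T}$). Your direct application of Theorem~\ref{thm:E-tens} to the object $\hat X$ is a harmless shortcut past the morphism-level corollary.
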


\begin{proof}
This follows from the previous corollary for $f=1\colon X\to X$ since restricted-Yoneda~$X\mapsto \hat X$ is conservative: $\hat E\otimes\hat X=0\implies E\otimes X=0$.
\end{proof}

\begin{Rem}
It follows from the above discussion that a Serre $\otimes$-ideal $\cat{B}$ in $\mT$ is determined by each of the following:
\begin{enumerate}[(1)]
\item The morphisms $f\colon c\to \unit$ between compact objects in
  $\cat{T}$ such that $\bar f=0$.
\item The morphisms $f\colon c\to d$ between compact objects in
  $\cat{T}$ such that $\bar f=0$.
\item The pure-injective object $E=E(\cat{B})$ in $\cat{T}$.
\end{enumerate}
\end{Rem}

In the spirit of \cite[Theorem~3.10]{BalmerKrauseStevenson17pp} we can also give a generator for $\overrightarrow{\cat{B}}$.

\begin{Prop}\label{prop:I-gens}
Let $\cat{B}$ be a Serre $\otimes$-ideal in~$\mT$ and let $I=\Ker(\hat\eta)$ be the kernel in~$\MT$ of the map $\eta\colon\unit\to E$ of Construction~\ref{cons:E}. Then we have
\begin{displaymath}
\overrightarrow{\cat{B}} = \langle I \rangle
\end{displaymath}
i.e.\ $\overrightarrow{\cat{B}}$ is the smallest Serre $\otimes$-ideal containing $I$ and closed under coproducts.
\end{Prop}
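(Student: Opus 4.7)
My plan is to establish the two inclusions separately. The inclusion $\langle I\rangle \subseteq \overrightarrow{\cat{B}}$ reduces to showing $I \in \overrightarrow{\cat{B}} = \Ker(Q)$, and this is immediate from exactness of~$Q$: applied to the defining sequence $0 \to I \to \hat\unit \xrightarrow{\hat\eta} \hat E$, it yields $Q(I) = \Ker(\bar\eta) = 0$, since $\bar\eta$ is the injective envelope~\eqref{eq:bar-E} and in particular a monomorphism.

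For the reverse inclusion $\overrightarrow{\cat{B}} \subseteq \langle I\rangle$, since $\langle I\rangle$ is already a Serre $\otimes$-ideal closed under coproducts, the minimality of $\overrightarrow{\cat{B}}$ reduces the task to showing $\cat{B} \subseteq \langle I\rangle$. Fix $N \in \cat{B}$. Using that $\yoneda$ is homological, I can present $N = \img(\hat g)$ for some morphism $g\colon y\to z$ in $\cat{T}^c$ (take an exact triangle $x\to y\xrightarrow{g} z$ and apply $\yoneda$). The assumption $N \in \cat{B}$ forces $Q(\hat g) = \bar g = 0$, and Proposition~\ref{prop:E-tens}\,(d) then yields $E \otimes g = 0$ in~$\cat{T}$.

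To exploit Proposition~\ref{prop:E-tens}\,(b), which only handles morphisms with target~$\unit$, I pass via rigidity of~$z\in\cat{T}^c$ to the adjoint morphism $\tilde g = \ev_z\circ(g\otimes 1_{z^\vee})\colon y\otimes z^\vee \to \unit$. Then $E\otimes \tilde g = 0$, whence $\bar{\tilde g}=0$ by Corollary~\ref{cor:E-residual}, and Proposition~\ref{prop:E-tens}\,(b) supplies a factorization $\tilde g = \xi \circ h$ for some $h\colon y\otimes z^\vee \to \Sigma\inv W$. Unwinding the rigidity adjunction to recover $g$ from $\tilde g$, the morphism $g$ factors through $1_z\otimes \xi\colon z\otimes \Sigma\inv W \to z\otimes\unit = z$. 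The monoidal functor $\yoneda$ then shows that $\hat g$ factors through $\hat z \otimes \hat\xi$, whose image coincides with $\hat z \otimes I$ by flatness of $\hat z$ (Remark~\ref{rem:Day-1}). Since $\langle I\rangle$ is a $\otimes$-ideal, $\hat z\otimes I \in \langle I\rangle$, and its subobject $N = \img(\hat g)$ lies in $\langle I\rangle$ as required. The only mildly delicate step is the rigidity detour needed to bring Proposition~\ref{prop:E-tens}\,(b) to bear; the rest is bookkeeping with images, flatness, and monoidality of~$\yoneda$.
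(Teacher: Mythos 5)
Your proof is correct, and the first inclusion is identical to the paper's. For the reverse inclusion $\overrightarrow{\cat{B}}\subseteq\langle I\rangle$, however, you take a genuinely different route. The paper invokes Theorem~\ref{thm:E-tens} to identify $\overrightarrow{\cat{B}}$ with $\Ker(\hat E\otimes-)$ and then treats an arbitrary $M\in\overrightarrow{\cat{B}}$ in one stroke: choosing a monomorphism $\alpha\colon M\into Y$ with $Y$ flat (an injective envelope, which is flat by Corollary~\ref{cor:A/B}), the exact sequence $0\to Y\otimes I\to Y\to Y\otimes\hat E$ together with $M\otimes\hat E=0$ exhibits $M$ as a subobject of $Y\otimes I$. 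You instead reduce to the finitely presented generators $N\in\cat{B}$ (legitimate, since $\overrightarrow{\cat{B}}$ is the filtered-colimit closure of $\cat{B}$ and any coproduct-closed Serre subcategory of a Grothendieck category is closed under filtered colimits), present $N=\img(\hat g)$, and chase $g$ through the weakly universal map $\xi$ of Proposition~\ref{prop:E-tens}\,(b) via the rigidity adjunction, landing $N$ inside $\hat z\otimes I$. Each step checks out: $\img(\hat\xi)=\Ker(\hat\eta)=I$ by homologicity of $\yoneda$, and flatness of $\hat z$ gives $\img(\hat z\otimes\hat\xi)=\hat z\otimes I$. What the comparison buys: the paper's argument is shorter and handles non-finitely-presented objects directly, at the cost of relying on Theorem~\ref{thm:E-tens}; yours avoids that theorem entirely (you only use parts (b), (d) of Proposition~\ref{prop:E-tens} and Corollary~\ref{cor:E-residual}) but pays with the reduction to $\cat{A}\fp$ and the rigidity bookkeeping. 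One cosmetic remark: your detour $\bar g=0\Rightarrow E\otimes g=0\Rightarrow E\otimes\tilde g=0\Rightarrow\bar{\tilde g}=0$ is roundabout; monoidality of $\boneda$ gives $\bar{\tilde g}=\overline{\ev_z}\circ(\bar g\otimes 1)=0$ directly from $\bar g=0$.
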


\begin{proof}
By Construction~\ref{cons:E}, we know that $Q(\hat\eta)=\bar\eta\colon\bar\unit\into \bar{E}$ is a monomorphism. Since the quotient functor $Q$ is exact we see that $QI\cong 0$ and so $I$ lies in $\Ker(Q)=\overrightarrow{\cat{B}}$. As $\overrightarrow{\cat{B}}$ is a localizing Serre $\otimes$-ideal it follows that $\langle I \rangle \subseteq \overrightarrow{\cat{B}}$.

On the other hand, suppose $M$ is in $\overrightarrow{\cat{B}}$ which is, by Theorem~\ref{thm:E-tens}, equivalent to saying $M\otimes \hat{E} \cong 0$. Choose some monomorphism $\alpha\colon M\to Y$ with $Y$ flat (for instance an injective envelope) and consider the diagram
\begin{displaymath}
\xymatrix@R=2em{
&& M \ar[r]^-{M\otimes \hat{\eta}} \ar@{ >->}[d]^-{\alpha} \ar@{-->}[ld]
& M\otimes \hat{E} =0 \kern-2em \ar[d]^-{\alpha\otimes \hat{E}}
\\
0 \ar[r]
& Y\otimes I \ar[r]
& Y \ar[r]^-{Y\otimes \hat{\eta}}
& Y \otimes \hat{E}
}
\end{displaymath}
where the bottom row is exact by flatness of $Y$ and the dashed arrow exists by the universal property of the kernel and is a monomorphism since $\alpha$ is one. This exhibits $M$ as a subobject of $Y\otimes I$ and hence $M\in \langle I \rangle$. Thus $\overrightarrow{\cat{B}} \subseteq \langle I \rangle$.
\end{proof}

Next let us remark on a little extra structure that $E$ can be endowed with. It follows from Proposition~\ref{prop:E-tens}\,(c) that $\Delta\otimes E$ splits, \ie that $E\otimes E\simeq E\oplus (W\otimes E)$, and in particular there is a retraction $E\otimes E\to E$ of~$\eta\otimes E$. Let us be more precise:
\begin{Prop}
There exists a morphism $\mu\colon E\otimes E\to E$ in~$\cat{T}$ such that
\begin{equation}
\label{eq:two-side-unit}%
\mu\circ(\eta\otimes 1)=\id_E=\mu\circ(1\otimes\eta)\,.
\end{equation}
\end{Prop}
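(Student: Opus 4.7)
The plan is to reduce to an abelian-categorical statement in $\bat{A}$ using pure-injectivity of $E$, then construct $\bar{\mu}$ by extending a fold map through a pushout, using that $\bar{E}$ is the injective envelope of $\bar{\unit}$. By Corollary~\ref{cor:A/B}\eqref{it:boneda} the functor $\boneda$ is monoidal, so $\overline{E\otimes E}=\bar{E}\otimes \bar{E}$; and the isomorphism~\eqref{eq:hom-to-bar-E} of Corollary~\ref{cor:A/B}\eqref{it:inj-bar} applied to the pure-injective~$E$ gives a natural bijection
\[
\Homcat{T}(E\otimes E,E)\;\isoto\;\Hom_{\bat{A}}(\bar{E}\otimes \bar{E},\bar{E}).
\]
Under this bijection the desired unit conditions translate to $\bar{\mu}\circ(\bar{\eta}\otimes 1)=\id_{\bar{E}}=\bar{\mu}\circ(1\otimes \bar{\eta})$, so it suffices to produce such a $\bar{\mu}$ in $\bat{A}$.

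Next I would form the pushout $P$ of $\bar{\eta}\colon\bar{\unit}\into\bar{E}$ along itself in $\bat{A}$, with inclusions $j_1,j_2\colon\bar{E}\to P$. The identity $(\bar{\eta}\otimes 1)\circ\bar{\eta}=\bar{\eta}\otimes\bar{\eta}=(1\otimes\bar{\eta})\circ\bar{\eta}$ induces a canonical morphism $\psi\colon P\to \bar{E}\otimes \bar{E}$, while the pair $(\id_{\bar{E}},\id_{\bar{E}})$ descends to a fold $\phi\colon P\to \bar{E}$ satisfying $\phi\circ j_i=\id_{\bar{E}}$. Assuming $\psi$ is a monomorphism, the injectivity of $\bar{E}$ in $\bat{A}$ (Corollary~\ref{cor:A/B}\eqref{it:inj-bar}) extends $\phi$ along $\psi$ to a morphism $\bar{\mu}\colon\bar{E}\otimes \bar{E}\to \bar{E}$ satisfying $\bar{\mu}\circ(\bar{\eta}\otimes 1)=\phi\circ j_1=\id_{\bar{E}}$ and $\bar{\mu}\circ(1\otimes \bar{\eta})=\phi\circ j_2=\id_{\bar{E}}$ by construction.

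The hard part will be showing that $\psi$ is a monomorphism, equivalently that the commutative square
\[
\xymatrix{
\bar{\unit}\ar[r]^-{\bar{\eta}}\ar[d]_-{\bar{\eta}}&\bar{E}\ar[d]^-{1\otimes\bar{\eta}}\\
\bar{E}\ar[r]_-{\bar{\eta}\otimes 1}&\bar{E}\otimes\bar{E}
}
\]
is bicartesian in $\bat{A}$. All four sides are monomorphisms by flatness of $\bar{E}$ (Corollary~\ref{cor:A/B}\eqref{it:bar-flat}), so the task reduces to showing that the intersection of the two images of $\bar{E}$ in $\bar{E}\otimes \bar{E}$ equals $(\bar{\eta}\otimes\bar{\eta})(\bar{\unit})$. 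A useful auxiliary observation I would try to exploit is that any $\gamma\in\End_{\bat{A}}(\bar{E})$ with $\gamma\circ\bar{\eta}=\bar{\eta}$ is automatically invertible: essentiality of $\bar{\eta}\colon\bar{\unit}\into\bar{E}$ makes $\gamma$ a monomorphism, injectivity of $\bar{E}$ splits off $\gamma(\bar{E})$ as a direct summand, and essentiality applied to the complement forces it to vanish. Combined with the retractions of $\bar{\eta}\otimes 1$ and $1\otimes\bar{\eta}$ provided by Proposition~\ref{prop:E-tens}\,(c) and its symmetric counterpart, this rigidity should pin down the intersection and, via Lemma~\ref{lem:square}, deliver the sought compatibility.
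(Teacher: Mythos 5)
Your opening reduction to $\bat{A}$ via~\eqref{eq:hom-to-bar-E} is correct and is exactly how the paper both starts and finishes. The genuine gap is the step you yourself flag as ``the hard part'': that $\psi\colon P\to\bar E\otimes\bar E$ is a monomorphism, i.e.\ that the square is cartesian, i.e.\ that $\ker\bigl(\bar\eta\otimes 1-1\otimes\bar\eta\colon\bar E\to\bar E\otimes\bar E\bigr)$ is exactly $\bar\unit$. This is an Amitsur-complex/descent-type exactness statement. In the classical faithfully flat setting it is proved by writing down a contracting homotopy built from the \emph{multiplication} on $E$ --- precisely the structure you are trying to construct --- so the argument is in danger of being circular. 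None of the facts you invoke (flatness of $\bar E$, injectivity of $\bar E$, essentiality of $\bar\eta$, Lemma~\ref{lem:square}, the separate retractions of $\bar\eta\otimes1$ and $1\otimes\bar\eta$) pins down the intersection $\im(\bar\eta\otimes1)\cap\im(1\otimes\bar\eta)$: essentiality controls subobjects of $\bar E$, not of $\bar E\otimes\bar E$, and even granting the conclusion of the Proposition one only gets $\im(\bar\eta\otimes1)\cap\im(1\otimes\bar\eta)\cong\ker(\bar\eta\otimes1-1\otimes\bar\eta)\supseteq\bar\unit$, not the reverse inclusion. So the decisive claim is unproved, and is at least as hard as the statement itself.

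Your ``auxiliary observation'' is, however, the right rigidity, and the paper uses it to bypass the pushout entirely: take \emph{any} retraction $\bar\mu_0$ of the split monomorphism $\bar\eta\otimes1$ (split by injectivity of $\bar E$, or by Proposition~\ref{prop:E-tens}\,(c)); then $\varphi:=\bar\mu_0\circ(1\otimes\bar\eta)$ satisfies $\varphi\,\bar\eta=\bar\eta$, hence is an isomorphism because $\bar\eta$ is an injective envelope; now \emph{correct} the retraction by setting $\bar\mu:=\bar\mu_0\circ(\varphi\inv\otimes1)$, which one checks is simultaneously a retraction of $\bar\eta\otimes1$ and of $1\otimes\bar\eta$, and finally lift through~\eqref{eq:hom-to-bar-E}. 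I recommend abandoning the bicartesian route and adopting this correction trick; if you insist on your route, you must supply an actual proof that $\psi$ is monic, which I do not believe the quoted results provide.
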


\begin{proof}
Using flatness of the injective~$\bar E$, we have a split exact sequence in~$\bat{A}$
\[
\xymatrix@C=1.5em{
0 \ar[r]
& \bar E \ar[rr]^-{\bar \eta\otimes 1}
&& \bar E\otimes \bar E \ar[rr]^-{\bar \zeta\otimes 1}
&& \bar W\otimes \bar E \ar[r]
& 0\,.}
\]
Choose a retraction of~$\bar\eta\otimes1$, say $\bar\mu_0\colon\bar E\otimes \bar E\to \bar E$. Consider the endomorphism $\varphi:=\bar\mu_0\circ(1\otimes \bar\eta)$ of~$\bar E$, that we wish was equal to the identity. At least it satisfies
\begin{equation}
\label{eq:aux-2}%
\varphi\,\bar\eta=\bar\mu_0\,(1\otimes \bar\eta)\,\bar\eta=\bar\mu_0\,(\bar\eta\otimes \bar\eta)\,=\bar\mu_0\,(\bar\eta\otimes 1)\,\bar\eta=\bar\eta.
\end{equation}
Hence, as $\bar\eta\colon\bar\unit\to \bar E$ was an injective envelope, $\varphi$ is an isomorphism. Let now $\bar \mu:=\bar\mu_0\circ (\varphi\inv\otimes1)\colon \bar E\otimes \bar E\to \bar E$. Direct computation gives us that $\bar \mu$ remains a retraction of~$\bar\eta\otimes 1$, since $\varphi\inv\bar\eta=\bar\eta$ by~\eqref{eq:aux-2}. But now $\bar\mu$ also satisfies $\bar\mu\,(1\otimes\bar\eta)=\id_{\bar E}$ as the outer commutativity of the following diagram shows:
\[
\xymatrix@C=4em{
\bar E \ar[d]_-{1\otimes \bar \eta} \ar[r]^-{\varphi\inv}
& \bar E \ar[d]_-{1\otimes \bar \eta} \ar[rd]^-{\varphi}
\\
\bar E \otimes \bar E \ar[r]^-{\varphi\inv\otimes 1} \ar@/_2em/[rr]^-{\bar\mu}
& \bar E \otimes \bar E \ar[r]^-{\bar\mu_0}
& \bar E\,.
}
\]

Finally, thanks to~\eqref{eq:hom-to-bar-E}, we can lift $\bar\mu$ to a unique morphism $\mu\colon E\otimes E\to E$ such that $\bar\mu = Q(\hat\mu)$ and~\eqref{eq:two-side-unit} holds, as all the relevant maps end in~$E$.
\end{proof}

\begin{Rem}
Unfortunately, we do not know how to upgrade $\mu\colon E\otimes E\to E$ to an associative, or commutative, ring structure.
\end{Rem}

Let us end the section with a rather amusing application of Proposition~\ref{prop:E-tens}; we will show that one can characterize phantoms ending in a compact object in terms of tensoring with pure-injectives. First, a preparatory lemma which could be of independent interest.

\begin{Lem}\label{lem:phantom}
Suppose $f\colon x\to c$ is a morphism between compacts and let $E(c)$ denote the pure-injective envelope of $c$. Then
\begin{displaymath}
f\otimes E(c) = 0 \implies f\otimes c = 0 \implies f =0.
\end{displaymath}
\end{Lem}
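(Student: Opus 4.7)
The plan is to handle each implication separately, using restricted-Yoneda for the first and rigidity of $c$ for the second.

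For $f\otimes E(c)=0 \implies f\otimes c=0$, I would pass to the module category~$\cat{A}=\MT$ via restricted-Yoneda. Let $\alpha\colon c\to E(c)$ denote the canonical embedding into the pure-injective envelope. Monoidality of~$\yoneda$ (Remark~\ref{rem:Day-1}) turns the hypothesis into $\hat f\otimes \hat{E(c)}=0$ in~$\cat{A}$. By definition of the pure-injective envelope, $\alpha$ is a pure mono, so $\hat\alpha$ is a monomorphism in~$\cat{A}$; since $\hat c$ is flat (Remark~\ref{rem:Day-1} again), the morphism $\hat c\otimes \hat\alpha$ remains a monomorphism. Lemma~\ref{lem:square} applied in~$\cat{A}$ to $\hat f\colon\hat x\to\hat c$ and $\hat\alpha\colon\hat c\to\hat{E(c)}$ then yields $\hat f\otimes \hat c=\widehat{f\otimes c}=0$, so $f\otimes c$ is a phantom. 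Since $x\otimes c$ is compact (compacts are closed under $\otimes$) and a phantom out of a compact object is zero --- one evaluates the natural transformation $\widehat{f\otimes c}$ at the source on $\id_{x\otimes c}$ --- we conclude $f\otimes c=0$.

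For $f\otimes c=0 \implies f=0$, I would exploit rigidity of $c$. Writing $\eta_c\colon\unit\to c^\vee\otimes c$ and $\epsilon_c\colon c\otimes c^\vee\to \unit$ for the coevaluation and evaluation, the triangle identity reads $(\epsilon_c\otimes \id_c)\circ (\id_c\otimes \eta_c)=\id_c$, whence bifunctoriality yields
\[
f = (\epsilon_c\otimes \id_c)\circ (f\otimes \id_{c^\vee\otimes c})\circ (\id_x\otimes \eta_c).
\]
The hypothesis $f\otimes \id_c=0$ gives $f\otimes \id_c\otimes \id_{c^\vee}=0$, and the symmetry isomorphism $c\otimes c^\vee\cong c^\vee\otimes c$ allows us to reorder factors to obtain $f\otimes \id_{c^\vee\otimes c}=0$. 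Inserting this into the displayed formula forces $f=0$.

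The main obstacle is the first implication: one must recognize that Lemma~\ref{lem:square} should be invoked in the abelian category~$\cat{A}$, not in~$\cat{T}$ itself, since the monomorphism condition is essentially an abelian notion. The three ingredients needed --- $\hat\alpha$ being mono by pure-injectivity, $\hat c$ being flat, and phantoms out of compacts vanishing --- are all already on hand, so once the right formulation is in mind, the argument is short. The second implication is a routine rigidity manipulation, modulo keeping track of conventions and the symmetry used to rearrange tensor factors.
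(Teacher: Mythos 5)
Your proof is correct and follows essentially the same route as the paper: the first implication is exactly the paper's argument (Lemma~\ref{lem:square} applied in $\cat{A}$ to $\hat f$ and the monomorphism $\hat c\into\widehat{E(c)}$, using flatness of $\hat c$, then compactness of $x\otimes c$ to kill the phantom). For the second implication the paper simply invokes Lemma~\ref{lem:square} a second time with $\coev_c$, whose split-mono property is the same triangle identity you unwind by hand, so the two arguments coincide in substance.
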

\begin{proof}
We are in the situation of Lemma~\ref{lem:square}: we have maps $\hat f\colon \hat x \to \hat c$ and $\hat \eta\colon \hat c \into \widehat{E(c)}$ and we know that $\hat c\otimes \hat \eta$ is a monomorphism, since $\hat c$ is flat. Thus if $\hat f\otimes E(c)$ vanishes so does $\hat f \otimes \hat c$.  It follows that $f\otimes c$ is trivial as $x\otimes c$ is compact.

Now suppose $f\otimes c$ is zero. Then we can apply Lemma~\ref{lem:square} again to the maps $f\colon x\to c$ and $\coev_c \colon \unit \to c^\vee \otimes c$, since $c\otimes\coev_c$ is a split monomorphism, to deduce that $f=0$.
\end{proof}

\begin{Cor}\label{Cor:phantom}
Let $c$ be a compact object of $\cat{T}$, denote by $E(c)$ the pure-injective envelope of~$c$, and let $f\colon X\to c$ be a morphism. Then $E(c)\otimes f = 0$ if and only if $f$ is phantom.
\end{Cor}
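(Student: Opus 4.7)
The plan is to establish the two implications separately, relying on Proposition~\ref{prop:phantom} for one direction and the preparatory Lemma~\ref{lem:phantom} for the other.

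For the nontrivial direction ($f$ phantom $\Rightarrow E(c)\otimes f = 0$), the idea is to combine three ingredients. First, by Proposition~\ref{prop:phantom}\,\eqref{it:phantom-a} applied with the phantom $f$, the map $E(c)\otimes f\colon E(c)\otimes X \to E(c)\otimes c$ is itself a phantom. Second, the target $E(c)\otimes c$ is pure-injective: indeed, since $c$ is compact-rigid, $c^\vee\in\cat{T}^c$ and we may rewrite $E(c)\otimes c\cong \ihom(c^\vee, E(c))$, which is pure-injective by Proposition~\ref{prop:phantom}\,\eqref{it:phantom-c}. Third, a phantom with pure-injective target vanishes, by the last sentence of Remark~\ref{rem:pure-inj} (via the full-faithfulness isomorphism~\eqref{eq:pure-h-ff}). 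Putting these together gives $E(c)\otimes f=0$.

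For the converse ($E(c)\otimes f=0 \Rightarrow f$ phantom), I would test against maps from compacts. Let $g\colon x\to X$ with $x\in\cat{T}^c$; we want to show $f\circ g=0$, since then $\hat f=0$ by the universal property of restricted-Yoneda on $\cat{T}^c$-modules (maps out of compacts detect vanishing in~$\cat{A}$). From $E(c)\otimes f=0$ we get
\[
E(c)\otimes (f\circ g) = (E(c)\otimes f)\circ (E(c)\otimes g) = 0.
\]
Now $f\circ g\colon x\to c$ is a map between compacts, so Lemma~\ref{lem:phantom} (which says exactly that tensoring a morphism between compacts with $E(c)$ is faithful) delivers $f\circ g=0$.

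I expect no serious obstacle. The only point that might initially look delicate is verifying that $E(c)\otimes c$ is pure-injective, but this is a direct citation of Proposition~\ref{prop:phantom}\,\eqref{it:phantom-c} once one remembers that $c$ rigid implies $c\otimes-\simeq \ihom(c^\vee,-)$. One small stylistic choice is whether to phrase the forward direction via the faithfulness of $\hat{(-)}$ on $\Hom_{\cat{T}}(x,-)$ for $x\in\cat{T}^c$ or instead to invoke Lemma~\ref{lem:phantom} only after reducing to the compact case; the argument above takes the latter route since Lemma~\ref{lem:phantom} is already stated and used in the proof.
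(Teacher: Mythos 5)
Your proof is correct, and while your backward direction coincides with the paper's (reduce to a composite $x\to X\to c$ between compacts and invoke Lemma~\ref{lem:phantom}), your forward direction takes a genuinely different route. The paper applies (a $c$-relative version of) Proposition~\ref{prop:E-tens} with $\cat{B}=0$: the pure-injective envelope $c\to E(c)$ sits in a triangle $\Sigma\inv W(c)\oto{p} c\to E(c)\to W(c)$ in which $p$ is the weakly universal phantom with target $c$ and $E(c)\otimes p=0$; any phantom $f\colon X\to c$ factors through $p$, so $E(c)\otimes f=0$. You instead argue entirely from Proposition~\ref{prop:phantom}: $E(c)\otimes f$ is phantom by part~\eqref{it:phantom-a}, its target $E(c)\otimes c\cong\ihom(c^\vee,E(c))$ is pure-injective by part~\eqref{it:phantom-c} and rigidity of~$c$, and a phantom into a pure-injective vanishes by Remark~\ref{rem:pure-inj}. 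Your version has the advantage of not needing to transplant Construction~\ref{cons:E} and Proposition~\ref{prop:E-tens} from $\unit$ to an arbitrary compact $c$ (a step the paper leaves implicit), and it isolates the pure-injectivity of $E(c)\otimes c$ as the real reason the implication holds; the paper's version, in exchange, exhibits the weakly universal phantom explicitly, which is the structural fact it wants on record for the surrounding discussion.
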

\begin{proof}
We first show that if $f$ is phantom then $E(c)$ witnesses this. Consider Proposition~\ref{prop:E-tens} for $\cat{B} = 0$. We have an exact triangle
\begin{displaymath}
\xymatrix{
\Sigma\inv W(c) \ar[r]^-p & c \ar[r] & E(c) \ar[r] & W(c)
}
\end{displaymath}
where $c\to E(c)$ is a pure-injective envelope of $c$ and $p$ is the (weakly) universal phantom with target~$c$. The proposition tells us that $E(c)\otimes p = 0$. The statement then follows as any phantom $f\colon X\to c$ factors via $p$.

Now we show that if $f\otimes E(c)$ is zero then $f$ is a phantom. To this end, suppose $f\otimes E(c)=0$ but $f$ is not phantom. Then there is an $x\in \cat T^c$ and a non-vanishing composite
\[
\xymatrix{
x \ar[r]^-g & X \ar[r]^-f & c.
}
\]
Applying $-\otimes E(c)$ we see that $fg\otimes E(c)=0$, since $f\otimes E(c)=0$, but this contradicts Lemma~\ref{lem:phantom}.
\end{proof}

\goodbreak
\section{Maximal Serre tensor ideals}
\label{se:max}%
\medbreak

We now want to isolate interesting Serre $\otimes$-ideals of~$\cat{B}\subset\mT=\cat{A}\fp$, to which we can apply the constructions of the previous sections.

\begin{Prop}
\label{prop:max}%
There exists a Serre $\otimes$-ideal $\cat{B}$ in~$\cat{A}\fp$ which is maximal with respect to inclusion among those which do not intersect~$\SET{\hat c}{c\in\cat{T}^c,\ c\neq0}$.
\end{Prop}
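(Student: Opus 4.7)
The plan is to apply Zorn's lemma to the poset of Serre $\otimes$-ideals in $\cat{A}\fp$ that meet the set $S:=\SET{\hat c}{c\in\cat{T}^c,\ c\neq 0}$ trivially, ordered by inclusion. This poset is non-empty because the zero ideal clearly qualifies (no $\hat c$ with $c\neq 0$ is zero in $\cat{A}\fp$, since $\yoneda\colon\cat{T}^c\hook \mT$ is fully faithful). So the only real task is verifying the hypothesis of Zorn, namely that every chain in this poset has an upper bound.

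Given a chain $\{\cat{B}_\alpha\}_\alpha$ of such Serre $\otimes$-ideals, the natural candidate for an upper bound is the union $\cat{B}:=\bigcup_\alpha \cat{B}_\alpha$, interpreted as the full subcategory on all objects lying in some $\cat{B}_\alpha$. I would then check each of the closure conditions in turn, with each verification reducing to a finite-character argument that picks a single sufficiently large member of the chain:
\begin{itemize}
\item Closure under subobjects and quotients is immediate: any subobject or quotient of $M\in\cat{B}_\alpha$ lies in $\cat{B}_\alpha$, hence in $\cat{B}$.
\item For extensions, given an exact sequence $0\to M'\to M\to M''\to 0$ with $M',M''\in\cat{B}$, pick indices $\alpha',\alpha''$ witnessing this; as $\{\cat{B}_\alpha\}$ is totally ordered, both objects lie in a common $\cat{B}_\alpha$, which is Serre, so $M\in\cat{B}_\alpha\subseteq\cat{B}$.
\item For the $\otimes$-ideal condition, by Lemma~\ref{lem:id-Tc} it suffices to check closure under tensoring with $\hat x$ for $x\in\cat{T}^c$, and this is inherited object-by-object from the $\cat{B}_\alpha$.
\end{itemize}
(Stability under suspension is automatic by Convention~\ref{conv:Sigma} and Lemma~\ref{lem:compacttensor}.) Hence $\cat{B}$ is a Serre $\otimes$-ideal in $\cat{A}\fp$.

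Finally, I would verify that $\cat{B}$ still avoids $S$: if $\hat c\in\cat{B}$ for some non-zero $c\in\cat{T}^c$, then by definition of the union $\hat c\in\cat{B}_\alpha$ for some $\alpha$, contradicting the defining property of the chain. Thus $\cat{B}$ is an upper bound inside the poset, Zorn's lemma applies, and a maximal element exists.

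There is no genuine obstacle here; the argument is purely formal Zorn's-lemma bookkeeping, and the only thing to keep an eye on is that each closure axiom really has finite character so that union of a chain suffices (as opposed to needing a more elaborate directed colimit). Since Serre-ness, the $\otimes$-ideal condition (tested on finitely many $\hat x$-factors via Lemma~\ref{lem:id-Tc}), and avoidance of the set $S$ (tested one object at a time) are all of this form, the proof goes through cleanly.
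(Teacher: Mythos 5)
Your proposal is correct and is exactly the argument the paper gives (the paper compresses it to one sentence: apply Zorn, noting that the union of a tower of Serre $\otimes$-ideals avoiding a given class of objects still avoids it). Your expanded verification that each closure condition has finite character, and the use of Lemma~\ref{lem:id-Tc} for the $\otimes$-ideal condition, are the right bookkeeping details.
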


\begin{proof}
This is immediate by Zorn since the union of a tower of Serre $\otimes$-ideals which do not meet a given class of objects still has this property.
\end{proof}

\begin{Def}
\label{def:max}%
We call $\cat{B}\subset \cat{A}\fp$ a \emph{$\cat{T}^c$-maximal Serre $\otimes$-ideal} if it is maximal among those such that $\cat{B}\cap\yoneda(\cat{T}^c)=0$, as in Proposition~\ref{prop:max}.
\end{Def}

\begin{Rem}
\label{rem:nonunique}%
We have used Zorn's lemma to guarantee the existence of \emph{a} $\cat{T}^c$-maximal Serre $\otimes$-ideal, but of course it need not be unique. We will prove, see Theorem~\ref{thm:unique} and Corollary~\ref{cor:unique}, that under certain circumstances there is a unique such ideal. However, this is not the general expectation: see Remark~\ref{rem:manyfields}.
\end{Rem}

\begin{Def}
\label{def:local}%
Recall from~\cite[\S\,4]{Balmer10b} that the rigid tt-category $\cat{T}^c$ is called \emph{local} if $x\otimes y=0$ forces $x=0$ or $y=0$. Conceptually, it means that the spectrum~$\Spc(\cat{T}^c)$ is a local topological space. It has a unique closed point, namely~$\gm=(0)$. By extension, we shall say that $\cat{T}$ is \emph{local} when $\cat{T}^c$ is. Note that $\cat{T}$ itself will almost never satisfy the property that $X\otimes Y=0\implies X=0\textrm{ or }Y=0$, as one can easily see with Rickard idempotents for instance.
\end{Def}

\begin{Rem}
\label{rem:local}%
To every point $\cat P\in\Spc(\cat{T}^c)$ of the tt-spectrum of~$\cat{T}^c$ we can associate a \emph{local} category $\cat{T}_{\cat P}:=\cat{T}/\Loc(\cat{P})$. Its rigid-compacts $(\cat{T}_{\cat{P}})^c$ coincide, by a result of Neeman~\cite{Neeman92b}, with the idempotent-completion of the Verdier localization~$\cat{T}^c/\cat P$. This construction extends the algebro-geometric one, in the sense that if we start with the derived category $\cat{T}=\Der(X):=\Der_{\Qcoh}(\calO_X)$ of a quasi-compact and quasi-separated scheme, so that $\cat{T}^c=\Dperf(X)$ is the derived category of perfect complexes, and if $\cat P=\cat P(x)$ is the prime corresponding to a point $x\in X$ under the homeomorphism $X\simeq \Spc(\Dperf(X))$, then the local category $\cat{T}_{\cat P}$ is naturally equivalent to the derived category~$\Der(\calO_{X,x})$ over the local ring~$\calO_{X,x}$ at~$x$.
\end{Rem}

\begin{Hyp}
\label{hyp:loc-max}%
Assume $\cat{T}^c$ local and $\cat{B}\subset\cat{A}\fp$ a $\cat{T}^c$-maximal Serre $\otimes$-ideal.
\end{Hyp}

\begin{Prop}
\label{prop:max-saturated}%
Under Hypothesis~\ref{hyp:loc-max}, let $M\in \cat{A}\fp$ be such that $M\otimes \hat d\in \cat{B}$ for some non-zero $d\in\cat{T}^c$. Then we have $M\in\cat{B}$.
\end{Prop}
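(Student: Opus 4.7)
The approach I would take is to argue by contradiction using the maximality of $\cat{B}$. Consider the Serre $\otimes$-ideal $\cat{B}'\subseteq\cat{A}\fp$ generated by $\cat{B}$ together with the single object $M$. If I can show $\cat{B}'\cap\yoneda(\cat{T}^c)\subseteq\{0\}$, then Definition~\ref{def:max} and the $\cat{T}^c$-maximality of $\cat{B}$ force $\cat{B}'=\cat{B}$, and in particular $M\in\cat{B}$, which is the desired conclusion.

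To control $\cat{B}'$ without unpacking its generators, I would introduce the auxiliary class
\[
\mathcal{C}:=\{\,N\in\cat{A}\fp \mid N\otimes\hat d\in\cat{B}\,\}
\]
and check that $\mathcal{C}$ is itself a Serre $\otimes$-ideal containing $\cat{B}$ (because $\cat{B}$ is a $\otimes$-ideal, so tensoring with $\hat d$ keeps us inside $\cat{B}$) and containing $M$ (by the standing hypothesis $M\otimes\hat d\in\cat{B}$). Consequently $\cat{B}'\subseteq\mathcal{C}$. The only point requiring care is closure of $\mathcal{C}$ under subobjects, quotients and extensions, which follows from exactness of $-\otimes\hat d$; this exactness is the flatness of $\hat d$ from Remark~\ref{rem:Day-1} (indeed $\hat d$ is even projective since $d\in\cat{T}^c$). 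Closure of $\mathcal{C}$ under $\otimes$ with objects of $\cat{A}\fp$ is then formal from associativity of $\otimes$ together with the $\otimes$-ideal property of $\cat{B}$.

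With this in hand, suppose some non-zero $c\in\cat{T}^c$ has $\hat c\in\cat{B}'\subseteq\mathcal{C}$. Then $\widehat{c\otimes d}\cong\hat c\otimes\hat d$ lies in $\cat{B}$, and the defining property of a $\cat{T}^c$-maximal ideal (Definition~\ref{def:max}) forces $\widehat{c\otimes d}=0$, hence $c\otimes d=0$ by faithfulness of the Yoneda embedding on $\cat{T}^c$. Since $d\neq 0$ is given and $\cat{T}^c$ is local (Definition~\ref{def:local}), this contradicts $c\neq 0$. So $\cat{B}'$ meets $\yoneda(\cat{T}^c)$ only in zero, completing the argument.

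I do not expect a serious technical obstacle here: the substantive content is packaged into the definition of $\mathcal{C}$, which converts the $\otimes$-ideal presentation of $\cat{B}'$ into a single, easily checked containment criterion. The only step worth double-checking is that $\mathcal{C}$ really is a Serre $\otimes$-ideal, but this reduces to exactness of $-\otimes\hat d$ (flatness of $\hat d$) and the Serre $\otimes$-ideal structure of $\cat{B}$, both of which are available.
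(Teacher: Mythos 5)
Your argument is correct and is essentially the paper's proof: both construct the auxiliary Serre $\otimes$-ideal of objects $N$ with $N\otimes\hat d\in\cat{B}$ (the paper quantifies over all non-zero $d$, you fix the given one), verify it via flatness of $\hat d$, use locality of $\cat{T}^c$ to see it still avoids the non-zero representables, and conclude by maximality. Your intermediate ideal $\cat{B}'$ generated by $\cat{B}$ and $M$ is superfluous---you could apply maximality to $\mathcal{C}$ directly---but this is only a cosmetic difference.
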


\begin{proof}
Consider $\cat{B}':=\SET{M\in \cat{A}\fp}{M\otimes \hat d\in \cat{B},\textrm{ for some non-zero }d\in\cat{T}^c}$. Since every $\hat d$ is flat in~$\cat{A}\fp$ (Remark~\ref{rem:Day-1}), it is easy to verify that $\cat{B}'$ is a Serre $\otimes$-ideal\textemdash{}this is immediate from the corresponding properties for~$\cat{B}$. Since~$\cat{T}$ is local, we know that for $\hat c \neq 0$ the object $\hat c\otimes \hat d\cong \widehat{c\otimes d}$ remains non-zero, and so $\cat{B}'$ still avoids $\SET{\hat c}{c\in \cat{T}^c,\ c\neq 0}$. We then conclude by maximality of~$\cat{B}$ that $\cat{B}=\cat{B}'$.
\end{proof}

\begin{Lem}
\label{lem:max-rigid}%
Under Hypothesis~\ref{hyp:loc-max}, let $c\in\cat{T}^c$ be non-zero. Consider
\[
 \coev_c\colon\unit\to c^\vee\otimes c
\qquadtext{and}
\ev_c\colon c\otimes c^\vee\to \unit
\]
the unit and the counit of the $\otimes\adj\ihom$ adjunction, where $c^\vee=\ihom(c,\unit)$ denotes the dual in~$\cat{T}^c$ as before. Then the images of these morphisms in~$\bat{A}\fp=\cat{A}\fp/\cat{B}$ are a monomorphism $\bar \unit \into \overline{c^\vee}\otimes\bar c$ and an epimorphism $\bar c\otimes\overline{c^\vee}\onto\bar \unit $, respectively.
\end{Lem}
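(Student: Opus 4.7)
The plan is to test monomorphisms and epimorphisms in the Serre quotient $\bat{A}\fp = \cat{A}\fp/\cat{B}$ via the standard criterion: a map $\bar f$ in $\bat{A}\fp$ is monic (resp.\ epic) if and only if $\Ker(\hat f) \in \cat{B}$ (resp.\ $\coker(\hat f) \in \cat{B}$), where these kernels and cokernels are taken in the abelian category $\cat{A}\fp$. The latter is abelian since $\cat{A}$ is locally coherent, and $\overrightarrow{\cat{B}}\cap \cat{A}\fp = \cat{B}$, so these tests really do detect inclusion in the original~$\cat{B}$. Combined with flatness of~$\hat c$ (Remark~\ref{rem:Day-1}), the triangle identities for the rigid object~$c$, and Proposition~\ref{prop:max-saturated}, this handles both claims uniformly.

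For the statement about $\coev_c$, I would set $M := \Ker\bigl(\hat{\coev_c}\colon \hat\unit \to \widehat{c^\vee \otimes c}\bigr)$ in $\cat{A}\fp$. Since $\hat c$ is flat, tensoring the exact sequence $0 \to M \to \hat\unit \to \widehat{c^\vee \otimes c}$ with $\hat c$ identifies $M \otimes \hat c$ with $\Ker\bigl(\widehat{1_c \otimes \coev_c}\colon \hat c \to \widehat{c \otimes c^\vee \otimes c}\bigr)$. The triangle identity $(\ev_c \otimes 1_c)\circ(1_c \otimes \coev_c)=1_c$ in~$\cat{T}^c$ exhibits $1_c \otimes \coev_c$ as a split monomorphism, and therefore so does $\widehat{1_c \otimes \coev_c}$ in $\cat{A}\fp$. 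Thus $M \otimes \hat c = 0$, and Proposition~\ref{prop:max-saturated} applied with $d = c \neq 0$ forces $M \in \cat{B}$. This proves $\overline{\coev_c}$ is a monomorphism in $\bat{A}\fp$.

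For $\ev_c$, symmetrically, I would set $N := \coker\bigl(\hat{\ev_c}\colon \widehat{c \otimes c^\vee} \to \hat\unit\bigr)$ in $\cat{A}\fp$ and tensor with $\hat c$. Right-exactness of $-\otimes \hat c$ presents $N \otimes \hat c$ as the cokernel of $\widehat{\ev_c \otimes 1_c}$. The same triangle identity makes $\ev_c \otimes 1_c$ a split epimorphism in~$\cat{T}^c$, so $N \otimes \hat c = 0$ and Proposition~\ref{prop:max-saturated} again yields $N \in \cat{B}$, i.e., $\overline{\ev_c}$ is epi.

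The main (mild) obstacle is bookkeeping with the Serre-quotient mono/epi test at the finitely presented level: I must verify that the kernel and cokernel of $\hat{\coev_c}$ and $\hat{\ev_c}$ computed in $\cat{A}\fp$ agree with those computed in $\cat{A}$ (local coherence) and that inclusion in $\overrightarrow{\cat{B}}$ for a finitely presented object is equivalent to inclusion in $\cat{B}$ itself. Once these standard facts from Proposition~\ref{prop:A/B} and Appendix~\ref{app:Grothendieck} are in place, the triangle identities for the rigid~$c$ together with Proposition~\ref{prop:max-saturated} deliver the result almost formally.
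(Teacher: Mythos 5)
Your proof is correct and follows essentially the same route as the paper's: both arguments rest on the triangle identities making $1_c\otimes\coev_c$ a split monomorphism and $\ev_c\otimes 1_c$ a split epimorphism, flatness of (the image of) $\hat c$, and Proposition~\ref{prop:max-saturated}. The only difference is that you compute the kernel and cokernel upstairs in $\cat{A}\fp$ and then pass to the quotient, whereas the paper works directly with $\Ker(\overline{\coev_c})$ in $\bat{A}\fp$ using flatness of $\bar c$ there; since $Q$ is exact these are interchangeable.
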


\begin{proof}
By the unit-counit relations, the morphism $1_{\bar c}\otimes\overline{\coev_c}$ is a split monomorphism (already in~$\cat{T}^c)$. Hence, by flatness of~$\bar c$, we have $\bar c\otimes\Ker(\overline{\coev_c})=0$. By Proposition~\ref{prop:max-saturated} we deduce $\Ker(\overline{\coev_c})=0$. The other one is similar.
\end{proof}

\begin{Prop}
\label{prop:all-generate}%
Under Hypothesis~\ref{hyp:loc-max}, we have:
\begin{enumerate}[\rm(a)]
\smallbreak
\item
The homological $\otimes$-functor $\boneda\colon\cat{T}\to \bat{A}=\MT/\overrightarrow{\cat{B}}$ is conservative on compacts, that is, every non-zero $c\in\cat{T}^c$ has non-zero image $\bar c\neq 0$ in~$\bat{A}\fp$, or equivalently it detects isomorphisms between compact objects.
\smallbreak
\item
For every non-zero $c\in \cat{T}^c$, the Serre $\otimes$-ideal $\ideal{\bar c}$ generated by~$\bar c$ in~$\bat{A}\fp$ is the whole $\bat{A}\fp$.
\smallbreak
\item
If $c\in\cat{T}^c$ is non-zero and $f\colon X\to Y$ in~$\cat{T}$ is such that $c\otimes f=0$, then $\bar f=0$.
\end{enumerate}
\end{Prop}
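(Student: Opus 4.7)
The three parts are progressively stronger assertions, and each can be reduced to material already assembled in Sections \ref{se:MT} and \ref{se:max}.

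\emph{Part (a).} The key fact is that, by Proposition~\ref{prop:A/B}, the quotient functor $Q\colon\cat{A}\fp\to\bat{A}\fp=\cat{A}\fp/\cat{B}$ has kernel exactly $\cat{B}$ on finitely presented objects. Thus for $c\in\cat{T}^c$, vanishing $\bar c=Q(\hat c)=0$ is equivalent to $\hat c\in\cat{B}$. But $\cat{B}$ is $\cat{T}^c$-maximal, which by Definition~\ref{def:max} means $\cat{B}\cap\yoneda(\cat{T}^c)=0$, so $\hat c\in\cat{B}$ forces $c=0$. The equivalent formulation (``detects isomorphisms'') then follows by applying this to the cone of any morphism in $\cat{T}^c$, since $\boneda$ is homological.

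\emph{Part (b).} Here I would exploit the coevaluation $\coev_c\colon\unit\to c^\vee\otimes c$ in the rigid category $\cat{T}^c$ together with Lemma~\ref{lem:max-rigid}. That lemma tells us the image $\overline{\coev_c}\colon\bar\unit\into\overline{c^\vee}\otimes\bar c$ is a monomorphism in $\bat{A}\fp$. Since the Serre $\otimes$-ideal $\ideal{\bar c}$ contains $\overline{c^\vee}\otimes\bar c$, it also contains the subobject $\bar\unit$. A Serre $\otimes$-ideal of $\bat{A}\fp$ containing the $\otimes$-unit automatically equals the whole category, since $M\cong M\otimes\bar\unit$ lies in it for any $M\in\bat{A}\fp$. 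Hence $\ideal{\bar c}=\bat{A}\fp$.

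\emph{Part (c).} This is the main content and I expect it to require the most care, though all ingredients are now in place. Since $\yoneda$ is monoidal and $c$ is compact-rigid, the identity $\hat c\otimes\hat f\cong\widehat{c\otimes f}$ (a case of Lemma~\ref{lem:compacttensor}) shows that $c\otimes f=0$ in $\cat{T}$ implies $\hat c\otimes\hat f=0$ in $\cat{A}$, and applying the monoidal exact functor $Q$ gives $\bar c\otimes\bar f=0$ in $\bat{A}$. To pass from annihilation by $\bar c$ to annihilation of $\bar f$ itself, I would invoke the ``square'' Lemma~\ref{lem:square} applied to the two maps
\[
\bar f\colon\bar X\to\bar Y
\qquadtext{and}
\overline{\coev_c}\colon\bar\unit\to\overline{c^\vee}\otimes\bar c.
\]
The hypothesis $\bar Y\otimes\overline{\coev_c}$ being a monomorphism follows from Lemma~\ref{lem:max-rigid} together with flatness of $\bar Y$ from Corollary~\ref{cor:A/B}\,\eqref{it:bar-flat}. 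The other hypothesis $\bar f\otimes(\overline{c^\vee}\otimes\bar c)=0$ is immediate from $\bar c\otimes\bar f=0$ by symmetry of the tensor. Lemma~\ref{lem:square} then yields $\bar f\otimes\bar\unit=0$, i.e.\ $\bar f=0$.

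\textbf{Main obstacle.} The only nontrivial step is (c), and all its difficulty is encapsulated in Lemma~\ref{lem:max-rigid}: without the monomorphism statement for $\overline{\coev_c}$ there is no way to cancel $\bar c$. Once that lemma is available, the argument is essentially a single application of the square lemma.
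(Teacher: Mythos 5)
Your proof is correct and follows essentially the same route as the paper: (a) from $\hat c\notin\cat{B}$ by $\cat{T}^c$-maximality, (b) from the monomorphism $\bar\unit\into\overline{c^\vee}\otimes\bar c$ of Lemma~\ref{lem:max-rigid}, and (c) by applying Lemma~\ref{lem:square} to $\bar f$ and $\overline{\coev_c}$ using that lemma together with flatness of $\bar Y$. No gaps.
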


\begin{proof}
Part~(a) is immediate from $\hat c\notin\cat{B}$ for all non-zero $c\in\cat{T}^c$. Detection of isomorphism then follows by applying the homological functor $c\mapsto \bar c$ to the cone of a morphism in~$\cat{T}^c$. Part~(b) is immediate from Lemma~\ref{lem:max-rigid} and so is~(c) by an application of Lemma~\ref{lem:square} to $\bar f$ and $\coev_c \colon \unit \to c^\vee\otimes c$, using that $\overline{\coev_c}\otimes 1$ is a monomorphism by Lemma~\ref{lem:max-rigid} and flatness of~$\bar Y$.
\end{proof}

From this we deduce that although we only assumed~$\cat{B}$ maximal among those subcategories meeting $\cat{T}^c$ trivially, it is automatically plain maximal in~$\cat{A}\fp$.

\begin{Cor}
\label{cor:all-generate}%
Under Hypothesis~\ref{hyp:loc-max}, we have:
\begin{enumerate}[\rm(a)]
\item
The subcategory $\cat{B}$ is a maximal proper Serre $\otimes$-ideal of~$\cat{A}\fp$.
\item
The only Serre $\otimes$-ideals of~$\bat{A}\fp$ are zero and~$\bat{A}\fp$.
\item
Every non-zero object of $\bat{A}\fp$ generates $\bat{A}\fp$ as a Serre $\otimes$-ideal, and generates $\bat{A}$ as a localizing $\otimes$-ideal.
\end{enumerate}
\end{Cor}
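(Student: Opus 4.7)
The plan is to bootstrap off Proposition~\ref{prop:all-generate}(b) through the standard Gabriel correspondence between Serre $\otimes$-ideals of $\cat{A}\fp$ containing $\cat{B}$ and Serre $\otimes$-ideals of $\bat{A}\fp=\cat{A}\fp/\cat{B}$; the monoidal compatibility is ensured by Proposition~\ref{prop:A/B}(c), which makes $Q$ a monoidal exact quotient. All three parts then fall out of a single observation.

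The substance is in part (b). I start with a non-zero Serre $\otimes$-ideal $\cat{S}\subseteq\bat{A}\fp$. By Proposition~\ref{prop:A/B}(a) every non-zero object of $\bat{A}\fp$ lifts to some $N\in\cat{A}\fp\setminus\cat{B}$, so the preimage $Q\inv(\cat{S})\subseteq\cat{A}\fp$ is a Serre $\otimes$-ideal strictly containing $\cat{B}$. The $\cat{T}^c$-maximality of $\cat{B}$ (Definition~\ref{def:max}) then forces $Q\inv(\cat{S})\cap \yoneda(\cat{T}^c)\neq 0$, producing a non-zero $c\in\cat{T}^c$ with $\bar c = Q(\hat c)\in\cat{S}$. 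Proposition~\ref{prop:all-generate}(b) concludes that $\cat{S}\supseteq\ideal{\bar c}=\bat{A}\fp$, so $\cat{S}=\bat{A}\fp$.

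Part (a) then follows at once from (b) via the Gabriel correspondence: a strict containment $\cat{B}\subsetneq\cat{B}'\subseteq\cat{A}\fp$ of Serre $\otimes$-ideals would descend to a non-zero proper Serre $\otimes$-ideal in $\bat{A}\fp$, contradicting (b). For (c), any non-zero $M\in\bat{A}\fp$ generates a non-zero Serre $\otimes$-ideal, which equals $\bat{A}\fp$ by (b); the localizing $\otimes$-ideal of $\bat{A}$ generated by $M$ thus contains all of $\bat{A}\fp$, and since by local coherence (Theorem~\ref{thm:summary}(a)) the finitely presented objects generate $\bat{A}$ as a localizing subcategory, this localizing $\otimes$-ideal exhausts $\bat{A}$. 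I do not anticipate a genuine obstacle: the whole statement is a formal repackaging of Proposition~\ref{prop:all-generate}(b) together with the maximality of $\cat{B}$, and the only care needed is the bookkeeping of the Gabriel correspondence and the distinction, in (c), between generation inside $\bat{A}\fp$ versus inside $\bat{A}$.
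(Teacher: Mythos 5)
Your proof is correct and is essentially the paper's argument: both rest on the Gabriel correspondence between Serre $\otimes$-ideals of $\cat{A}\fp$ containing $\cat{B}$ and those of $\bat{A}\fp$, the $\cat{T}^c$-maximality of $\cat{B}$ forcing a non-zero $\hat c$ into any strictly larger ideal, and Proposition~\ref{prop:all-generate}(b) to conclude. The only nit is that for part (c) you should cite Proposition~\ref{prop:A/B}(a) (which gives $\bat{A}=\overrightarrow{\bat{A}\fp}$) rather than Theorem~\ref{thm:summary}(a), since the latter is the summary theorem whose proof relies on this very corollary.
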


\begin{proof}
  Let $\cat{B}\subsetneq\cat{C}\subseteq\cat{A}\fp$ be a Serre
  $\otimes$-ideal and $\bat{C}=Q(\cat{C})\neq 0$ the corresponding
  Serre $\otimes$-ideal of the quotient~$\bat{A}\fp$. By maximality
  of~$\cat{B}$ among the Serre $\otimes$-ideals of~$\cat{A}\fp$
  avoiding $\SET{\hat c}{c\in\cat{T}^c,\ c\neq0}$
  (Definition~\ref{def:max}), there exists $c\in\cat{T}^c$ non-zero
  such that $\hat c\in \cat C$, that is, $\bar c\in\bat C$. We
  conclude by Proposition~\ref{prop:all-generate}\,(b) that $\bat{C}$
  is the whole category~$\bat{A}\fp$. Hence $\cat{C}=Q\inv(\bat{C})$
  is the whole of~$\cat{A}\fp$. Therefore $\overrightarrow{\bat{C}}$
  is the whole of~$\overrightarrow{\bat{A}\fp}=\bat{A}$ and all the statements
  follow.
\end{proof}

We now have the following consequences:
\begin{Cor}
\label{cor:bar-0}%
Under Hypothesis~\ref{hyp:loc-max}, let $X\in\cat{T}$ and $c\in\cat{T}^c$ non-zero such that $X\otimes c=0$. Then $\bar X=0$ in~$\bat{A}$.
\qed
\end{Cor}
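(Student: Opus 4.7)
The plan is to reduce this statement to Proposition~\ref{prop:all-generate}\,(c) by specializing to the identity morphism. The hypothesis $X\otimes c=0$ packages cleanly as the vanishing of $c\otimes \id_X$, since $c\otimes \id_X=\id_{c\otimes X}$ and an identity morphism vanishes precisely when its source/target is zero.

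Concretely, I would set $f=\id_X\colon X\to X$ and observe $c\otimes f = \id_{c\otimes X} = 0$ by hypothesis. Proposition~\ref{prop:all-generate}\,(c) then yields $\bar f=0$ in $\bat{A}$. Since $\boneda\colon\cat{T}\to\bat{A}$ is a functor, $\bar f = \bar{\id_X} = \id_{\bar X}$, so $\id_{\bar X}=0$, which forces $\bar X=0$ in $\bat{A}$.

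There is essentially no obstacle: the work has already been done in Proposition~\ref{prop:all-generate}. If one preferred a more direct route that bypasses part~(c), an alternative argument is the following. Since $\boneda$ is strong monoidal we have $\bar X\otimes \bar c = 0$ in $\bat{A}$. By Lemma~\ref{lem:max-rigid}, the coevaluation induces a monomorphism $\bar\unit \into \overline{c^\vee}\otimes \bar c$. Tensoring with $\bar X$ preserves this monomorphism because $\bar X$ is flat by Corollary~\ref{cor:A/B}\,(b), yet the target becomes $\overline{c^\vee}\otimes(\bar X\otimes \bar c)=0$, forcing $\bar X\cong \bar X\otimes \bar\unit=0$. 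Either argument is a two-line consequence of what is already established.
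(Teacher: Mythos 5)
Your proposal is correct and matches the paper's intent: the corollary is stated with no written proof precisely because it is the specialization of Proposition~\ref{prop:all-generate}\,(c) to $f=\id_X$, exactly as you argue. Your alternative argument via Lemma~\ref{lem:max-rigid} and flatness is also valid, being essentially the proof of that proposition unwound in this special case.
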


Another upshot is that if there is more than one choice for~$\cat{B}$ then the corresponding pure-injectives interact in the way one would expect of field objects.
\begin{Cor}
\label{cor:fieldy}%
Suppose we are in the situation of Hypothesis~\ref{hyp:loc-max}, with $\cat{B}_1$ and $\cat{B}_2$ distinct, $\cat{T}^c$-maximal Serre $\otimes$-ideals. Let $E_i=E(\cat{B}_i)$ be the corresponding pure-injectives (Construction~\ref{cons:E}). Then $E_1\otimes E_2 \cong 0$.
\end{Cor}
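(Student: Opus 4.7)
The plan is to reduce the desired vanishing $E_1\otimes E_2=0$ to the vanishing of the image of $\hat E_2$ in the quotient $\bat{A}_1:=\cat{A}/\overrightarrow{\cat{B}_1}$, by applying Corollary~\ref{cor:E-tens} with $\cat{B}=\cat{B}_1$ and $X=E_2$: the latter gives $E_1\otimes E_2=0$ if and only if $Q_1(\hat E_2)=0$ in $\bat{A}_1$, where $Q_1\colon \cat{A}\onto\bat{A}_1$ denotes the quotient functor. So it suffices to prove $Q_1(\hat E_2)=0$.

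First, I would exploit the distinctness of the ideals. By Corollary~\ref{cor:all-generate}(a), both $\cat{B}_1$ and $\cat{B}_2$ are maximal proper Serre $\otimes$-ideals of $\cat{A}\fp$, hence incomparable when distinct. Choose $M\in\cat{B}_2\setminus\cat{B}_1$. Two key facts about $M$ are immediate. On the one hand, $M\in\cat{B}_2\subseteq\overrightarrow{\cat{B}_2}=\Ker(\hat E_2\otimes-)$ by Theorem~\ref{thm:E-tens}, so $M\otimes \hat E_2=0$ in $\cat{A}$; applying the monoidal exact quotient $Q_1$ then yields
\[
Q_1(M)\otimes Q_1(\hat E_2)=0 \quad\text{in } \bat{A}_1.
\]
On the other hand, $M\notin\cat{B}_1=\cat{A}\fp\cap\overrightarrow{\cat{B}_1}$ (using Proposition~\ref{prop:A/B}(a)), so $Q_1(M)$ is a \emph{non-zero} finitely presented object of $\bat{A}_1$.

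Next, I would invoke the smallness of the quotient. By Corollary~\ref{cor:all-generate}(c), the non-zero finitely presented object $Q_1(M)$ generates $\bat{A}_1$ as a localizing $\otimes$-ideal. I would then verify that the full subcategory
\[
\Ker\bigl(-\otimes Q_1(\hat E_2)\bigr)\subseteq\bat{A}_1
\]
is itself a localizing $\otimes$-ideal: it is a $\otimes$-ideal by associativity of the tensor, closed under colimits since the tensor on $\bat{A}_1$ is colimit-preserving in each variable (Proposition~\ref{prop:A/B}(c)), and closed under sub- and quotient objects and extensions because $Q_1(\hat E_2)$ is flat in $\bat{A}_1$ by Corollary~\ref{cor:A/B}(b). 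By the previous paragraph, this localizing $\otimes$-ideal contains $Q_1(M)$, hence equals the whole of $\bat{A}_1$. Applying it to the unit yields
\[
Q_1(\hat E_2)\cong \bar\unit_{\bat{A}_1}\otimes Q_1(\hat E_2)=0,
\]
which, by the reduction in the first paragraph, gives $E_1\otimes E_2=0$.

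There is no serious obstacle here; the only thing to be careful about is the asymmetric bookkeeping (we work in $\bat{A}_1$ but exploit $M\in\cat{B}_2$), and the verification that flatness of $Q_1(\hat E_2)$ really does make $\Ker(-\otimes Q_1(\hat E_2))$ closed under subobjects as well as quotients. The conceptual point is that $\bat{A}_1$ is so small (every non-zero finitely presented object generates it as a localizing $\otimes$-ideal) that one instance of annihilation of a non-zero finitely presented object forces $Q_1(\hat E_2)$ itself to vanish.
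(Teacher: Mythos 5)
Your proof is correct; every step checks out. The reduction to $Q_1(\hat E_2)=0$ via Corollary~\ref{cor:E-tens} is legitimate (that corollary is stated for an arbitrary Serre $\otimes$-ideal, so applying it to $\cat{B}_1$ with $X=E_2$ is fine), distinct maximal proper Serre $\otimes$-ideals are indeed incomparable so $M\in\cat{B}_2\setminus\cat{B}_1$ exists, Theorem~\ref{thm:E-tens} gives $M\otimes\hat E_2=0$, and the generation argument in $\bat{A}_1$ works because $Q_1(\hat E_2)$ is flat (so $\Ker(-\otimes Q_1(\hat E_2))$ is a localizing Serre $\otimes$-ideal) and Corollary~\ref{cor:all-generate}(c) applies to the non-zero finitely presented object $Q_1(M)$.

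The paper's own proof is shorter and symmetric, and never leaves $\cat{A}$: since the $\hat E_i$ are flat, $\Ker(\hat E_1\otimes\hat E_2\otimes-)$ is a Serre $\otimes$-ideal, and by Theorem~\ref{thm:E-tens} it contains both $\cat{B}_1$ and $\cat{B}_2$; as these are \emph{distinct} maximal proper Serre $\otimes$-ideals of $\cat{A}\fp$ (Corollary~\ref{cor:all-generate}(a)), any Serre $\otimes$-ideal containing both must be improper, hence contains $\hat\unit$, forcing $\hat E_1\otimes\hat E_2=0$ and then $E_1\otimes E_2=0$ by conservativity of restricted Yoneda. Your version is the same underlying mechanism viewed through the quotient $\bat{A}_1$: the statement that every non-zero finitely presented object generates $\bat{A}_1$ as a localizing $\otimes$-ideal is precisely the downstairs shadow of maximality of $\cat{B}_1$, and your witness $M$ plays the role that distinctness plays upstairs. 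What you lose is symmetry and a few lines; what you gain is that the argument makes explicit where distinctness enters and packages the final conservativity step inside Corollary~\ref{cor:E-tens} rather than invoking it separately. Both are perfectly valid.
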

\begin{proof}
 Recall from Theorem~\ref{thm:E-tens} that we have equalities $\overrightarrow{\cat{B}_i} = \Ker(\hat{E}_i \otimes -)$ in~$\cat{A}$. Since the $\hat{E}_i$ are flat, the $\otimes$-ideal $\Ker(\hat E_1\otimes \hat E_2\otimes-)$ is Serre, and it contains both $\cat{B}_1$ and $\cat{B}_2$. But the subcategories $\cat{B}_i$ are maximal Serre $\otimes$-ideals by Corollary~\ref{cor:all-generate}. By the assumption that $\cat{B}_1$ and $\cat{B}_2$ are distinct we deduce that $\Ker(\hat{E}_1 \otimes \hat{E}_2 \otimes -)$ cannot be a proper Serre $\otimes$-ideal of~$\cat{A}\fp$. Thus it contains $\hat{\unit}$ forcing $E_1\otimes E_2 \cong 0$.
\end{proof}
\begin{Rem}
The corollary tells us that if $\cat{B}$ is a $\cat{T}^c$-maximal Serre $\otimes$-ideal with associated pure-injective $E$ then $\Ker(E\otimes -)$ seems quite large, at least relatively speaking; it contains the pure-injective associated to any other choice of $\cat{T}^c$-maximal Serre $\otimes$-ideal. It is then natural to wonder if $\Ker(E\otimes -)$ satisfies some sort of maximality property itself.
\end{Rem}
\begin{Cor}
\label{cor:bar-f_Y}%
Under Hypothesis~\ref{hyp:loc-max}, let $Y\subseteq\Spc(\cat{T}^c)$ be a Thomason subset (\eg\ a closed subset with quasi-compact complement) and assume $Y$ non-empty. Consider the idempotent triangle $\bbe_Y\to \unit\to \bbf_Y\to \Sigma \bbe_Y$ associated to~$Y$; see~\cite{BalmerFavi11}. Then $\overline{\bbf_Y}=0$ and $\overline{\bbe_Y}\cong\bar\unit$.
\end{Cor}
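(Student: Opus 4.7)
The plan is to prove the two assertions in order, first $\overline{\bbf_Y}=0$ and then $\overline{\bbe_Y}\cong\bar\unit$ as a consequence via the homological long exact sequence.

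For the first assertion, recall from~\cite{BalmerFavi11} that the Thomason subset $Y$ corresponds to the thick $\otimes$-ideal $\cat{T}^c_Y=\SET{c\in\cat{T}^c}{\supp(c)\subseteq Y}$, and that the defining property of the idempotent $\bbf_Y$ is that $\bbf_Y\otimes c=0$ for every $c\in\cat{T}^c_Y$. Since $Y$ is non-empty, this thick ideal is non-zero and contains some non-zero compact object~$c\in\cat{T}^c$. Applying Corollary~\ref{cor:bar-0} (which precisely allows us to conclude $\bar X=0$ from $X\otimes c=0$ for a non-zero compact~$c$, under Hypothesis~\ref{hyp:loc-max}) with $X=\bbf_Y$, we deduce that $\overline{\bbf_Y}=0$ in~$\bat{A}$.

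For the second assertion, apply the homological coproduct-preserving functor $\boneda\colon\cat{T}\to\bat{A}$ of Corollary~\ref{cor:A/B}\,\eqref{it:boneda} to the exact triangle $\bbe_Y\to\unit\to\bbf_Y\to\Sigma\bbe_Y$. This produces a long exact sequence in~$\bat{A}$ containing the four-term piece
\[
\overline{\Sigma^{-1}\bbf_Y} \to \overline{\bbe_Y} \to \bar\unit \to \overline{\bbf_Y}.
\]
Since $\Sigma$ is an automorphism of~$\bat{A}$ compatible with $\boneda$ (Remark~\ref{rem:Sigma}), the vanishing of $\overline{\bbf_Y}$ forces $\overline{\Sigma^{-1}\bbf_Y}=\Sigma^{-1}\overline{\bbf_Y}=0$ as well. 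Hence the middle map $\overline{\bbe_Y}\to\bar\unit$ is an isomorphism, as claimed.

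There is no real obstacle here: the only ingredient beyond the results already established in Sections~\ref{se:MT}-\ref{se:max} is the standard characterization of $\bbf_Y$ as killing the thick ideal~$\cat{T}^c_Y$. The mild point worth flagging is the need to know that~$Y$ non-empty implies $\cat{T}^c_Y\neq 0$, which is standard since every non-empty Thomason subset of $\Spc(\cat{T}^c)$ is the support of some non-zero compact object.
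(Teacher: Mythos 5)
Your proof is correct and follows the paper's argument exactly: pick a non-zero compact $c$ with $\supp(c)\subseteq Y$, note $c\otimes\bbf_Y=0$, and invoke Corollary~\ref{cor:bar-0} to get $\overline{\bbf_Y}=0$. The paper leaves the deduction of $\overline{\bbe_Y}\cong\bar\unit$ implicit, and your long-exact-sequence argument is the intended (and correct) way to fill it in.
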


\begin{proof}
Since $Y\neq\varnothing$, there exists a non-zero $c\in\cat{T}^c$ with $\supp(c)\subseteq Y$, that is, $c\otimes \bbf_Y=0$. We conclude by Corollary~\ref{cor:bar-0} for $X=\bbf_Y$.
\end{proof}

\begin{Rem}
The last corollary shows that $\boneda\colon X\mapsto \bar X$ is only conservative on compacts, as in Proposition~\ref{prop:all-generate}\,(a), but not on all objects. It also shows that~$\boneda$ can send non-compact objects of~$\cat{T}$ to finitely presented ones in~$\bat{A}$.
\end{Rem}

\begin{Rem}
Proposition~\ref{prop:all-generate}\,(b) and Corollaries~\ref{cor:bar-0} and~\ref{cor:bar-f_Y} indicate that $\bat{A}$ is somewhat ``small''. For instance, if the open complement of the closed point~$Y=\{(0)\}\subset\Spc(\cat{T}^c)$ is quasi-compact (\eg\ if $\Spc(\cat{T}^c)$ is noetherian) then every $X\in\cat{T}$ has the same image $\bar X=\overline{\bbe_{0}\otimes X}$ as the object $\bbe_{0}\otimes X$ which belongs to the localizing subcategory $\cat{T}_{(0)}$ of~$\cat{T}$ generated by the minimal non-zero tt-ideal~$\cat{T}^c_{(0)}$ of~$\cat{T}^c$.
\end{Rem}

Another indication of the smallness of~$\bat{A}$ is the following:

\begin{Thm}
\label{thm:end-nil}%
Under Hypothesis~\ref{hyp:loc-max}, the endomorphism ring $\Endcat{C}(\bar\unit)$ of the $\otimes$-unit in~$\bat{A}:=\cat{A}/\overrightarrow{\cat{B}}$ (\ie in~$\bat{A}\fp$) is a commutative local ring such that every element of its maximal ideal is nilpotent. In particular, it has Krull dimension zero.
\end{Thm}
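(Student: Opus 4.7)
The plan is to treat commutativity and locality separately, then deduce Krull dimension zero from the latter. Commutativity of $R:=\End_{\bat{A}}(\bar\unit)$ is the classical Eckmann--Hilton argument for the endomorphism ring of the unit of a (symmetric) monoidal category. Given commutativity, it suffices to prove that every $f\in R$ which is not invertible is nilpotent: then the set of non-units coincides with the nilradical, which is an ideal and is visibly the unique maximal ideal. Krull dimension zero is then immediate, since every prime both contains and is contained in the maximal ideal, here equal to the nilradical.

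So fix $f\in R$. I would introduce the telescope
\[
L \;:=\; \colim\bigl(\bar\unit\xrightarrow{f}\bar\unit\xrightarrow{f}\bar\unit\xrightarrow{f}\cdots\bigr)\;\in\;\bat{A}
\]
and observe that $L=0$ if and only if $f$ is nilpotent. Indeed, $\bar\unit$ is finitely presented by Theorem~\ref{thm:summary}(a), so $\Hom_{\bat{A}}(\bar\unit,L)=\colim_n R = R[f^{-1}]$, which vanishes iff $f$ is nilpotent; conversely, if $f^n=0$ then the cocone identity $\iota_m=\iota_{m+n}\circ f^n=0$ makes every structure map to $L$ zero, forcing $L=0$.

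Assume now that $f$ is not nilpotent, so $L\neq 0$; we show $f$ is invertible. The unit $\bar\unit$ is flat in $\bat{A}$ (tensoring with it is the identity functor), and filtered colimits of flat objects in a Grothendieck category are flat, so $L$ is flat. Moreover, since $L$ is the standard telescope inverting $f$, the endomorphism $f\otimes L\colon L\to L$ is an isomorphism (it is identified with the evident shift-isomorphism of the diagram). Consider the localizing $\otimes$-ideal $\Ker(L\otimes-)\subseteq\bat{A}$: its intersection with $\bat{A}\fp$ is a Serre $\otimes$-ideal of $\bat{A}\fp$, hence by Corollary~\ref{cor:all-generate}(b) equals $0$ or all of $\bat{A}\fp$. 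The latter is excluded since $L\otimes\bar\unit\cong L\neq 0$. Therefore $L\otimes X\neq 0$ for every non-zero $X\in\bat{A}\fp$.

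To finish, apply the exact functor $L\otimes-$ to the four-term exact sequence
\[
0\to\Ker f\to\bar\unit\xrightarrow{f}\bar\unit\to\coker f\to 0,
\]
noting that $\Ker f$ and $\coker f$ are finitely presented because $\bar\unit$ is fp and $\bat{A}$ is locally coherent. Since $f\otimes L$ is an isomorphism, both $L\otimes\Ker f$ and $L\otimes\coker f$ vanish; by the previous paragraph this forces $\Ker f=0=\coker f$, so $f$ is an isomorphism. The main obstacle is the construction of $L$ with the two crucial properties of flatness and tensor-detection of non-zero fp objects: flatness reduces to the triviality that the unit is flat, while tensor-detection rests squarely on the maximal Serre $\otimes$-ideal property from Corollary~\ref{cor:all-generate}(b). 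Without this simplicity of $\bat{A}\fp$ the nilpotence argument collapses.
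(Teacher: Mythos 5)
Your proof is correct, and it takes a genuinely different route from the paper's. The paper argues the implication directly: for a non-invertible $f$ it introduces the Serre $\otimes$-ideal $\Nil(f)$ of objects $M$ with $M\otimes f\potimes{n}=0$ for some $n>0$ (Lemma~\ref{lem:nil}), shows by a diagram chase that $\Ker(f)$ and $\coker(f)$ lie in $\Nil(f)$, and then invokes Corollary~\ref{cor:all-generate} to get $\Nil(f)=\bat{A}\fp\ni\bar\unit$, whence $f\potimes{n}=f^{n}=0$. You instead prove the contrapositive by forming the telescope $L$ inverting $f$ in the big category $\bat{A}$: finite presentation of $\bar\unit$ identifies $\Hom_{\bat{A}}(\bar\unit,L)$ with the localization of $\End_{\bat{A}}(\bar\unit)$ at $f$, so $L\neq0$ when $f$ is not nilpotent; flatness of $L$ (an AB5 filtered colimit of copies of the flat unit) makes $\Ker(L\otimes-)\cap\bat{A}\fp$ a Serre $\otimes$-ideal, which Corollary~\ref{cor:all-generate}(b) forces to be zero since $L\otimes\bar\unit\cong L\neq0$; and invertibility of $f\otimes L$ applied to the four-term sequence then kills $\Ker(f)$ and $\coker(f)$, which are finitely presented by local coherence. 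Both arguments ultimately rest on the same consequence of maximality of $\cat{B}$, namely Corollary~\ref{cor:all-generate}, but yours trades the paper's finitely presented bookkeeping (Lemma~\ref{lem:nil} and its two-out-of-three argument) for the standard commutative-algebra device of inverting $f$ by a filtered colimit. The paper's version stays entirely inside $\bat{A}\fp$ and exhibits the nilpotence exponent more directly; yours is arguably more conceptual, at the cost of leaving the finitely presented world. All the auxiliary facts you invoke (Eckmann--Hilton for commutativity, flatness of filtered colimits of flats, preservation of finite presentation by kernels and cokernels) are indeed available under the standing hypotheses.
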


The following lemma allows us to convert this into a problem about Serre $\otimes$-ideals, which we are by now well equipped to handle.

\begin{Lem}
\label{lem:nil}%
Let $\cat{C}$ be an abelian $\otimes$-category, with $\otimes$ right exact. Let $f\colon X\to Y$ be a morphism in~$\cat{C}$ with~$Y$ flat. Then
\[
\Nil(f):=\SET{M\in\cat{C}}{\exists\, n>0,\textrm{ s.t.\ }M\otimes f\potimes{n}\colon M\otimes X\potimes{n}\to M\otimes Y\potimes{n}\textrm{ is zero}}
\]
is a Serre $\otimes$-ideal of~$\cat{C}$.
\end{Lem}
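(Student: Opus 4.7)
The plan is to verify the three Serre conditions (closure under subobjects, quotients and extensions) and then the tensor-ideal condition. The tensor-ideal property is the easiest: if $M \otimes f^{\otimes n} = 0$ and $N \in \cat{C}$ is arbitrary, then by bifunctoriality $(N \otimes M) \otimes f^{\otimes n} \cong N \otimes (M \otimes f^{\otimes n}) = 0$, so $N \otimes M \in \Nil(f)$ with the same exponent.

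For closure under subobjects, suppose $M' \hookrightarrow M$ and $M \otimes f^{\otimes n} = 0$. Since $Y$ is flat, so is $Y^{\otimes n}$, hence the map $M' \otimes Y^{\otimes n} \hookrightarrow M \otimes Y^{\otimes n}$ is monic. The commutative square with horizontal maps $M' \otimes f^{\otimes n}$ and $M \otimes f^{\otimes n}$ then forces $M' \otimes f^{\otimes n} = 0$. For closure under quotients, given $M \twoheadrightarrow M''$, right exactness of $\otimes$ makes $M \otimes X^{\otimes n} \twoheadrightarrow M'' \otimes X^{\otimes n}$ an epimorphism; the analogous commutative square shows that $M'' \otimes f^{\otimes n} \circ (\text{epi}) = 0$, hence $M'' \otimes f^{\otimes n} = 0$.

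The genuinely interesting step is closure under extensions, and I expect this to be the main obstacle. Suppose $0 \to M' \to M \to M'' \to 0$ is exact with $M' \otimes f^{\otimes a} = 0$ and $M'' \otimes f^{\otimes b} = 0$; I claim $M \otimes f^{\otimes (a+b)} = 0$. Tensoring the exact sequence with the flat object $Y^{\otimes b}$ yields a short exact sequence
\[
0 \to M' \otimes Y^{\otimes b} \to M \otimes Y^{\otimes b} \to M'' \otimes Y^{\otimes b} \to 0.
\]
Post-composing $M \otimes f^{\otimes b}$ with the projection to $M'' \otimes Y^{\otimes b}$ equals the composite $M \otimes X^{\otimes b} \to M'' \otimes X^{\otimes b} \xrightarrow{M'' \otimes f^{\otimes b}} M'' \otimes Y^{\otimes b}$, which vanishes. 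Thus $M \otimes f^{\otimes b}$ factors through the monomorphism $M' \otimes Y^{\otimes b} \hookrightarrow M \otimes Y^{\otimes b}$, say as $\iota \circ g$.

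Using the decomposition $f^{\otimes (a+b)} = f^{\otimes b} \otimes f^{\otimes a}$ and bifunctoriality,
\[
M \otimes f^{\otimes (a+b)} = (M \otimes Y^{\otimes b} \otimes f^{\otimes a}) \circ (M \otimes f^{\otimes b} \otimes X^{\otimes a}) = (\iota \otimes Y^{\otimes a}) \circ (M' \otimes Y^{\otimes b} \otimes f^{\otimes a}) \circ (g \otimes X^{\otimes a}).
\]
The middle factor $M' \otimes Y^{\otimes b} \otimes f^{\otimes a}$ is identified, via the symmetry of $\otimes$, with $(M' \otimes f^{\otimes a}) \otimes Y^{\otimes b} = 0$, so the whole composite vanishes, giving $M \in \Nil(f)$ with exponent $a + b$. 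Finally, closure under tensoring in the other direction (so that $\Nil(f)$ is a two-sided, hence genuine, tensor-ideal) is automatic in the symmetric setting. Combining these four steps proves that $\Nil(f)$ is a Serre $\otimes$-ideal.
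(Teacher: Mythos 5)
Your proof is correct and follows essentially the same route as the paper's: flatness of $Y$ gives closure under subobjects, right-exactness gives closure under quotients, and for extensions one factors $M\otimes f^{\otimes b}$ through the flat-tensored subobject $M'\otimes Y^{\otimes b}$ and then composes with $f^{\otimes a}$, using the symmetry to move the spare $Y^{\otimes b}$ factor ``into the middle.'' The only cosmetic difference is that the paper first replaces $f$ by a common power so that $M'\otimes f = M''\otimes f = 0$ and then kills $M\otimes f^{\otimes 2}$ by composing two factorizations (one through the cokernel $M''\otimes X$, one through the kernel $M'\otimes Y$) whose composite dies because $h\circ g=0$, whereas you keep the exponents $a$ and $b$ separate and invoke $M'\otimes f^{\otimes a}=0$ directly after the single factorization.
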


\begin{proof}
It is easy to check that $\Nil(f)$ is $\otimes$-ideal and stable by quotients and subobjects (the latter uses that $Y$ is flat).
Consider an exact sequence $M'\overset{g}\into M\overset{h}\onto M''$ with $M',M''\in\Nil(f)$. Replacing $f$ by $f\potimes{n}$ for $n$ large enough, we can as well assume that $M'\otimes f=0$ and $M''\otimes f=0$. It suffices to show that $M\otimes f\potimes{2}=0$. Use flatness of~$Y$ to obtain the exact rows of the following commutative (plain) diagram, in which we replace $M'\otimes f$ and $M''\otimes f$ by zero:
\[
\xymatrix{
& M'\otimes X \ar[r]^-{g\otimes 1} \ar[d]_-{0}
& M\otimes X \ar[r]^-{h\otimes 1} \ar[d]^-{1\otimes f} \ar@{-->}[ld]^-{k}
& M''\otimes X \ar[r] \ar[d]^-{0} \ar@{-->}[ld]^(.4){\ell}
& 0
\\
0 \ar[r]
& M'\otimes Y \ar[r]_-{g\otimes 1}
& M\otimes Y \ar[r]_-{h\otimes 1}
& M''\otimes Y \ar[r]
& 0
}
\]
The vanishing of the diagonal in the left-hand square gives the existence of a morphism~$\ell$ such that $1\otimes f=\ell\circ (h\otimes 1)$ and similarly, the right-hand square gives the existence of~$k$ such that $1\otimes f=(g\otimes 1)\circ k$. Then we can tensor the first relation by~$Y$ on the right and the second one by~$X$ ``in the middle'' (meaning on the right and then swap the last two factors) to get the following two commuting triangles:
\[
\xymatrix@C=2em{
&&& M\otimes X\otimes X \ar[d]^-{1\otimes 1\otimes f} \ar[lld]_{(23)(k\otimes 1)(23)\ }
\\
& M'\otimes X\otimes Y \ar[rr]_-{g\otimes 1\otimes 1}
&& M\otimes X\otimes Y \ar[rr]^-{h\otimes 1\otimes 1} \ar[d]^-{1\otimes f\otimes 1}
&& M''\otimes X\otimes Y \ar[lld]^-{\ell\otimes1}
\\
&&& M\otimes Y \otimes Y
}
\]
From $hg=0$, it follows that the vertical composite $1_M\otimes f\otimes f$ is zero, as claimed.
\end{proof}

\begin{proof}[Proof of Theorem~\ref{thm:end-nil}]
All the statements are immediate consequences of the fact that every non-invertible element of~$\End_{\bat{A}}(\bar\unit)$ is nilpotent. Let $f\colon\bar\unit\to \bar\unit$ be non-invertible, so that either $\Ker(f)$ or~$\coker(f)$ is non-zero. From the commutative diagram
\[
\xymatrix@C=3em{
0 \ar[r]
& \Ker(f) \ar[r]^-{i} \ar[d]_-{1\otimes f} \ar@/^1em/[rd]_-{=0}
& \bar\unit \ar[r]^-{f} \ar[d]^-{f}
& \bar\unit \ar[r]^-{p} \ar[d]_-{f}
& \coker(f) \ar[r] \ar[d]_-{1\otimes f}
& 0
\\
0 \ar[r]
& \Ker(f) \ar[r]^-{i}
& \bar\unit \ar[r]^-{f}
& \bar\unit \ar[r]^-{p}
& \coker(f) \ar[r]
& 0
}
\]
one easily gets that $1_{\Ker(f)}\otimes f=0$ and $1_{\coker(f)}\otimes f=0$. In other words, $\Ker(f),\coker(f)$ belong to the Serre $\otimes$-ideal~$\Nil(f)$ of Lemma~\ref{lem:nil}. As one of them is non-zero and they both belong to~$\bat{A}\fp$, we know by Corollary~\ref{cor:all-generate} that they will generate the whole category $\bat{A}\fp$ as a Serre $\otimes$-ideal. Consequently $\Nil(f)=\bat{A}\fp\ni\bar\unit$ which means that $f\potimes{n}=0$ for some~$n>0$. As $\bar\unit$ is the unit in the symmetric monoidal category $\bat{A}$, composition in $\End_{\bat{A}}(\bar\unit)$ and tensoring endomorphisms coincide, which shows every non-invertible element is nilpotent as promised.
\end{proof}

\begin{Rem}
The ring~$\End_{\bat{A}}(\bar\unit)$ is local and receives the local ring $\Endcat{T}(\unit)$ via a ring homomorphism, which is a \emph{local} ring homomorphism since $c\mapsto \bar c$ is conservative (Proposition~\ref{prop:all-generate}\,(a)). Consequently, the image of the unique prime in~$\End_{\bat{A}}(\bar\unit)$ is the maximal ideal of~$\Endcat{T}(\unit)$, \ie the closed point of its Zariski spectrum.
\end{Rem}

\begin{center}
*\ *\ *
\end{center}
We conclude this section with a discussion of uniqueness of the $\cat{T}^c$-maximal Serre $\otimes$-ideal~$\cat{B}\subset \cat{A}\fp$ (Definition~\ref{def:max}) under the assumption that $\cat{T}$ is local (Definition~\ref{def:local}) and the closed point of $\SpcT$ is visible.

\begin{Hyp}
\label{hyp:min}%
Suppose that $\cat{T}$ is local, so the zero $\otimes$-ideal $(0)$ is prime in~$\cat{T}^c$. In addition we assume that $\gm=(0)$ is \emph{visible} in the sense of~\cite{BalmerFavi11}, meaning that the punctured spectrum $\Spc \cat{T}^c \setminus \{\gm\}$ is a \emph{quasi-compact} open. (This is automatic if the space~$\SpcT$ is noetherian.) Then the closed subset consisting of just the point $\gm$ corresponds to a thick $\otimes$-ideal $\cat{T}^c_{\gm}$ whose non-zero objects have support precisely~$\{\gm\}$. This is the minimal non-zero tt-ideal of~$\cat{T}^c$. We shall denote by
\[
\cat{T}_{\gm} := \Loc (\cat{T}^c_{\gm})
\]
the localizing $\otimes$-ideal of~$\cat{T}$ generated by $\cat{T}^c_{\gm}$.
\end{Hyp}

\begin{Rem}
Consider the idempotent triangle in~$\cat{T}$ in the sense of~\cite{BalmerFavi11}
\[
\bbe_{\gm} \to \unit \to \bbf_{\gm} \to \Sigma \bbe_{\gm}
\]
corresponding to the closed Thomason~$\{\gm\}\subset \Spc(\cat{T}^c)$. Recall that this exact triangle is characterized by the properties that $\cat{T}_{\gm}=\bbe_{\gm}\otimes\cat{T}=\Ker(\bbf_{\gm}\otimes-)$ and $\cat{T}/\cat{T}_{\gm} \cong \cat{T}_{\gm}^{\perp}=\bbf_{\gm}\otimes\cat{T}=\Ker(\bbe_{\gm}\otimes-)$.
\end{Rem}
\begin{Lem}
\label{lem:E-minimal}%
Let $\cat{B}$ be a $\cat{T}^c$-maximal Serre $\otimes$-ideal and let $E=E(\cat{B})$ be the corresponding pure-injective object of~$\cat{T}$ (Construction~\ref{cons:E}). Then $E$ lies in $\cat{T}_{\gm}$.
\end{Lem}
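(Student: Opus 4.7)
The strategy is to show that $\bbf_{\gm}\otimes E=0$, which by the characterization $\cat{T}_{\gm}=\Ker(\bbf_{\gm}\otimes-)$ recalled above immediately places $E$ in $\cat{T}_{\gm}$. So the whole task reduces to identifying $E$ with an object killed by $\bbf_{\gm}$.

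First, I would verify that the closed point $\{\gm\}$ is itself a Thomason subset of $\Spc(\cat{T}^c)$: by Hypothesis~\ref{hyp:min} its complement, the punctured spectrum, is quasi-compact open, so $\{\gm\}$ is closed with quasi-compact open complement, hence Thomason. This is the only place the visibility assumption is used. With $Y=\{\gm\}$ non-empty and Thomason, Corollary~\ref{cor:bar-f_Y} applies, giving $\overline{\bbf_{\gm}}=0$ in $\bat{A}$.

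Next, I would invoke Corollary~\ref{cor:E-tens}, which says exactly that for any $X\in\cat{T}$, the vanishing $\bar X=0$ in $\bat{A}$ is equivalent to $E\otimes X=0$ in $\cat{T}$. Applied to $X=\bbf_{\gm}$, the previous step gives $E\otimes\bbf_{\gm}=0$, i.e.\ $E\in\Ker(\bbf_{\gm}\otimes-)=\bbe_{\gm}\otimes\cat{T}=\cat{T}_{\gm}$, as required.

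There is really no obstacle here: the statement is a direct combination of the ``smallness'' results already established in Section~\ref{se:max}, the main content being hidden in Corollaries~\ref{cor:bar-f_Y} and~\ref{cor:E-tens}. The role of Hypothesis~\ref{hyp:min} is solely to guarantee that the singleton $\{\gm\}$ is Thomason, so that the idempotent triangle $\bbe_{\gm}\to\unit\to\bbf_{\gm}$ exists in the form needed to detect membership in $\cat{T}_{\gm}$.
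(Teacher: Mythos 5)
Your proof is correct and is essentially identical to the paper's: both deduce $\overline{\bbf_{\gm}}=0$ from Corollary~\ref{cor:bar-f_Y} and then apply Corollary~\ref{cor:E-tens} to $X=\bbf_{\gm}$ to conclude $E\otimes\bbf_{\gm}=0$, hence $E\in\cat{T}_{\gm}$. Your extra remark that Hypothesis~\ref{hyp:min} is what makes $\{\gm\}$ Thomason is a correct and welcome clarification, but the argument is otherwise the same.
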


\begin{proof}
We saw in Corollary~\ref{cor:bar-f_Y} that $\bar\bbf_{\gm}=0$. It follows from Corollary~\ref{cor:E-tens} applied to the object~$X=\bbf_{\gm}$ that $E\otimes \bbf_{\gm}=0$, meaning $E\in\Ker(\bbf_{\gm}\otimes-)=\cat{T}_{\gm}$.
\end{proof}
\begin{Thm}
\label{thm:unique}%
Suppose that $\cat{T}$ is local with localizing $\otimes$-ideal~$\cat{T}_{\gm}$ at the closed point as in Hypothesis~\ref{hyp:min}. Let $\cat{B}$ be a $\cat{T}^c$-maximal Serre $\otimes$-ideal and $E=E(\cat{B})$ the associated pure-injective as in Construction~\ref{cons:E}. If
\begin{displaymath}
\{X\in \cat{T}_{\gm} \; \vert \; E\otimes X \cong 0\}
\end{displaymath}
contains no non-zero pure-injective, for instance if it is trivial, then $\cat{B}$ is the unique $\cat{T}^c$-maximal Serre $\otimes$-ideal of~$\mT$.
\end{Thm}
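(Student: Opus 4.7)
The plan is a short argument by contradiction that combines three ingredients already established in the paper: the ``field-like'' orthogonality of pure-injectives associated to distinct maximal ideals (Corollary~\ref{cor:fieldy}), the fact that such pure-injectives lie in the local part $\cat{T}_\gm$ (Lemma~\ref{lem:E-minimal}), and the nontriviality of $E(\cat{B}')$ for any proper Serre $\otimes$-ideal $\cat{B}'$.

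Suppose, towards a contradiction, that $\cat{B}' \neq \cat{B}$ is another $\cat{T}^c$-maximal Serre $\otimes$-ideal, and let $E' := E(\cat{B}')$ be the pure-injective of Construction~\ref{cons:E}. First I would invoke Corollary~\ref{cor:fieldy} to conclude $E \otimes E' \cong 0$. Next, Lemma~\ref{lem:E-minimal} (which uses exactly Hypothesis~\ref{hyp:min}) gives $E' \in \cat{T}_\gm$. So $E'$ belongs to the subcategory $\{X \in \cat{T}_\gm \mid E \otimes X \cong 0\}$ appearing in the statement, and the conclusion will follow as soon as we verify that $E'$ is non-zero.

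For the nonvanishing of $E'$: since $\cat{B}'$ is a \emph{proper} Serre $\otimes$-ideal (it misses $\hat\unit$), the unit $\bar\unit'$ is nonzero in $\bat{A}' = \cat{A}/\overrightarrow{\cat{B}'}$, and hence its injective envelope $\bar E'$ is nonzero. By Corollary~\ref{cor:A/B}\,\eqref{it:inj-bar} we have $\hat{E'} \cong R(\bar E')$, and since the right adjoint $R$ to the Gabriel quotient is fully faithful (Proposition~\ref{prop:A/B}), $\hat{E'} \neq 0$. Conservativity of restricted-Yoneda then forces $E' \neq 0$ in $\cat{T}$.

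Putting everything together, $E'$ is a non-zero pure-injective lying in $\cat{T}_\gm$ and satisfying $E \otimes E' = 0$, contradicting the hypothesis on $E$. Hence no such $\cat{B}'$ exists and $\cat{B}$ is unique. I do not expect any serious obstacle: the genuine work has already been done in Corollary~\ref{cor:fieldy} and Lemma~\ref{lem:E-minimal}; the only subtlety worth flagging is recording the fully-faithfulness of $R$ (equivalently, $QR \cong \Id$) to ensure that the construction $\cat{B}' \mapsto E(\cat{B}')$ is injective on proper Serre $\otimes$-ideals.
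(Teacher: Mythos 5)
Your proof is correct and follows essentially the same route as the paper: assume a second $\cat{T}^c$-maximal ideal $\cat{B}'$ exists, apply Corollary~\ref{cor:fieldy} to get $E\otimes E'\cong 0$ and Lemma~\ref{lem:E-minimal} to place $E'$ in $\cat{T}_\gm$, and derive a contradiction. Your explicit verification that $E'\neq 0$ (via $\bar\unit\into\bar E'$ and $QR\cong\Id$) is a detail the paper leaves implicit, and it is carried out correctly.
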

\begin{proof}
Suppose $\cat{B}'$ were another, distinct, $\cat{T}^c$-maximal Serre $\otimes$-ideal with associated pure-injective $E'$. By Lemma~\ref{lem:E-minimal} we know that $E'$ lies in $\cat{T}_{\gm}$ and by Corollary~\ref{cor:fieldy} we know that $E\otimes E' = 0$. But this would contradict our hypothesis and so such a $\cat{B}'$ cannot exist.
\end{proof}

We now exhibit a situation that (likely due to our lack of general knowledge) seems to occur frequently and in which we can apply the theorem. Recall that $\cat{T}_{\gm}$ is said to be \emph{minimal} if it contains no proper non-trivial localizing $\otimes$-ideal. In other words every non-zero object of $\cat{T}_{\gm}$ generates it as a localizing $\otimes$-ideal.

\begin{Exa}\label{ex:class}
When the localizing $\otimes$-ideals of~$\cat{T}$ are classified by the subsets of~$\SpcT$, then it is clear that $\cat{T}_{\gm}$ is minimal, since the corresponding subset of~$\SpcT$ is the one-point closed subset~$\{\gm\}$.
\end{Exa}

\begin{Rem}
\label{rem:min}%
Minimality of~$\cat{T}_{\gm}$ implies that if $E_1,E_2\in\cat{T}_{\gm}$ satisfy $E_1\otimes E_2=0$ then $E_1=0$ or $E_2=0$. Indeed, if $E_2\neq0$, we see that $\cat{T}_{\gm}\cap \Ker(E_1\otimes-)$ is a non-zero (it contains~$E_2$) localizing $\otimes$-ideal contained in~$\cat{T}_{\gm}$, hence it must be equal to it by minimality. This gives $\bbe_{\gm}\in\cat{T}_{\gm}=\Ker(E_1\otimes-)$ and $E_1\simeq \bbe_{\gm}\otimes E_1\simeq 0$.
\end{Rem}

\begin{Cor}
\label{cor:unique}%
Suppose that $\cat{T}$ is local as in Hypothesis~\ref{hyp:min}, with minimal localizing $\otimes$-ideal~$\cat{T}_{\gm}$. Then there is a unique $\cat{T}^c$-maximal Serre $\otimes$-ideal $\cat{B}$ in~$\mT$.
\end{Cor}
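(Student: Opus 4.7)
The plan is simply to verify the hypothesis of Theorem~\ref{thm:unique}, using the minimality of~$\cat{T}_{\gm}$ as formalized in Remark~\ref{rem:min}.

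Let $\cat{B}$ be a $\cat{T}^c$-maximal Serre $\otimes$-ideal, which exists by Proposition~\ref{prop:max}, and let $E=E(\cat{B})$ be the associated pure-injective of Construction~\ref{cons:E}. First I would observe that $E$ is non-zero: the unit $\unit\in\cat{T}^c$ is non-zero by non-triviality, so $\hat\unit\notin\cat{B}$ by Definition~\ref{def:max}, whence $\bar\unit\neq 0$ in~$\bat{A}$; as $\bar\unit\into\bar E$ is a monomorphism~\eqref{eq:bar-E}, we get $\bar E\neq 0$ and therefore $E\neq 0$ in~$\cat{T}$. Next, Lemma~\ref{lem:E-minimal} places $E$ inside the localizing $\otimes$-ideal~$\cat{T}_{\gm}$.

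Now I would apply Remark~\ref{rem:min}, which exploits the assumed minimality of~$\cat{T}_{\gm}$: for any $X\in \cat{T}_{\gm}$ with $E\otimes X\cong 0$, the non-vanishing of~$E$ forces $X=0$. Thus the subcategory
\[
\{X\in \cat{T}_{\gm}\mid E\otimes X\cong 0\}
\]
is trivial, and in particular contains no non-zero pure-injective. Theorem~\ref{thm:unique} then gives that $\cat{B}$ is the unique $\cat{T}^c$-maximal Serre $\otimes$-ideal in~$\mT$.

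There is no real obstacle here; the work has all been done upstream, in Lemma~\ref{lem:E-minimal}, Corollary~\ref{cor:fieldy}, and the bookkeeping of Remark~\ref{rem:min}. The one point to keep straight is the non-vanishing of~$E$, which follows from $\cat{B}$ being a \emph{proper} Serre $\otimes$-ideal (guaranteed by $\hat\unit\notin\cat{B}$), so that the injective envelope $\bar\unit\into\bar E$ in the non-trivial Grothendieck category~$\bat{A}$ has non-zero target.
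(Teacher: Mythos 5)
Your proposal is correct and follows the paper's own argument exactly: existence from Proposition~\ref{prop:max}, then verification of the hypothesis of Theorem~\ref{thm:unique} via Lemma~\ref{lem:E-minimal} and Remark~\ref{rem:min}. You merely spell out two points the paper leaves implicit (that $E\neq 0$ because $\cat{B}$ is proper, and that $E\in\cat{T}_{\gm}$ is needed before invoking Remark~\ref{rem:min}), which is a reasonable bit of added care rather than a different route.
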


\begin{proof}
By Proposition~\ref{prop:max} we know such a $\cat{B}$ exists. By Remark~\ref{rem:min} we know tensoring with the corresponding pure-injective kills no non-zero object of $\cat{T}_{\gm}$. Combining these two facts we can apply Theorem~\ref{thm:unique} to conclude that $\cat{B}$ is unique as claimed.
\end{proof}

As alluded to in Example~\ref{ex:class}, the corollary applies to a number of situations: in all of the cases where we have a classification of localizing $\otimes$-ideals it arises, more or less, by showing that $\cat{T}_{\gm}$ is minimal. Let us give a concrete example.

\begin{Exa}
\label{ex:min1}%
Let $R$ be a commutative noetherian local ring with residue field $k$ and consider $\cat{T} = \mathsf{D}(R)$. Then, using Neeman's classification \cite{Neeman92a} we have that
\begin{displaymath}
\cat{T}^c_{\gm} = \{X \in \mathsf{D}^\mathrm{perf}(R) \mid \oplus_i H^i(X) \text{ has finite length}\},
\end{displaymath}
$\cat{T}_{\gm} = \Loc(k)$ the localizing subcategory generated by the residue field, and this latter subcategory is minimal. Thus Corollary~\ref{cor:unique} applies and there is a unique $\cat{T}^c$-maximal Serre $\otimes$-ideal. We note that, even in Krull dimension $1$, computing the homological residue field $\bat{A}$, as in Theorem~\ref{thm:summary}, is non-trivial.
\end{Exa}

As one would hope tt-fields also admit unique homological residue fields.

\begin{Exa}
\label{ex:min2}%
Let $\cat{F}$ be a tt-field. Then $\Spc \cat{F}^c$ is a point by Proposition~\ref{prop:spcF} and one can moreover show that $\cat{F}_{\gm} = \cat{F}$ is minimal; it follows that $0$ is the unique $\cat{F}^c$-maximal Serre $\otimes$-ideal (see Theorem~\ref{thm:field}). In particular, $\bat{A} = \MF$ verifies the hypotheses of Theorem~\ref{thm:summary}.
\end{Exa}

\begin{Rem}\label{rem:manyfields}
All of this suggests that, in situations where the closed point $\gm\in\SpcT$ has quasi-compact complement but $\cat{T}_{\gm}$ is \emph{not} minimal, it could be instructive to examine the collection of all $\cat{T}^c$-maximal Serre ideals. The naive guess is that, given there is a unique homological residue field when $\cat{T}_{\gm}$ is minimal, the structure of the collection of $\cat{T}^c$-maximal Serre ideals may measure failure of minimality. One could, somewhat speculatively, hope to augment the information coming from the spectrum by this set of subcategories, i.e.\ to consider $\Spc \cat{T}^c$ together with some additional data at each visible point, and use this to try to classify localizing ideals.
\end{Rem}

\goodbreak
\section{Examples and properties of tt-fields}
\label{se:field}%
\medbreak

This section should help readers build an intuition of what tt-fields should be. We point out why some naive guesses are not appropriate solutions.

We start with some examples coming from modular representation theory, which seem quite different from `classical' fields but nonetheless should be admitted as tt-fields.
\begin{Prop}
\label{prop:C_p}%
Let $p$ be a prime, $C_p$ the cyclic group with~$p$ elements and $\kk$ a field of characteristic~$p$. Let $\cat{F}=\Stab(\kk C_p)$ be the stable module category of $\kk C_p$-modules modulo projectives. Then we have:
\begin{enumerate}[\rm(a)]
\item
Any object of~$\Stab(\kk C_p)$ is a coproduct of compact ones.
\smallbreak
\item
Any non-zero compact object of~$\Stab(\kk C_p)$ is \tff.
\smallbreak
\item
Any coproduct-preserving tt-functor $F\colon\cat{F} \to \cat{S}$ into a `big' tt-category (Hypothesis~\ref{hyp:big}) is faithful.
\smallbreak
\item
Every object of $\cat{F}$ is pure-injective i.e.\ $\cat{F}$ is pure-semisimple (cf.\ Theorem~\ref{thm:semisimple}). Moreover, every indecomposable object of $\cat{F}$ is endofinite.
\end{enumerate}
\end{Prop}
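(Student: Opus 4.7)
The plan is to exploit the fact that $\kk C_p\cong\kk[x]/(x^p)$ is a Nakayama algebra whose complete list of indecomposable modules is $M_i:=\kk[x]/(x^i)$ for $1\leq i\leq p$, with $M_p=\kk C_p$ the unique indecomposable projective. Every indecomposable is finite-dimensional over~$\kk$, so compact in $\cat{F}$ once projectives are killed.

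For part~(a), I would invoke the classical classification of modules over the truncated polynomial ring (or Krull--Schmidt together with finite representation type): every $\kk C_p$-module is a direct sum of copies of the $M_i$. Passing to $\cat{F}=\Stab(\kk C_p)$ the projective summands $M_p$ vanish, so every object becomes a coproduct of the compact objects $M_1,\dots,M_{p-1}$. Part~(d) follows by the same circle of ideas: by the classical theorem of Auslander/Tachikawa a finite-dimensional algebra is pure-semisimple iff it has finite representation type, and $\kk C_p$ has only $p$ indecomposables; endofiniteness of each $M_i$ is automatic from its finite $\kk$-dimension, since $\End_\cat{F}(M_i)$ is then a finite-dimensional local $\kk$-algebra over which $M_i$ is of finite length.

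For part~(b), the categorical dimension of $M_i$ in $\cat{F}$ (the trace of the identity, computed via coevaluation--evaluation) coincides with $\dim_\kk M_i=i$, a scalar in $\End_\cat{F}(\unit)=\hat H^0(C_p,\kk)=\kk$. For $1\leq i\leq p-1$ this scalar is invertible, so the composite $\unit\to M_i\otimes M_i^\vee\isoto M_i^\vee\otimes M_i\to\unit$ is multiplication by a unit; equivalently the coevaluation $\unit\to M_i\otimes M_i^\vee$ is a split monomorphism, exhibiting $\unit$ as a retract of $M_i\otimes M_i^\vee$. Hence $M_i\otimes-$ is a faithful endofunctor of~$\cat{F}$. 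Any non-zero compact is a finite direct sum of $M_i$'s and inherits $\otimes$-faithfulness from any one of its summands.

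For part~(c), let $F\colon\cat{F}\to\cat{S}$ be non-zero and coproduct-preserving. Being a tt-functor, $F$ is $\kk$-linear on morphism spaces and $F(\unit_\cat{F})=\unit_\cat{S}$, so the induced ring map $\kk=\End_\cat{F}(\unit)\to\End_\cat{S}(\unit)$ is a non-zero map from a field and therefore injective. Given $f\colon X\to Y$ with $F(f)=0$, I would use~(a) to write $X=\bigoplus_\alpha x_\alpha$ and $Y=\bigoplus_\beta y_\beta$ with $x_\alpha,y_\beta$ among the $M_i$'s, then use compactness of $x_\alpha$ together with coproduct-preservation of $F$ to reduce $F(f)=0$ to $F(f_{\beta\alpha})=0$ for each component $f_{\beta\alpha}\colon x_\alpha\to y_\beta$. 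By rigidity one further reduces to morphisms of the form $g\colon\unit\to M_k$, and since $\Hom_\cat{F}(\unit,M_k)\cong\hat H^0(C_p,M_k)\cong\kk$ is one-dimensional, the task becomes to show $F(g_k)\neq 0$ for the canonical generator. The main obstacle is precisely this final step: one must rule out that $F$ collapses $M_k$ to a sum of copies of $\unit_\cat{S}$ (which is what $F(g_k)=0$ would force via the exact triangle $\unit\to M_k\to M_{k-1}\to\Sigma\unit$). My plan for overcoming it is to iterate: if $F(g_k)=0$ for all $k$, then inductively $F(M_k)\cong F(\unit)^{\oplus k}$ in $\cat{S}$, and by combining the preservation of $\otimes$-faithfulness from~(b) with the injectivity of $F$ on $\End(\unit)$ and on the Tate cohomology $\pi_{-*}(\unit)=\kk[t^{\pm1},s]/s^2$ (using that $F(t)$ is invertible hence non-zero), one derives a contradiction with the non-triviality of the nilpotent class~$s$ and hence with the very existence of such an~$F$.
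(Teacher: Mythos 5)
Parts (a), (b) and (d) are correct and follow the same route as the paper: finite representation type of $\kk C_p$ (Auslander/Ringel--Tachikawa) for (a) and the pure-semisimplicity in (d), and the categorical dimension argument (trace of the identity $=\dim_\kk[i]=i$, invertible in $\kk$, so $\unit$ is a retract of $[i]\otimes[i]^\vee$) for (b).

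Part (c) is where your proposal genuinely diverges, and it has a gap exactly at the step you flag. Your reduction to the generators $g_k\colon\unit\to M_k$ of the one-dimensional groups $\Homcat{F}(\unit,M_k)$ is fine, but the remaining task --- showing $F(g_k)\neq 0$ for \emph{each} $k$ --- is the entire content of the statement, and the proposed resolution does not supply it. Two problems: first, you only sketch a contradiction from ``$F(g_k)=0$ for all $k$'', whereas faithfulness requires ruling out $F(g_k)=0$ for each individual $k$; second, and more seriously, the hardest case is $k=p-1$, where $M_{p-1}\cong\Sigma\unit$ and $g_{p-1}$ is (up to a scalar) the nilpotent Tate class $s\in\hat\rmH^1(C_p,\kk)$. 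Appealing to ``the non-triviality of the nilpotent class $s$'' is circular: nilpotent elements are precisely the ones that a priori \emph{can} die under a tensor functor, and the fact that $s$ survives every such $F$ is advertised in the introduction as a \emph{consequence} of this proposition, not an input. Invertibility of $F(t)$ gives no control over $F(s)$, and no mechanism is offered for deriving a contradiction from $F(s)=0$ (the resulting splittings $F(M_{k-1})\cong F(M_k)\oplus\Sigma F(\unit)$ are not obviously absurd in an arbitrary $\cat{S}$). The paper avoids this entirely: by Brown--Neeman representability $F$ has a right adjoint $U$ satisfying the projection formula, so $UF\cong U(\unit_{\cat{S}})\otimes-$; since $\Homcat{F}(\unit,U(\unit_{\cat{S}}))\cong\Endcat{S}(\unit)\neq0$, the object $U(\unit_{\cat{S}})$ is non-zero, hence \tff\ by (a) and (b) (upgraded from compacts to all objects via the coproduct decomposition), so $UF$ and therefore $F$ is faithful. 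If you want to keep your direct approach, you would need to replace the final step by an argument of comparable substance --- the adjoint/projection-formula device is the natural one.
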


The assertions of this proposition remain true (with the same proof) when $\kk C_p$ is
replaced by the distribution algebra of the finite group scheme $\alpha_p$, i.e.\ when we look at representations of the finite group scheme $\alpha_p$. Here $\alpha_p$ is the subgroup scheme of $\mathbb{G}_a$ which is defined by $\alpha_p(R) = \{r\in R \mid r^p = 0\}$. One can also consider representations of $\mu_p$, the subgroup scheme of $\mathbb{G}_m$ consisting of $p$th roots of unity, and this also gives a tt-field, although of a different flavour. In the case of $\mu_p$ the category of representations is semisimple abelian and equivalent, as a tt-category, to $\mathbb{Z}/p\mathbb{Z}$-graded vector spaces with the trivial triangulation (cf.\ Example~\ref{exa:F_0xF_1}). We are grateful to Burt Totaro for the suggestion to consider $\mu_p$ and $\alpha_p$.

\begin{proof}
  Part~(a) is well-known and follows from the fact that the algebra
  $\kk C_p$ is of finite
  representation type, see~\cite{Auslander1974}
  or~\cite{RiTa1974}. Part~(b) follows easily from the description
  of indecomposable finite-dimensional $\kk C_p$-modules, of which
  there is exactly one,~$[i]$, which is of dimension~$i$, for each
  $1\le i\le p-1$. See details in Example~\ref{exa:C_p} below. The
  dimension of~$[i]$ is invertible in~$\kk$ and, being the trace of
  the identity, it factors as $\unit\to [i]\otimes [i]^\vee\to \unit$;
  this shows that $[i]$ is \tff. Part~(c) will follow from~(b) by a
  general argument given below in
  Corollary~\ref{cor:faithful}. Similarly, (a) implies $\cat{F}$
  pure-semisimple by a general result of~\cite{Krause00,Beligiannis00}
  (see Theorem~\ref{thm:semisimple} below). Endofiniteness then
  follows from finite-dimensionality over~$\kk$ of the homomorphism
  spaces in~$\cat{F}^c$.
\end{proof}

\begin{Exa}
\label{exa:C_p}%
With the notation of the proposition, if~$\sigma\in C_p$ is a generator and we let $t=\sigma-1$ then $\kk C_p\simeq \kk[t]/t^p$. The indecomposable non-projective modules are $[i]=\kk[t]/t^i$, for $i=1,\ldots,p-1$. One can show that for $i\le j$:
\begin{equation}
\label{eq:i@j}%
[i]\otimes [j]\simeq\left
\{\begin{array}{cl}{} [j-i+1]\oplus [j-i+3] \oplus \cdots\oplus [j+i-1]
\kern2em & \textrm{if } i+j\le p\\{} [j-i+1]\oplus [j-i+3]
\oplus \cdots\oplus [2p-i-j-1] & \textrm{if } i+j > p.
\end{array}\right.
\end{equation}
See~\cite[Cor.\,10.3]{CarlsonFriedlanderPevtsova08}. This formula indicates that the tensor can become rather involved even in what we want to think of as a tt-field.
\end{Exa}

We shall use a couple of times the following consequence of Brown-Neeman representability. See~\cite[Prop.\,2.15]{BalmerDellAmbrogioSanders16}.
\begin{Prop}
\label{prop:BN-rep}%
Let $F\colon\cat{T}\to \cat{S}$ be a coproduct-preserving tt-functor between `big' tt-categories (Hypothesis~\ref{hyp:big}). Then $F$ admits a right adjoint $U\colon\cat{S}\to \cat{T}$ satisfying the projection formula for every $X\in\cat{T}$ and $Y\in\cat{S}$:
\begin{equation}
\label{eq:proj-formula}%
X\otimes U(Y) \cong U(F(X)\otimes Y)\,.
\end{equation}
\end{Prop}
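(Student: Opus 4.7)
My plan is to obtain $U$ by Brown--Neeman representability and then verify the projection formula by reduction to compact (hence rigid) objects.

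\textbf{Step 1 (existence of $U$).} Since $\cat{T}$ is a rigidly-compactly generated tt-category, it is in particular compactly generated; combined with the hypothesis that $F$ preserves coproducts, Neeman's Brown representability theorem gives a right adjoint $U\colon\cat{S}\to\cat{T}$. I would also note that because $F$ is a tt-functor between rigidly-compactly generated tt-categories, $F$ sends rigid objects to rigid objects, i.e.\ $F(\cat{T}^c)\subseteq\cat{S}^c$; by the standard argument that right adjoints to functors preserving compacts preserve coproducts, $U$ preserves coproducts. This fact will be needed in the final reduction step.

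\textbf{Step 2 (construction of the projection map).} For every $X\in\cat{T}$ and $Y\in\cat{S}$, define
\[
\pi_{X,Y}\colon X\otimes U(Y)\too U(F(X)\otimes Y)
\]
as the mate, under the $F\dashv U$ adjunction, of the composite
\[
F(X\otimes U(Y))\isoto F(X)\otimes F(U(Y))\otoo{1\otimes\varepsilon_Y} F(X)\otimes Y,
\]
where the first map is the strict monoidal structure of $F$ and $\varepsilon$ is the counit of $F\dashv U$. Clearly $\pi_{\unit,Y}$ is an isomorphism and $\pi_{X,Y}$ is natural in both variables.

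\textbf{Step 3 ($\pi_{X,Y}$ is an isomorphism for $X$ compact).} Fix $Y\in\cat{S}$ and $X\in\cat{T}^c$; by Hypothesis~\ref{hyp:big}, $X$ is rigid, with dual $X^\vee$, and $F$ being strict monoidal sends $X^\vee$ to $F(X)^\vee$. For any $Z\in\cat{T}$, Yoneda-test $\pi_{X,Y}$ using the adjunction $(X^\vee\otimes-)\dashv(X\otimes-)$ together with $F\dashv U$:
\begin{align*}
\Homcat{T}(Z,X\otimes U(Y)) &\cong \Homcat{T}(X^\vee\otimes Z,U(Y))\\
 &\cong \Homcat{S}(F(X^\vee\otimes Z),Y)\\
 &\cong \Homcat{S}(F(X)^\vee\otimes F(Z),Y)\\
 &\cong \Homcat{S}(F(Z),F(X)\otimes Y)\\
 &\cong \Homcat{T}(Z,U(F(X)\otimes Y)).
\end{align*}
A routine check shows this composite is the map induced by $\pi_{X,Y}$, so $\pi_{X,Y}$ is an isomorphism by Yoneda.

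\textbf{Step 4 (extension to all $X\in\cat{T}$).} For fixed $Y$, consider the full subcategory $\cat{X}_Y\subseteq\cat{T}$ of objects $X$ for which $\pi_{X,Y}$ is an isomorphism. By Step~3 it contains $\cat{T}^c$. Both functors $X\mapsto X\otimes U(Y)$ and $X\mapsto U(F(X)\otimes Y)$ are triangulated and preserve arbitrary coproducts -- for the second one this uses that $F$ preserves coproducts (by hypothesis), that $-\otimes Y$ preserves coproducts (colimit-preservation of $\otimes$ in big tt-categories), and that $U$ preserves coproducts (Step~1). Since $\pi$ is natural, $\cat{X}_Y$ is a localizing subcategory of $\cat{T}$. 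As it contains $\cat{T}^c$ and $\cat{T}$ is generated by $\cat{T}^c$ as a localizing subcategory (Hypothesis~\ref{hyp:big}), we conclude $\cat{X}_Y=\cat{T}$.

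\textbf{Main obstacle.} The only delicate point is knowing that $U$ preserves coproducts, which in turn rests on $F$ preserving compacts. This is where the tt-structure (and in particular the identification of compact and rigid objects in the big setting) is genuinely used; without it, the standard projection-formula argument does not close up in Step~4.
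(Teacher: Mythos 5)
Your proof is correct and is essentially the standard argument; the paper itself does not reprove this statement but simply cites \cite[Prop.~2.15]{BalmerDellAmbrogioSanders16}, where the same strategy (Brown representability for $U$, coproduct-preservation of $U$ via $F(\cat{T}^c)\subseteq\cat{S}^c$, the mate construction for $\pi_{X,Y}$, reduction to rigid compacts, and the localizing-subcategory argument) is carried out. You have correctly identified the one genuinely non-formal input, namely that $U$ preserves coproducts because the tt-functor $F$ preserves rigid, hence compact, objects.
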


\begin{Cor}
\label{cor:faithful}%
Let $F\colon\cat{T}\to \cat{S}$ be a coproduct-preserving tt-functor between `big' tt-categories and suppose that every non-zero object of~$\cat{T}$ is \tff\ (\eg\ this holds if~$\cat{T}$ is a tt-field, Definition~\ref{def:field}). Then $F$ is faithful.
\end{Cor}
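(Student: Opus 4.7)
The plan is to turn the composite $UF$ into tensoring with a single object of $\cat{T}$, via the right adjoint and the projection formula of Proposition~\ref{prop:BN-rep}, and then invoke $\otimes$-faithfulness to conclude.

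More concretely, first let $U\colon\cat{S}\to\cat{T}$ be the right adjoint of $F$ given by Proposition~\ref{prop:BN-rep} and set $E:=U(\unit_{\cat{S}})$. Specializing $Y=\unit_{\cat{S}}$ in the projection formula~\eqref{eq:proj-formula} and using the canonical isomorphism $F(\unit_{\cat{T}})\cong\unit_{\cat{S}}$, we obtain a natural isomorphism $UF(X)\cong X\otimes E$ in the variable $X\in\cat{T}$. Naturality then shows that, under this identification, $UF(f)$ corresponds to $f\otimes\id_E$ for every morphism $f\colon X\to Y$ in~$\cat{T}$.

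Now suppose $F(f)=0$. Applying $U$ gives $UF(f)=0$, hence $f\otimes\id_E=0$. Provided $E\neq 0$, the hypothesis that every non-zero object of~$\cat{T}$ is \tff\ forces $f=0$, which is exactly the faithfulness of $F$. It remains to check that $E\neq 0$, which holds as soon as $\cat{S}\neq 0$ (the case $\cat{S}=0$ being degenerate): by one of the triangle identities the composite
\[
\unit_{\cat{S}}\cong F(\unit_{\cat{T}}) \xrightarrow{F\eta_{\unit}} FU(\unit_{\cat{S}}) \xrightarrow{\varepsilon_{\unit}} \unit_{\cat{S}}
\]
equals $\id_{\unit_{\cat{S}}}$, so $E=U(\unit_{\cat{S}})=0$ would force $\unit_{\cat{S}}=0$ and hence $\cat{S}=0$.

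The only slightly delicate point is the natural identification of $UF(f)$ with $f\otimes\id_E$ via the projection formula; once this is unwound, the argument is a one-line application of $\otimes$-faithfulness. This also explains the hypothesis chain: if we had assumed only that non-zero \emph{compact} objects of $\cat{T}$ are $\otimes$-faithful, the argument would still go through whenever $E$ happens to be compact, but in general $E=U(\unit_{\cat{S}})$ need not be compact, which is why one must assume $\otimes$-faithfulness of all non-zero objects (as in Definition~\ref{def:field}).
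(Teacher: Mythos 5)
Your proof is correct and follows essentially the same route as the paper: specialize the projection formula of Proposition~\ref{prop:BN-rep} at $Y=\unit_{\cat{S}}$ to identify $UF$ with $U(\unit_{\cat{S}})\otimes-$, check that $U(\unit_{\cat{S}})\neq 0$, and conclude faithfulness of $UF$, hence of $F$, from the $\otimes$-faithfulness hypothesis. The only cosmetic difference is that you verify $U(\unit_{\cat{S}})\neq 0$ via a triangle identity, whereas the paper computes $\Homcat{T}(\unit_{\cat{T}},U(\unit_{\cat{S}}))\cong\Homcat{S}(\unit_{\cat{S}},\unit_{\cat{S}})\neq 0$ by adjunction; both implicitly assume $\cat{S}\neq 0$, which you correctly flag as the degenerate case.
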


\begin{proof}
We use the notation of Proposition~\ref{prop:BN-rep}. From~\eqref{eq:proj-formula} for~$Y=\unit_{\cat{S}}$ comes $UF\simeq U(\unit_{\cat{S}})\otimes-$.
Since
\[
\Homcat{T}(\unit_{\cat{T}}, U(\unit_{\cat{S}})) \cong \Homcat{S}(F(\unit_{\cat{T}}), \unit_{\cat{S}}) \cong \Homcat{S}(\unit_{\cat{S}}, \unit_{\cat{S}}) \neq 0
\]
we see that $U(\unit_{\cat{S}})$ is a non-zero object of~$\cat{T}$. Hence $UF\simeq U(\unit_{\cat{S}})\otimes-$ is faithful by assumption. Hence~$F$ is faithful.
\end{proof}

We shall see in Remarks~\ref{rem:not-faithful-1} and~\ref{rem:not-faithful-2} that the hypotheses of Corollary~\ref{cor:faithful} are necessary. We also want to explain why being a pure-semisimple triangulated category is not sufficient to be a reasonable candidate for fieldness. Recall the (non-tensor) notion of pure-semisimplicity:
\begin{Thm}
\label{thm:semisimple}%
Let $\cat{F}$ be a compactly generated triangulated category. The following are equivalent:
\begin{enumerate}[\rm(i)]
\item
$\cat{F}$ is \emph{pure-semisimple}, \ie every object $X$ of~$\cat{F}$ is pure-injective (\ie $\hat X$ is injective in~$\MF$).
\smallbreak
\item
$\cat{F}$ is \emph{phantomless}, \ie $\yoneda\colon\cat{F}\to \MF$ is faithful (equivalently fully-faithful).
\smallbreak
\item
\label{it:ss-coprod}%
Every object of~$\cat{F}$ is a coproduct of compacts (that we can assume indecomposable with local endomorphism ring). In particular, $\cat{F}^c$ is Krull-Schmidt.
\end{enumerate}
\end{Thm}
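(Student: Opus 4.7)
My plan is to establish the implications (i)$\Leftrightarrow$(ii), (iii)$\Rightarrow$(ii), and (ii)$\Rightarrow$(iii), with the last one being substantially deeper than the others. The easy implications are essentially formal consequences of what has already been set up about restricted-Yoneda, while the hard direction invokes the machinery of~\cite{Krause00} and~\cite{Beligiannis00}.

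For (i)$\Rightarrow$(ii), I would apply the isomorphism~\eqref{eq:pure-h-ff} from Remark~\ref{rem:pure-inj} directly: when every $Y$ is pure-injective, the map $\Homcat{F}(X,Y)\to \Homcat{F}(\hat X,\hat Y)$ is a bijection for all $X,Y$, so $\yoneda$ is fully faithful and in particular kills no non-zero morphism. Conversely for (ii)$\Rightarrow$(i), given a pure monomorphism $\alpha\colon Y\to Z$ in $\cat{F}$, I would complete it to a triangle $Y\to Z\to W\to\Sigma Y$. Since $\hat\alpha$ is a monomorphism in $\MF$, the long exact sequence gotten from the homological functor $\yoneda$ degenerates into a short exact sequence $0\to \hat Y\to \hat Z\to \hat W\to 0$, forcing the connecting map $\hat W\to \Sigma\hat Y$ to vanish; that is, $W\to \Sigma Y$ is a phantom. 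By (ii) this phantom is zero, the triangle splits, and $\alpha$ admits a retraction, so $Y$ is pure-injective.

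The implication (iii)$\Rightarrow$(ii) is equally short: a phantom $f\colon X\to Y$ with $X\in\cat{F}^c$ compact satisfies $f=\hat f(X)(\id_X)=0$, since evaluating $\hat f$ at the compact~$X$ returns the zero map. If $X=\coprod_i x_i$ is a coproduct of compacts, each composite $f\circ\iota_i\colon x_i\to Y$ is again a phantom with compact source, hence zero; as the coproduct inclusions $\iota_i$ are jointly epimorphic in a triangulated category with coproducts, $f=0$.

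The main obstacle is (ii)$\Rightarrow$(iii). Having already derived (i), we know that $\yoneda\colon\cat{F}\to \MF$ is fully faithful with essential image contained in the \emph{flat-and-injective} objects of $\MF$ (flatness from Remark~\ref{rem:Day-1}, injectivity from~(i) via Remark~\ref{rem:pure-inj}). In the locally coherent Grothendieck category $\MF$ every injective decomposes as a direct sum of indecomposable injectives with local endomorphism rings; pulling this decomposition back through the equivalence, every $X\in\cat{F}$ becomes a coproduct of indecomposable pure-injectives $x_i$ each with local endomorphism ring. The residual and delicate step is to promote each such $x_i$ to a compact object of $\cat{F}^c$. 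Here I would appeal to a $\Sigma$-pure-injectivity / Ziegler-spectrum argument as carried out in~\cite{Krause00} and~\cite{Beligiannis00}: pure-semisimplicity forces each indecomposable summand to be $\Sigma$-pure-injective and in fact endofinite, and in a rigidly-compactly generated tt-category the endofinite indecomposable pure-injectives are precisely the indecomposable compacts. The Krull--Schmidt property of $\cat{F}^c$ then drops out from the local endomorphism rings, closing the cycle.
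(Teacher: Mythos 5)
The paper does not actually prove this theorem: its ``proof'' is a citation to Beligiannis \cite[Thm.~9.3]{Beligiannis00} and Krause \cite[Thm.~2.10]{Krause00}. So any genuine argument is a different route. Your implications (i)$\Leftrightarrow$(ii) and (iii)$\Rightarrow$(ii) are correct and are exactly the formal arguments one expects: full-faithfulness on pure-injectives via~\eqref{eq:pure-h-ff}, the splitting of a triangle whose connecting map is phantom, and the vanishing of phantoms with compact source. These parts are fine.

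The direction (ii)$\Rightarrow$(iii) contains two genuine gaps. First, it is \emph{not} true that every injective object of a locally coherent Grothendieck category decomposes as a coproduct of indecomposable injectives with local endomorphism rings; that decomposition theorem requires the category to be locally noetherian (Matlis--Gabriel). Here local noetherianness (indeed local finiteness) of $\MF$ is a \emph{consequence} of pure-semisimplicity, not something you may assume from local coherence, so as written this step is circular or simply false. Second, the claim that ``in a rigidly-compactly generated tt-category the endofinite indecomposable pure-injectives are precisely the indecomposable compacts'' is false in general --- for instance the residue field $k$ is an endofinite, indecomposable, non-compact object of $\Der(k[t]/t^2)$ --- and in any case the theorem is stated for an arbitrary compactly generated triangulated category with no tensor structure, so rigidity is not available. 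The step you must supply, and which is the actual content of the cited results, is why the indecomposable summands are \emph{compact}. The standard route is different: phantomlessness makes the canonical triangle $Y\to\coprod_i c_i\to X\to\Sigma Y$ (with $\coprod_i c_i\to X$ the canonical map inducing an epimorphism $\widehat{\coprod_i c_i}\onto\hat X$) split, so every object is a direct summand of a coproduct of compacts; one then shows $\End(c)$ is semiperfect for $c$ compact (whence $\cat{F}^c$ is Krull--Schmidt) and invokes a Crawley--J\o nsson--Warfield/Azumaya-type theorem to conclude that such a summand is itself a coproduct of compacts. Since you ultimately defer to \cite{Krause00,Beligiannis00} for this direction anyway --- as the paper itself does --- the defect is not fatal, but the two intermediate assertions you interpolate are wrong and should be removed.
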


\begin{proof}
See~Beligiannis~\cite[Thm.\,9.3]{Beligiannis00} which also gives a host of other equivalent formulations. Some of these results appear independently in~\cite[Thm.\,2.10]{Krause00}.
\end{proof}

\begin{Rem}
\label{rem:loc-fin}%
Another equivalent condition is that $\MF$ is \emph{Frobenius}, \ie
injectives and projectives coincide. When $\cat{F}^c$ is rigid, the
dual $(-)^\vee$ gives an equivalence $\cat{F}^c\cong(\cat{F}^c)\op$
and therefore $\Mod((\cat{F}^c)\op)\cong \MF$ is also Frobenius. It
then follows from~\cite[Prop.\,9.2]{Beligiannis00} that $\MF$ is
\emph{locally finite}, meaning that its finitely generated objects are
(finitely presented and) of finite length. This length function
on~$\cat{F}^c$ is an interesting invariant that shall be studied in
another work. The fact that $\MF$ is locally finite implies that
every indecomposable object of~$\cat{F}$
is \emph{endofinite}; see~\cite[\S\,3]{KrauseReichenbach01}.
\end{Rem}

Another illustration of the smallness of tt-fields is the following:
\begin{Prop}
\label{prop:dual}%
Let $\cat{F}$ be a `big' tt-category which is pure-semisimple (for instance a tt-field) and $\cat{A}=\MF$ its module category. Then there exists an exact functor $\ihomcat{A}(-,\hat\unit)\colon\cat{A}\op\to \cat{A}$ which restricts to an involution $(\cat{A}\fp)\op\isoto \cat{A}\fp$ on the finitely presented objects.
\end{Prop}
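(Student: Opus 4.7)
Write $D:=\ihomcat{A}(-,\hat\unit)\colon\cat{A}\op\to\cat{A}$. The plan is to check exactness of $D$ on all of $\cat{A}$ using Lemma~\ref{lem:ihominj}, compute $D$ on representables via the characterization of $\ihomcat{A}$ in Remark~\ref{rem:Day-1}, and then bootstrap from these two facts to obtain the involution on~$\cat{A}\fp=\mF$.

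First, since $\cat{F}$ is pure-semisimple, every object of $\cat{F}$ is pure-injective (Theorem~\ref{thm:semisimple}), so $\hat\unit$ is injective in $\cat{A}=\MF$ by Remark~\ref{rem:pure-inj}. Lemma~\ref{lem:ihominj} then gives that $D\colon\cat{A}\op\to\cat{A}$ is exact. Second, by the defining property of $\ihomcat{A}$ recalled in Remark~\ref{rem:Day-1}, for every compact-rigid $x\in\cat{F}^c$ we have
\[
D(\hat x)=\ihomcat{A}(\hat x,\hat\unit)\cong\widehat{\ihomcat{F}(x,\unit)}=\widehat{x^\vee},
\]
so $D$ transports the Yoneda image of $\cat{F}^c$ to itself, intertwining with the rigid duality $(-)^\vee\colon(\cat{F}^c)\op\isoto\cat{F}^c$.

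Next I would restrict $D$ to finitely presented objects. Any $M\in\cat{A}\fp$ admits a presentation $\hat x_1\to\hat x_0\to M\to 0$ with $x_0,x_1\in\cat{F}^c$, i.e.\ a short exact sequence $0\to I\to\hat x_0\to M\to 0$ where $I$ is the image of $\hat x_1\to\hat x_0$. Applying the exact contravariant functor $D$ and using the previous step, $D(M)$ fits into an exact sequence with outer terms $\widehat{x_0^\vee}$ and $\widehat{x_1^\vee}$; thus $D(M)$ lies in $\cat{A}\fp=\mF$, the latter being abelian as it is the Freyd envelope of $\cat{F}^c$. Hence $D$ restricts to an exact functor $(\cat{A}\fp)\op\to\cat{A}\fp$.

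Finally, for the involution, apply $D$ twice: on $\hat x$ we obtain a natural isomorphism $DD(\hat x)\cong \widehat{(x^\vee)^\vee}\cong\hat x$, functorially in $x\in\cat{F}^c$. Both $DD$ and $\Id_{\cat{A}\fp}$ are \emph{exact} covariant functors on the abelian category $\cat{A}\fp$, and they agree (via the biduality unit) on the projective generators $\{\hat x\}_{x\in\cat{F}^c}$; a standard diagram chase on a presentation $\hat x_1\to\hat x_0\to M\to 0$ then promotes this to a natural isomorphism $DD\cong\Id$ on all of $\cat{A}\fp$. Symmetrically one gets $DD\cong\Id$ on $(\cat{A}\fp)\op$, so $D\colon(\cat{A}\fp)\op\isoto\cat{A}\fp$ is the desired involution. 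The only genuinely delicate point is the identification $D(\hat x)\cong\widehat{x^\vee}$ on representables; everything else is formal exact-functor nonsense once that and the injectivity of $\hat\unit$ are in place.
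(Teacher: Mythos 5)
Your argument is correct and follows essentially the same route as the paper's proof: injectivity of $\hat\unit$ from pure-semisimplicity, exactness of $D=\ihomcat{A}(-,\hat\unit)$ via Lemma~\ref{lem:ihominj}, the identification $D(\hat x)\cong\widehat{x^\vee}$ on representables, and then formal exactness to get $D^2\cong\Id$ on $\cat{A}\fp$. The only (cosmetic) difference is that the paper writes $M=\im(\hat f)$ and concludes $D^2(M)\cong\im(\widehat{f^{\vee\vee}})=M$ directly, whereas you run the equivalent argument via presentations and agreement of the two exact functors $DD$ and $\Id$ on the projective generators $\hat x$.
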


\begin{proof}
As $\hat\unit$ is injective, the functor $D=\ihomcat{A}(-,\hat\unit)\colon\cat{A}\op\to \cat{A}$ is exact by Lemma~\ref{lem:ihominj}. Let $M\in\cat{A}\fp$ and $x,y\in \cat{F}$ such that $M=\im(f)$ for some $f\colon x\to y$ in~$\cat{F}$. A direct verification shows that $D(\hat x)\cong\widehat{x^\vee}$ and exactness of~$D$ gives $D(M)\simeq\im(\widehat{f^\vee})$ and therefore $D^2(M)\cong\im(\widehat{f^{\vee\vee}})\cong\im(\hat{f})=M$.
\end{proof}

\begin{Rem}
\label{rem:ss-faithful}%
The assumption about $\cat{F}$ being pure-semisimple removes any ambiguity about the meaning of `every non-zero object is $\otimes$-faithful'. Suppose the weakest form, namely that the functor $x\otimes-\colon\cat{F}^c\to \cat{F}^c$ is faithful for every non-zero compact $x\in\cat{F}^c$. Then it immediately follows from Theorem~\ref{thm:semisimple}\,\eqref{it:ss-coprod} that $x\otimes-\colon\cat{F}\to \cat{F}$ is faithful as well and then any non-zero $X\otimes-\colon\cat{F}\to\cat{F}$ is faithful.
\end{Rem}

\begin{Exa}
\label{exa:C_p^n}%
Let $C_{p^n}$ denote the cyclic group with $p^n$ elements and let $\kk$ be a field of characteristic $p$. The stable category $\sMod{\kk C_{p^n}}$ is pure-semisimple. This follows, for instance, from the fact that $\kk C_{p^n}$ has finite representation type (see~\cite[\S\,12]{Beligiannis00}). Moreover, the compact part $\smodname \kk C_{p^n}$ is local and its spectrum is a single point. However, $\sMod \kk C_{p^n}$ should morally not be a tt-field for $n\ge 2$. Indeed, restriction along the inclusion $C_p \to C_{p^n}$ gives a functor
\begin{displaymath}
\sMod \kk C_{p^n} \to \sMod \kk C_p
\end{displaymath}
which should be regarded as a residue field. In other words, $\Stab \kk C_{p^n}$ can be made `smaller'. Indeed, the tt-category $\sMod \kk C_{p^n}$ is not a tt-field in the sense of Definition~\ref{def:field} as the $p^{n-1}$-dimensional module $\kk(C_{p^n}/C_{p})$ is not $\otimes$-faithful.
\end{Exa}

\begin{Exa}
\label{exa:Q_8}%
Consider the group of quaternions~$Q_8$. As its center~$C_2$ is the maximal elementary abelian subgroup, $\Spc(\smod\kk Q_8)$ is a point. In this case, the residue field functor is probably given by restriction from~$Q_8$ to~$C_2$. It is another amusing case of something like an artinian local tt-category whose residue field is still (probably) an \'etale extension in the sense of~\cite{BalmerDellAmbrogioSanders15}.
\end{Exa}

\begin{Rem}
\label{rem:not-faithful-1}%
The faithfulness of any coproduct-preserving tt-functor $F\colon\cat{F}\to \cat{S}$ out of a tt-field~$\cat{F}$ (Corollary~\ref{cor:faithful}) cannot hold if~$\cat{F}$ is merely pure-semisimple. Indeed, as in Example~\ref{exa:C_p^n}, let $G=C_{p^n}$ with $n\ge 2$ and $C_p<G$ the maximal elementary abelian. Then $\Res^G_{C_{p}}\colon\Stab(\kk G)\to \Stab(\kk C_p)$ is a non-faithful tt-functor and $\Stab(\kk G)$ is pure-semisimple.
\end{Rem}

One might think that if an object $X$ of a `big' tt-category~$\cat{T}$ has the property that the kernel $X\otimes(-)$ contains no non-zero compact object then $X$ should survive in the residue field.  The following example illustrates that this is not the case (cf.\ Corollary~\ref{cor:bar-0}).

\begin{Exa}
Let $\cat{T} = \SH_{(p)}$ denote the $p$-local stable homotopy category and $IS$ the Brown-Comenetz dual of the sphere spectrum. This object is characterized by the existence of natural isomorphisms $\Homcat{T}(?,IS)\cong \Hom_{\bbZ}(\pi_0(?),\bbQ/\bbZ)$. The functor $IS\wedge -$ has no kernel on finite spectra, so one might suspect it should survive in some residue field. However, $IS\wedge IS \cong 0$ which suggests that in fact $IS$ must become zero in any residue field. Details and references for the facts in this example can be found in \cite[Section~7]{HoveyPalmieri99}.
\end{Exa}

\begin{center}
*\ *\ *
\end{center}

We now discuss further properties of tt-fields in the sense of our Definition~\ref{def:field}.

\begin{Prop}
\label{prop:spcF}%
Let $\cat{F}$ be a tt-category such that every object of~$\cat{F}^c$ is $\otimes$-faithful (for instance, a tt-field). Then $\SpcF$ is a point.
\end{Prop}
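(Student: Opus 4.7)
The plan is to show directly that $(0)$ is the unique prime thick $\otimes$-ideal of $\cat{F}^c$, which by definition makes $\Spc(\cat{F}^c)$ a single point. First I would observe that $\cat{F}^c$ is local: if $x,y\in\cat{F}^c$ satisfy $x\otimes y=0$ with $x\neq 0$, then $x\otimes 1_y=0$ and $\otimes$-faithfulness of $x$ forces $1_y=0$, hence $y=0$. In particular $(0)$ is a prime of $\cat{F}^c$.

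The key step is to show that for every non-zero $x\in\cat{F}^c$ the thick $\otimes$-ideal $\langle x\rangle$ generated by $x$ is all of $\cat{F}^c$; equivalently, $\unit\in\langle x\rangle$. I would complete the coevaluation $\coev_x\colon\unit\to x^{\vee}\otimes x$ to an exact triangle
\[
\unit \overset{\coev_x}{\longrightarrow} x^{\vee}\otimes x \longrightarrow C \overset{\delta}{\longrightarrow} \Sigma\unit
\]
in $\cat{F}^c$. The triangle identity $(\ev_x\otimes 1_x)\circ(1_x\otimes\coev_x)=1_x$ says that $1_x\otimes\coev_x$ is a split monomorphism, so tensoring the above triangle with $x$ yields a split triangle. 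In particular $1_x\otimes\delta=0$, and $\otimes$-faithfulness of $x$ then forces $\delta=0$. Hence the original triangle also splits, exhibiting $\unit$ as a direct summand of $x^{\vee}\otimes x$; the latter lies in $\langle x\rangle$, so $\unit\in\langle x\rangle$ as required.

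To conclude, any prime thick $\otimes$-ideal $\cat{P}$ of $\cat{F}^c$ is proper, and therefore cannot contain any non-zero $x$: for such $x$ we would have $\langle x\rangle\subseteq\cat{P}$, forcing $\unit\in\cat{P}$. Thus $(0)$ is the only prime, and $\Spc(\cat{F}^c)=\{(0)\}$ is a point.

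I expect the main obstacle in writing this out carefully to be the bookkeeping around the triangle identity for rigidity, in order to conclude cleanly that $1_x\otimes\coev_x$ is a split monomorphism (and hence that the connecting morphism of the tensored triangle vanishes); once this is in place, the rest follows immediately from the hypothesized $\otimes$-faithfulness on $\cat{F}^c$.
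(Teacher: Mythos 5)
Your argument is correct and is essentially the paper's proof: the paper likewise deduces from the triangle identity that $x\otimes\coev_x$ splits and then uses $\otimes$-faithfulness of $x$ to conclude $\coev_x$ is a split monomorphism, hence $\unit\in\ideal{x}$; you merely make the intermediate step explicit by completing $\coev_x$ to a triangle and killing the connecting map. Your additional check that $(0)$ is prime (so the spectrum is nonempty) is a point the paper leaves implicit, and is a welcome inclusion.
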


\begin{proof}
We need to show that any non-zero object~$x\in\cat{F}^c$ generates the whole category as a $\otimes$-ideal i.e.\ $\ideal{x}=\cat{F}^c$. We know that $x\otimes\coev _x$ is a split monomorphism, where $\coev_x\colon\unit\to x^\vee\otimes x$ is the unit of the $x\otimes-\adj x^\vee\otimes-$ adjunction. It follows from faithfulness of~$x\otimes-$ that $\coev_x$ is a split monomorphism, hence $\unit\in\ideal{x}$.
\end{proof}

\begin{Rem}
The converse to Proposition~\ref{prop:spcF} does not hold, as can be seen on $\cat{T}=\Der(R)$ for $R$ artinian local, not a field, say $R=\kk[t]/t^2$ for $\kk$ a field.
\end{Rem}

\begin{Thm}
\label{thm:field}%
Let $\cat{F}$ be a tt-field in the sense of Definition~\ref{def:field}. Then we have:
\begin{enumerate}[\rm(a)]
\smallbreak
\item
\label{it:field-Serre}%
The only proper Serre $\otimes$-ideal $\cat{B}\subset\mF$ is zero.
\smallbreak
\item
\label{it:field-hom-faithful}%
For every non-zero $X$ in~$\cat{F}$, the functors $\ihom(X,-)$ and $\ihom(-,X)$ are faithful.
\smallbreak
\item
\label{it:field-cohomology}%
Let $X\in\cat{F}$ be a non-zero object and $f\colon Y\to Z$ a morphism in~$\cat{F}$ such that $\Homcat{F}(c\otimes f, X)=0$ for all $c\in\cat{F}^c$. Then $f=0$.
\end{enumerate}
\end{Thm}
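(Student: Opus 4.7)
The plan is to prove the three parts in the order (a), the $\ihom(X,-)$ half of (b), and then the $\ihom(-,X)$ half of (b), from which (c) will follow immediately. Throughout I will exploit that a tt-field is pure-semisimple: by Theorem~\ref{thm:semisimple}, the condition that every object is a coproduct of compact-rigids (part of Definition~\ref{def:field}) forces $\yoneda\colon\cat{F}\to\cat{A}:=\MF$ to be faithful. I will also use freely that every non-zero object of $\cat{F}$ is $\otimes$-faithful. I expect (a) to be the main obstacle; (b) and (c) will follow by abelian bookkeeping.

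For (a), I will argue by contradiction: assume $\cat{B}\subsetneq\mF$ is a proper Serre $\otimes$-ideal with $\cat{B}\neq 0$. Construction~\ref{cons:E} produces the pure-injective $E=E(\cat{B})\in\cat{F}$, and Theorem~\ref{thm:E-tens} gives $\overrightarrow{\cat{B}}=\Ker(\hat E\otimes-)$ in $\cat{A}$. Properness of $\cat{B}$ means $\hat\unit\notin\cat{B}$; by the f.p.-identification of Proposition~\ref{prop:A/B}\,(a), this upgrades to $\hat\unit\notin\overrightarrow{\cat{B}}$, so $\hat E=\hat E\otimes\hat\unit\neq 0$ and thus $E\neq 0$. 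On the other hand, non-triviality of $\cat{B}$ yields some $f\colon c\to d$ in $\cat{F}^c$ with $f\neq 0$ and $\im(\hat f)\in\cat{B}$. Flatness of $\hat E$ in $\cat{A}$ propagates $\hat E\otimes\im(\hat f)=0$ to $\hat E\otimes\hat f=0$, and Corollary~\ref{cor:E-residual}, applicable because $d$ is compact, promotes this to $E\otimes f=0$. Since $E\neq 0$, the $\otimes$-faithfulness of $E$ in the tt-field $\cat{F}$ then forces $f=0$, a contradiction.

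The faithfulness of $\ihom(X,-)$ in (b) is direct. Write $X=\coprod_i x_i$ with $x_i\in\cat{F}^c$ and choose $x_{i_0}\neq 0$ (possible since $X\neq 0$). Contravariance of $\ihom$ in its first slot and rigidity of each $x_i$ give the natural identification $\ihom(X,-)\cong\prod_i(x_i^\vee\otimes-)$. For $g\neq 0$, the $i_0$-component $x_{i_0}^\vee\otimes g$ is non-zero by $\otimes$-faithfulness of the non-zero $x_{i_0}^\vee$; since product projections in a triangulated category with products are split epi, the product map $\ihom(X,g)$ is non-zero.

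For the faithfulness of $\ihom(-,X)$---and hence for (c), which by pure-semisimplicity and the adjunction $\Hom(c,\ihom(Z,X))\cong\Hom(c\otimes Z,X)$ is just the reformulation ``$\Homcat{F}(c\otimes f,X)=0$ for all $c\in\cat{F}^c$ iff $\ihom(f,X)=0$''---I reduce to (a) as follows. Each $d\otimes X\cong\ihom(d^\vee,X)$ is pure-injective by Proposition~\ref{prop:phantom}\,(c), so $\widehat{d\otimes X}$ is injective in $\cat{A}$. I then consider
\[
\cat{B}\;:=\;\bigl\{\,N\in\mF \ \big| \ \Hom_{\cat{A}}\bigl(N,\widehat{d\otimes X}\bigr)=0\ \text{for all}\ d\in\cat{F}^c\,\bigr\}.
\]
Exactness of $\Hom(-,\text{injective})$ makes $\cat{B}$ Serre; the identity $\Hom(\hat c\otimes N,\widehat{d\otimes X})\cong\Hom(N,\widehat{c^\vee\otimes d\otimes X})$---coming from rigidity of $\hat c$ in $\cat{A}$ and the Day-convolution formula $\hat{c^\vee}\otimes\widehat{d\otimes X}\cong\widehat{c^\vee\otimes d\otimes X}$---combined with every object of $\mF$ being a quotient of some $\hat c$, makes $\cat{B}$ a $\otimes$-ideal. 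Taking $d=x_{i_0}^\vee$, the identity $\id_{x_{i_0}}$ supplies a non-zero element of $\Hom(\unit,x_{i_0}^\vee\otimes X)\cong\Hom(\hat\unit,\widehat{d\otimes X})$, so $\hat\unit\notin\cat{B}$ and $\cat{B}\neq\mF$; by (a), $\cat{B}=0$. For non-zero $f\colon Y\to Z$, pure-semisimplicity gives $\hat f\neq 0$, hence $I:=\im(\hat f)\neq 0$ in $\cat{A}$; a non-zero finitely presented subobject $I'\subseteq I$ (arising as the image of some non-zero morphism from $\hat c$) lies outside $\cat{B}=0$, yielding $\Hom(I',\widehat{d\otimes X})\neq 0$ for some $d$, and injectivity of $\widehat{d\otimes X}$ extends this to a non-vanishing $\Hom(I,\widehat{d\otimes X})\neq 0$. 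Unwinding the adjunctions above identifies this non-vanishing with $\widehat{\ihom(f,X)}(d^\vee)\neq 0$, so pure-semisimplicity delivers $\ihom(f,X)\neq 0$.
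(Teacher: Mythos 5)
Your proof is correct. Parts (a) and (c) and the $\ihom(X,-)$ half of (b) follow the paper's own route: (a) via $E(\cat{B})$, Theorem~\ref{thm:E-tens} and Corollary~\ref{cor:E-residual} (you merely spell out the details the paper leaves implicit, such as $E\neq 0$), and (c) as a formal consequence of phantomlessness plus the $\otimes$--$\ihom$ adjunction. The genuine divergence is in the faithfulness of $\ihom(-,X)$. The paper decomposes $X=\coprod_i x_i$, reduces to $\ihom(-,x_i)\cong\ihom(-,\unit)\otimes x_i$, and then checks directly that $(-)^\vee$ is faithful on all of $\cat{F}$ by decomposing source and target of a morphism into compacts and using that $(-)^\vee$ is an involution on $\cat{F}^c$. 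You instead bootstrap from part (a): the subcategory of $N\in\mF$ with $\Homcat{A}(N,\widehat{d\otimes X})=0$ for all $d\in\cat{F}^c$ is a proper Serre $\otimes$-ideal (Serre by injectivity of $\widehat{d\otimes X}$, ideal by Lemma~\ref{lem:id-Tc} and rigidity, proper via the coevaluation of a non-zero compact summand of~$X$), hence zero, and a non-zero $f$ then has non-zero image in $\cat{A}$ detected by some $\widehat{d\otimes X}$, which unwinds to $\ihom(f,X)\neq 0$. Your route is arguably more conceptual -- it makes (b) a direct corollary of (a) and foreshadows the ``detection by injectives'' philosophy used elsewhere in the paper -- at the cost of invoking more machinery (pure-injectivity of $d\otimes X$ via Proposition~\ref{prop:phantom}, the isomorphism \eqref{eq:pure-h-ff}, and local coherence to produce the finitely presented subobject $I'$; for the last point note that $I$ embeds in $\hat Z$ with $Z$ a coproduct of compacts, or simply that $\MF$ is locally finite by Remark~\ref{rem:loc-fin}, so finitely generated subobjects are finitely presented). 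The paper's argument is more hands-on but self-contained at the level of $\cat{F}$ itself.
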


\begin{proof}
For \eqref{it:field-Serre}, let $E=E(\cat{B})\in\cat{F}$ be the pure-injective associated to~$\cat{B}$ as in Construction~\ref{cons:E}. As every $M\in\mF$ is the image of $f\colon x\to y$ in~$\cat{F}^c$, Corollary~\ref{cor:E-residual} and \tff ness of~$E$ imply that $\cat{B}=0$ as wanted.

For~\eqref{it:field-hom-faithful}, we have from Theorem~\ref{thm:semisimple}\,\eqref{it:ss-coprod} that $X\simeq \coprod_{i\in I}x_i$ with $x_i\in \cat{F}^c$ compact. Hence $\ihom(X,-)\simeq \prod_{i\in I}\ihom(x_i,-)$ and $\ihom(x_i,-)\cong x_i^\vee\otimes-$ is faithful for $x_i$ non-zero. On the other hand, since each $x_i$ is a summand of~$X$, it follows that $\ihom(-,x_i)$ is a summand of~$\ihom(-,X)$ and it suffices to prove that $\ihom(-,x_i)$ is faithful. We have $\ihom(-,x_i)\cong \ihom(-,\unit)\otimes x_i$. Indeed, this holds when the source is compact, which is sufficient up to pulling out coproducts as products since the category is pure-semisimple. We are left to show that $(-)^\vee=\ihom(-,\unit)$ is faithful on the whole of~$\cat{F}$. This is immediate from Theorem~\ref{thm:semisimple}\,\eqref{it:ss-coprod} again: Let $f\colon Y\to Z$ be a morphism in~$\cat{F}$ and $Y\simeq \coprod_{j\in J} y_j$ and $Z\simeq\coprod_{k\in K} z_k$ with $y_j,\,z_k\in\cat{F}^c$. Then $f$ is characterized by $f_{kj}\colon y_j\to z_k$ (using compactness of the~$y_j$). Let $j\in J$ and $k\in K$ such that $f_{kj}\colon y_j\into y \oto{f} z\onto z_k$ is non-zero. Then, as $(-)^\vee\colon(\cat{F}^c)\op\to \cat{F}^c$ is an involution, we have $0\neq f_{kj}^\vee\colon z_k^\vee\to z^\vee\oto{f^\vee}y^\vee\to y_j^\vee$, showing that $f^\vee\neq0$.

For~\eqref{it:field-cohomology}, it now suffices to show that the morphism $\ihom(f,X)$ is zero in~$\cat{F}$. As $\cat{F}$ is phantom-free (Theorem~\ref{thm:semisimple}), it suffices to show that this morphism is a phantom, \ie that it maps to zero under every $\Homcat{F}(c,-)$ for $c\in\cat{F}^c$. The result now follows from adjunction: $\Homcat{F}(c,\ihom(f,X))\cong\Homcat{F}(c\otimes f,X)=0$.
\end{proof}

\begin{Exa}
It follows easily from Theorem~\ref{thm:field}\,\eqref{it:field-Serre} that for every non-zero object $X\in\cat{F}$ and $M\in\mF$, we have $\hat X\otimes M=0$ only if $M=0$. In fact the argument given in the proof, replacing $E$ by $X$, shows this.
\end{Exa}

\begin{Exa}
\label{exa:F_0xF_1}%
One cannot conclude from Theorem~\ref{thm:field} that every proper Serre subcategory of~$\mF$ is zero, without the $\otimes$-ideal assumption. In fact, $\cat{F}$ can even additively decompose itself, as the following simple example shows. Let $\cat{F}_0$ be a tt-field in the sense of Definition~\ref{def:field}, for instance the derived category of a field~$k$. Let $\cat{F}_1:=\cat{F}_0$ another copy of the same triangulated category and $\cat{F}=\cat{F}_0\times \cat{F}_1$ with component-wise morphisms. We define the tensor product in a $\bbZ/2$-graded way:
\[
(x_0,x_1)\otimes (y_0,y_1):=\big((x_0\otimes y_0)\oplus (x_1\otimes y_1)\,,\, (x_0\otimes y_1)\oplus (x_1\otimes y_0)\big)\,.
\]
This makes~$\cat{F}$ into a tt-category; for instance $\unit_{\cat{F}}=(\unit,0)$. It is easy to verify that $\cat{F}$ remains a tt-field in the sense of Definition~\ref{def:field}. Moreover, $(0,\unit)$ is invertible of order two, so every object of $\cat{F}$ is a direct sum of invertible objects.

On the other hand, $\cat{F}$ displays some behaviour that is, at least at first glance, not desirable from a provincial point of view on fields. There is a natural collapsing functor
\begin{displaymath}
\pi\colon \cat{F} \to \cat{F}_0 \quad \text{defined by} \quad (x_0,x_1) \longmapsto x_0\oplus x_1.
\end{displaymath}
One easily checks that $\pi$ is strong monoidal and faithful; our field $\cat{F}$ faithfully (but not fully) embeds via a tensor functor into a `smaller' field $\cat{F}_0$. This reflects the situation at the level of abelian categories: the category of $\bbZ/2\bbZ$-graded $k$-vector spaces (of which $\cat{F}$ is the derived category) exhibits the same embedding into ungraded $k$-vector spaces.
\end{Exa}

\begin{Rem}
\label{rem:not-faithful-2}%
The above example also shows that one cannot simply define a field by requesting that \emph{every} triangulated functor out of~$\cat{F}$ be faithful. Indeed, the projection on the first factor~$\cat{F}\onto \cat{F}_0$ is not faithful. Note that it is not a tt-functor (compare Corollary~\ref{cor:faithful}). Example~\ref{exa:F_0xF_1} also shows that Corollary~\ref{cor:faithful} cannot hold for the adjoints of tt-functors. Indeed, the projection $\cat{F}=\cat{F}_0\times\cat{F}_1\onto \cat{F}_0$ is (two-sided) adjoint to the coproduct-preserving tt-functor $\cat{F}_0\hook\cat{F}$ given by inclusion.
\end{Rem}

Here is a very short characterization of being a tt-field (Definition~\ref{def:field}).
\begin{Thm}
\label{thm:field-ihom}%
A `big' tt-category $\cat{F}$ is a tt-field if and only if $\cat{F}\neq0$ and the functor $\ihom(-,X)\colon\cat{F}\op\to \cat{F}$ if faithful for every non-zero object~$X\in\cat{F}$.
\end{Thm}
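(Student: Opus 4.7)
The plan is to handle the two directions of the equivalence quite asymmetrically: the forward implication is essentially free while the converse requires combining the pure-injective machinery of Section~\ref{se:MT} with the closed monoidal adjunction.

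For $(\Rightarrow)$, assume $\cat{F}$ is a tt-field. Then $\cat{F}\neq 0$ by definition and Theorem~\ref{thm:field}\,\eqref{it:field-hom-faithful} already gives that $\ihom(-,X)$ is faithful for every non-zero $X$. So this direction is immediate.

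For $(\Leftarrow)$, assume $\cat{F}\neq 0$ and that $\ihom(-,X)$ is faithful for every non-zero $X\in\cat{F}$. I need to verify both clauses of Definition~\ref{def:field}. First I would establish pure-semisimplicity via the no-phantom criterion of Theorem~\ref{thm:semisimple}. Produce a non-zero pure-injective object $E\in\cat{F}$ by taking an injective envelope of $\hat{\unit}$ in $\MF$ and pulling it back through Remark~\ref{rem:pure-inj}; the envelope is non-zero because $\hat\unit\neq 0$ (recall $\unit\neq 0$ since $\cat{F}\neq 0$). Now for any phantom $f$ in~$\cat{F}$, Proposition~\ref{prop:phantom}\,\eqref{it:phantom-b} gives $\ihom(f,E)=0$, and the faithfulness hypothesis applied to the non-zero object $E$ forces $f=0$. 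Hence $\cat{F}$ has no non-zero phantoms, and Theorem~\ref{thm:semisimple} yields that every object of $\cat{F}$ is a coproduct of compact (equivalently, by Hypothesis~\ref{hyp:big}, compact-rigid) objects.

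Next I would verify $\otimes$-faithfulness of every non-zero $X$. Suppose $X\neq 0$ and $X\otimes f=0$ for some morphism $f\colon Y\to Z$. Exploiting the standard closed-monoidal adjunction
\[
\ihom(X\otimes Y,W)\ \cong\ \ihom(Y,\ihom(X,W))
\]
naturally in $Y$, the vanishing of $X\otimes f$ gives $\ihom(f,\ihom(X,W))=0$ for every $W$. Choosing $W=X$, the object $\ihom(X,X)$ is non-zero because $\Homcat{F}(\unit,\ihom(X,X))\cong\Homcat{F}(X,X)$ contains $\id_X\neq 0$. The faithfulness hypothesis applied to $\ihom(X,X)\neq 0$ then forces $f=0$, as required.

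Combining these two steps with the assumption $\cat{F}\neq 0$ verifies Definition~\ref{def:field} and completes the proof. The only subtle ingredient is the existence of a non-zero pure-injective witness in Step~1, which is what makes Proposition~\ref{prop:phantom}\,\eqref{it:phantom-b} applicable; everything else is essentially a direct manipulation of the closed symmetric monoidal structure.
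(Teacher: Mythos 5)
Your proof is correct. The forward direction and the first half of the converse (producing a non-zero pure-injective witness, invoking Proposition~\ref{prop:phantom}\,\eqref{it:phantom-b} to kill phantoms, then Theorem~\ref{thm:semisimple} to get the coproduct-of-compacts decomposition) coincide with the paper's argument; you are in fact slightly more careful than the paper in justifying the \emph{existence} of a non-zero pure-injective via the injective envelope of $\hat\unit$ and Remark~\ref{rem:pure-inj}. Where you genuinely diverge is in verifying $\otimes$-faithfulness. The paper restricts to compacts, identifies $\ihom(-,x)\simeq(-)^\vee\otimes x$ by rigidity, deduces faithfulness of $x\otimes-$ on $\cat{F}^c$, and then bootstraps to all of $\cat{F}$ via the pure-semisimplicity argument of Remark~\ref{rem:ss-faithful}. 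You instead give a direct closed-monoidal argument: $X\otimes f=0$ implies $\ihom(f,\ihom(X,W))=0$ for all $W$ by the internal adjunction, and $\ihom(X,X)\neq0$ since $\Homcat{F}(\unit,\ihom(X,X))\cong\Homcat{F}(X,X)\ni\id_X$, so faithfulness of $\ihom(-,\ihom(X,X))$ kills $f$. This is clean and has the added virtue of not using pure-semisimplicity at all, so it isolates the fact that in any closed `big' tt-category the faithfulness hypothesis on all $\ihom(-,Y)$ with $Y\neq0$ already forces every non-zero object to be $\otimes$-faithful; the coproduct decomposition is then the only place where the pure-injective machinery is needed. The paper's route, by contrast, leans on the Krull--Schmidt structure coming from Theorem~\ref{thm:semisimple}\,\eqref{it:ss-coprod}. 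Both are valid; yours modularizes the two clauses of Definition~\ref{def:field} more cleanly.
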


\begin{proof}
We have already seen in Theorem~\ref{thm:field}\,\eqref{it:field-hom-faithful} that the condition is necessary. Conversely, choose a non-zero pure-injective $X\in\cat{F}$. By Proposition~\ref{prop:phantom}\,\eqref{it:phantom-b}, the functor $\ihom(-,X)$ vanishes on phantom maps; but we are also assuming that this functor is faithful. It follows that $\cat{F}$ is phantomless and we conclude by Theorem~\ref{thm:semisimple} that every object is a coproduct of compacts. It follows that for every $x\in\cat{T}^c$, the functor $\ihom(-,x)\simeq (-)^\vee\otimes x$ is faithful, and in particular on compacts $-\otimes x$ becomes faithful, which we know suffices by Remark~\ref{rem:ss-faithful}.
\end{proof}

We note for future use that the tt-subcategories of tt-fields are fields themselves:

\begin{Prop}
\label{prop:full-subfield}%
Let $\cat{F}$ be a tt-field and $F\colon\cat{G}\hook\cat{F}$ be a \emph{fully-faithful} coproduct-preserving tt-functor, where $\cat{G}$ is rigidly-compactly generated. Then $\cat{G}$ is a tt-field.
\end{Prop}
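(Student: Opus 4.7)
The plan is to verify both conditions of Definition~\ref{def:field}: that every non-zero object of $\cat{G}$ is $\otimes$-faithful, and that every object of $\cat{G}$ is a coproduct of compact-rigid objects. Non-triviality is immediate since $F(\unit_{\cat{G}})\cong\unit_{\cat{F}}\neq 0$ forces $\unit_{\cat{G}}\neq 0$; note also that $F$ is in particular faithful, hence $F(Y)\neq 0$ whenever $Y\neq 0$ in $\cat{G}$ (otherwise $\id_Y$ would map to $0$ and then vanish by faithfulness).

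For $\otimes$-faithfulness: given a non-zero $Y\in\cat{G}$ and $f\colon A\to B$ in $\cat{G}$ with $Y\otimes f=0$, monoidality of~$F$ gives $F(Y)\otimes F(f)=0$. Since $F(Y)\neq 0$ in the tt-field~$\cat{F}$, the functor $F(Y)\otimes-$ is faithful on~$\cat{F}$, forcing $F(f)=0$ and hence $f=0$ by faithfulness of~$F$.

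For the coproduct-of-compacts property, invoke Proposition~\ref{prop:BN-rep} to obtain a right adjoint $U\colon\cat{F}\to\cat{G}$. Being a tt-functor, $F$ preserves rigidity, hence compactness (cf.\ Remark~\ref{rem:Day-1}), so $F(\cat{G}^c)\subseteq\cat{F}^c$. A standard adjunction argument testing against $c\in\cat{G}^c$ then shows that $U$ preserves coproducts. Full-faithfulness of~$F$ is equivalent to the unit $\eta\colon\Id_{\cat{G}}\isoto UF$ being a natural isomorphism. Now take $X\in\cat{G}$. Since $\cat{F}$ is a tt-field, decompose $F(X)\cong\coprod_{i\in I}x_i$ with $x_i\in\cat{F}^c$. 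Each canonical split inclusion $x_j\into F(X)$ yields an idempotent $e_j\in\Endcat{F}(F(X))$ which, via full-faithfulness of~$F$, corresponds to an idempotent $\tilde e_j\in\Endcat{G}(X)$. As $\cat{G}$ has countable coproducts it is idempotent complete, so $\tilde e_j$ splits as $X\cong c_j\oplus X'_j$ with $F(c_j)\cong x_j$. Such a $c_j$ is then compact in~$\cat{G}$: for any family $(Y_k)$ in~$\cat{G}$, full-faithfulness of $F$ together with its preservation of coproducts and the compactness of~$x_j$ in~$\cat{F}$ give
\[
\Homcat{G}\Big(c_j,\coprod_k Y_k\Big)\cong\Homcat{F}\Big(x_j,\coprod_k F(Y_k)\Big)\cong\coprod_k\Homcat{F}(x_j,F(Y_k))\cong\coprod_k\Homcat{G}(c_j,Y_k).
\]

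Assembling this via the adjunction, and using that $U$ preserves coproducts and $UF\cong\Id$,
\[
X\cong UF(X)\cong U\Big(\coprod_i x_i\Big)\cong\coprod_i U(x_i)\cong\coprod_i UF(c_i)\cong\coprod_i c_i,
\]
exhibits $X$ as a coproduct of objects of~$\cat{G}^c$, as required. The main technical step is the identification of each summand $x_i$ of $F(X)$ as $F(c_i)$ for some~$c_i\in\cat{G}^c$, which rests on idempotent completeness of~$\cat{G}$; the remaining steps are bookkeeping with the adjunction $F\adj U$ and elementary consequences of full-faithfulness.
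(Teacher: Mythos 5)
Your proof is correct and follows essentially the same route as the paper: lift the summands of $F(X)$ to $\cat{G}$ via idempotent completeness and full-faithfulness, detect their compactness from compactness of the $x_i$ together with $F$ being fully faithful and coproduct-preserving, and transfer $\otimes$-faithfulness along the faithful monoidal functor $F$. The only cosmetic difference is that you make the descent of the decomposition $F(X)\simeq\coprod_i x_i$ to $\cat{G}$ explicit via the right adjoint $U$ and the unit isomorphism $\Id\isoto UF$, where the paper instead (implicitly) applies conservativity of $F$ to the canonical map $\coprod_i c_i\to X$; both are fine.
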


\begin{proof}
As $F$ is fully-faithful and~$\cat{G}$ is idempotent-complete, for every $X\in\cat{G}$, every summand of $F(X)$ in~$\cat{F}$ must come from~$\cat{G}$ and it follows that if $F(X)\simeq\coprod_{i\in I}y_i$ in~$\cat{F}$ then each $y_i\simeq F(x_i)$ for $x_i\in\cat{G}$ and $X\simeq \coprod_{i\in I}x_i$ in~$\cat{G}$. As $F$ is monoidal, it preserves rigids, hence compacts. Conversely, if $F(x)$ is compact, it is clear from $F$ being fully-faithful and coproduct-preserving that $\Homcat{G}(x,-)\cong\Homcat{F}(F(x),F(-))$ will commute with coproducts; hence such $x\in\cat{G}$ is compact. We have shown that every $X\in\cat{G}$ is a coproduct of compacts. It is clear from $F$ being faithful and a $\otimes$-functor that every non-zero $X\in\cat{G}$ is $\otimes$-faithful.
\end{proof}

\goodbreak
\section{Abelian residues of tt-residue fields}
\label{se:fairyland}%
\medbreak

In this section we explore some consequences of the hypothetical existence of a tt-residue field. We emphasize those phenomena which match the results we proved on module categories in Sections~\ref{se:MT}-\ref{se:max}.

Let $\cat{T}$ be a `big' tt-category (Hypothesis~\ref{hyp:big}) which is~\emph{local} (Definition~\ref{def:local}). Let us then clarify what we tentatively mean by the existence of a tt-residue field.

\begin{Hyp}
\label{hyp:res}%
Assume we are given a coproduct preserving tt-functor
\[
F\colon \cat{T}\too \cat{F}
\]
satisfying the following three conditions:
\begin{enumerate}[\rm(1)]
\item
\label{it:res1}%
The tt-category $\cat{F}$ is a tt-field in the sense of Definition~\ref{def:field}. See also Section~\ref{se:field}.
\smallbreak
\item
\label{it:res2}
The functor $F$ is conservative on the compact-rigid objects, \ie the preimage of zero $\SET{x\in\cat{T}^c}{F(x)=0}$ is the unique closed point~$(0)$ of~$\SpcT$.
\smallbreak
\item
\label{it:res3}
Every object $Y$ of~$\cat{F}$ is a summand of the image $F(X)$ of some~$X\in\cat{T}$.
\end{enumerate}
\end{Hyp}

\begin{Rem}\label{rem:compactpres}
Any strong monoidal functor preserves rigid objects\textemdash{}having a dual is defined in terms of identity maps, composition, and tensor products, all of which are preserved by a strong monoidal functor. Thus it is automatic from the hypotheses (those above together with the always standing Hypothesis~\ref{hyp:big}) that $F$ sends compact objects to compact objects.
\end{Rem}

\begin{Rem}
We have already motivated Condition~\eqref{it:res2} in Section~\ref{se:intro}. It means that the induced map $\Spc(F)\colon\SpcF\to \SpcT$ sends the unique point of~$\SpcF$ (Proposition~\ref{prop:spcF}) to the unique closed point of~$\SpcT$, as one expects with residue fields. This property can be seen as a tt-generalization of Nakayama for local rings.

Condition~\eqref{it:res3} is a tentative and imperfect way to express that $\cat{F}$ is some sort of `quotient' of~$\cat{T}$, \ie that $\cat{F}$ is not too far from the image subcategory~$F(\cat{T})$. This is inadequate for several reasons, in particular because it does not prevent $F\colon\cat{T}\to \cat{F}$ from `overshooting the mark'. Indeed, even in commutative algebra, such an $\cat{F}$ could be a field extension of the actual residue field.

However, Condition~\eqref{it:res3} should be easy to verify in practice since $\cat{F}$ should be very small. For instance, it clearly holds if every object of~$\cat{F}$ is a coproduct of suspensions of~$\unit_{\cat F}=F(\unit_{\cat T})$, or a summand thereof, as in the topologists' definition.

An interesting open problem would be to replace Condition~\eqref{it:res3} by a more restrictive one which would capture the idea that~$\cat{F}$ is the `smallest' field into which~$\cat{T}$ maps, while still satisfying Condition~\eqref{it:res2}. This minimality could be stated by saying that any factorization of $F$ as $\cat{T}\to \cat{F}'\to \cat{F}$ via another tt-field~$\cat{F}'$ must be trivial: $\cat{F}'\isoto\cat{F}$. In view of Proposition~\ref{prop:full-subfield}, such `minimality' of~$\cat{F}$ easily implies that $\cat{F}$ is generated by~$F(\cat{T})$ as a localizing subcategory. On the other hand, it is not clear how much fullness of~$F$ could be obtained from minimality, nor whether Condition~\eqref{it:res3} would follow. Finally, it would be interesting to prove that any tt-functor to a field factors via a minimal one, but this also remains elusive. For these reasons, we do not include minimality among our hypotheses.
\end{Rem}

\begin{Not}
We denote by $U\colon\cat{F}\to \cat{T}$ the right adjoint to~$F$, which exists by Proposition~\ref{prop:BN-rep} and satisfies the projection formula~\eqref{eq:proj-formula}.
\end{Not}

\begin{Rem}
\label{rem:faithful-U}%
The rather mild Condition~\eqref{it:res3} already forces faithfulness of the right adjoint $U\colon\cat{F}\to \cat{T}$, as is well-known. Indeed, Condition~\eqref{it:res3} implies that the counit $\epsilon_Y\colon FU(Y)\to Y$ is a split epimorphism and thus every morphism $f\colon F(X)\to Y$ such that $U(f)=0$ must satisfy $f\circ\epsilon_{F(X)}=\epsilon_Y\circ FU(f)=0$, hence $f=0$.
\end{Rem}

Another immediate consequence of Condition~\eqref{it:res3} is the following:

\begin{Prop}
\label{prop:E-free}%
Under Hypothesis~\ref{hyp:res}, the adjunction~$F\adj U$ induces an equivalence
\[
(UF\,\textrm{\rm-Free}_{\cat{T}})^\natural \isotoo \cat{F}
\]
between the idempotent-completion of the Kleisli category of free modules over the monad~$UF$ on the category~$\cat{T}$ and the residue field~$\cat{F}$, in such a way that our functors $F\colon\cat{T}\to \cat{F}$ and $U\colon\cat{F}\to \cat{T}$ become respectively the free $UF$-module functor $\cat{T}\to (UF\,\textrm{\rm-Free}_{\cat{T}})^\natural$ and the underlying-object functor $(UF\,\textrm{\rm-Free}_{\cat{T}})^\natural \too\cat{T}$.
\end{Prop}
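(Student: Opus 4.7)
The plan is to verify this via the standard Kleisli comparison and then use Condition~\eqref{it:res3} of Hypothesis~\ref{hyp:res} to absorb the missing summands into the image of~$F$.

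First, recall that for any adjunction $F \dashv U$ there is a fully faithful \emph{Kleisli comparison functor} from the Kleisli category of $UF$-free modules into the codomain of~$F$. Concretely, the Kleisli category $UF\text{-Free}_{\cat{T}}$ has the same objects as~$\cat{T}$ and morphism sets $\Homcat{T}(X, UF(Y))$; the adjunction bijection $\Homcat{T}(X, UF(Y)) \cong \Homcat{F}(F(X), F(Y))$ shows that the assignment $X \mapsto F(X)$ and $f \mapsto \hat f$ extends to a fully faithful functor
\[
K\colon UF\text{-Free}_{\cat{T}} \too \cat{F}
\]
whose essential image is the full subcategory $F(\cat{T}) \subseteq \cat{F}$ of objects of the form $F(X)$. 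Under this identification, the free-module functor $\cat{T} \to UF\text{-Free}_{\cat{T}}$ becomes $F\colon\cat{T}\to F(\cat{T})$, and the underlying-object functor becomes the restriction of~$U$.

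Next, pass to idempotent completions. Since $\cat{F}$ is a `big' tt-category, it admits all coproducts and is therefore idempotent-complete (any idempotent splits using the Eilenberg swindle / the standard argument with countable coproducts). Consequently the idempotent completion of $F(\cat{T})$, taken abstractly, is equivalent to the full subcategory of~$\cat{F}$ consisting of summands of objects of the form~$F(X)$ with $X\in\cat{T}$. By Hypothesis~\ref{hyp:res}\eqref{it:res3}, every object of~$\cat{F}$ is such a summand, so this full subcategory is all of~$\cat{F}$. Combining with the previous paragraph yields the equivalence
\[
(UF\text{-Free}_{\cat{T}})^\natural \isotoo \cat{F},
\]
and the identifications of $F$ with the free-module functor and $U$ with the underlying-object functor are built into the construction.

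There is no substantial obstacle here: the statement is essentially formal, assembling the tautological Kleisli comparison with the hypothesis that~$F$ is `surjective up to direct summands'. The only mild point to check is the existence of idempotent splittings in~$\cat{F}$, which is guaranteed by Hypothesis~\ref{hyp:big}; and the verification that the right adjoint~$U$ restricts to the underlying-object functor on~$(UF\text{-Free}_{\cat{T}})^\natural$, which is immediate from the definition of the Kleisli category.
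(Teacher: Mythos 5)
Your proposal is correct and follows exactly the paper's argument: the Kleisli comparison functor is fully faithful by general category theory, and Condition~(3) of Hypothesis~\ref{hyp:res} makes it surjective up to direct summands, hence an equivalence after idempotent completion (using that $\cat{F}$, having coproducts, is already idempotent-complete). The extra detail you supply---spelling out the hom-set identification and the splitting of idempotents via Hypothesis~\ref{hyp:big}---is a faithful expansion of the same proof.
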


\begin{proof}
By general category theory~\cite[\S\,VI.5]{MacLane98}, the Kleisli comparison functor $K\colon UF\,\textrm{\rm-Free}_{\cat{T}} \to \cat{F}$ associated to the $(F,U)$-adjunction is fully-faithful. Our Condition~\eqref{it:res3} immediately implies that this functor~$K$ is surjective up to direct summand, hence an equivalence on idempotent-completions.
\end{proof}

\begin{Rem}
Given an exact monad $\bbM$, like the above $\bbM=UF$, on a triangulated category~$\cat{T}$, there is no guarantee that the Kleisli category or its idempotent-completion $(\bbM\textrm{\rm-Free}_{\cat{T}})^\natural$ be triangulated. In the above result, we heavily use that the monad is already realized by an exact adjunction of triangulated categories.
\end{Rem}

\begin{Not}
Let $\bbE=U(\unit_{\cat{F}})$ the image of the $\otimes$-unit of~$\cat{F}$ in~$\cat{T}$.
\end{Not}

\begin{Cor}
\label{cor:E}%
Under Hypotheses~\ref{hyp:res}, the object~$\bbE=U(\unit_{\cat{F}})$ of~$\cat{T}$ is equipped with the structure of a commutative ring object in~$\cat{T}$, coming from the fact that $U$ is lax-monoidal. We have an isomorphism of monads $UF(-)\cong \bbE\otimes-$ where the latter is a monad via the above ring structure. We have an equivalence of categories
\[
(\bbE\textrm{\rm-Free}_{\cat{T}})^\natural \isoto \cat{F}
\]
between the idempotent-completion of the Kleisli category of free $\bbE$-modules in~$\cat{T}$ and the tt-field~$\cat{F}$, in such a way that the functors $F\colon\cat{T}\to \cat{F}$ and $U\colon\cat{F}\to \cat{T}$ become the free $\bbE$-module and underlying-object functors respectively.
\end{Cor}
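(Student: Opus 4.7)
The plan is to combine Proposition~\ref{prop:E-free} with the projection formula~\eqref{eq:proj-formula} in order to identify the abstract monad $UF$ on $\cat{T}$ with the more concrete monad given by tensoring with the ring object $\bbE=U(\unit_{\cat{F}})$. Everything needed is at hand, so this is essentially a bookkeeping exercise.

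First, I would equip $\bbE$ with its commutative ring structure. Because $F$ is a tt-functor, hence strong symmetric monoidal, its right adjoint $U$ acquires a canonical lax symmetric monoidal structure: coherent natural morphisms $U(Y_1)\otimes U(Y_2)\to U(Y_1\otimes Y_2)$ and $\unit_{\cat{T}}\to U(\unit_{\cat{F}})=\bbE$. Specializing the former to $Y_1=Y_2=\unit_{\cat{F}}$ and composing with $U$ applied to the unitor of~$\cat{F}$ produces the multiplication $\mu\colon\bbE\otimes\bbE\to\bbE$, while the latter gives the unit $\eta\colon\unit_{\cat{T}}\to\bbE$. Associativity, unitality, and (using the symmetry of the tensor on~$\cat{F}$) commutativity are direct consequences of the coherence axioms for lax monoidal functors.

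Second, I would exhibit an isomorphism of monads $\bbE\otimes-\;\cong\;UF(-)$. Taking $Y=\unit_{\cat{F}}$ in the projection formula~\eqref{eq:proj-formula} of Proposition~\ref{prop:BN-rep} yields a natural isomorphism
\[
\varphi_X\colon\; X\otimes\bbE \;=\; X\otimes U(\unit_{\cat{F}}) \;\isoto\; U(F(X)\otimes\unit_{\cat{F}}) \;\cong\; UF(X).
\]
The substantive task is to verify that $\varphi$ intertwines the two monad structures: it should send the morphism $X=\unit_{\cat{T}}\otimes X\to\bbE\otimes X$ induced by $\eta$ to the unit $X\to UF(X)$ of the adjunction $F\adj U$, and the morphism $\bbE\otimes\bbE\otimes X\to\bbE\otimes X$ induced by $\mu$ to the multiplication $UFUF(X)\to UF(X)$ induced by the counit. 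This is the standard fact that the lax monoidal structure on a right adjoint to a strong monoidal functor encodes precisely the ring structure on $U(\unit)$ and its action on $UF$; it boils down to a diagram chase using the compatibility of the projection formula with the unit and counit of $F\adj U$.

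Finally, an isomorphism of monads induces an isomorphism of the corresponding Kleisli categories, and hence of their idempotent-completions. Combining this with Proposition~\ref{prop:E-free} gives the desired equivalence $(\bbE\,\textrm{-Free}_{\cat{T}})^\natural\isoto\cat{F}$, under which $F$ becomes the free-$\bbE$-module functor and $U$ becomes the underlying-object functor. The only mildly delicate step is the monad compatibility in paragraph two; I do not anticipate any conceptual obstacle, as this is a standard piece of doctrinal yoga for monoidal adjunctions that satisfy the projection formula.
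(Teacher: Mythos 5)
Your proposal is correct and follows essentially the same route as the paper: the ring structure on $\bbE$ comes from the lax-monoidal structure of the right adjoint $U$ (the paper phrases this as ``lax-monoidal functors preserve commutative ring objects''), the monad isomorphism $UF\cong\bbE\otimes-$ comes from the projection formula (the paper outsources the compatibility check you describe to \cite[Lemma~2.8]{BalmerDellAmbrogioSanders15}), and the equivalence of Kleisli categories is then read off from Proposition~\ref{prop:E-free}. No gaps.
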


\begin{proof}
The right-adjoint~$U$ to the monoidal functor~$F$ is lax-monoidal, hence preserves (commutative) ring objects. This explains the ring structure on~$\bbE=U(\unit)$. The isomorphism of monads is a general consequence of the projection formula; see~\cite[Lemma~2.8]{BalmerDellAmbrogioSanders15}. The statement now follows from Proposition~\ref{prop:E-free}.
\end{proof}

We now want to discuss the module-category analogue of the above.

\begin{Cons}
Our basic framework is outlined in the following diagram
\begin{equation}\label{eq:setup}
\vcenter{\xymatrix@C=4em{
\cat{T} \ar[r]^-{\yoneda} \ar[d]<-0.5ex>_-F \ar@{<-}[d]<0.5ex>^-U
& \MT =:\cat{A} \kern-2em \ar[d]<-0.5ex>_-{\hat{F}} \ar@{<-}[d]<0.5ex>^-{\hat{U}} \\
\cat{F} \ \ar@{^(->}[r]^-{\yoneda} & \MF
}}
\end{equation}
where the two vertical pairs of functors are adjoint. The functor $\hat{F}\colon\MT\to \MF$ is the exact colimit-preserving functor that $F$ induces by the universal property (Remark~\ref{rem:hatG}), that is, the left Kan extension $(F^c)_!$ along the restriction $F^c$ of $F$ to compacts. This functor~$\hat{F}$ is strong monoidal. Its right adjoint $\hat{U}\colon\MF\to \MT$ is the restriction $(F^c)^*$ along $F^c\colon \cat{T}^c \to \cat{F}^c$ or, equivalently, the exact colimit-preserving functor induced by the universal property of the module category but now applied to~$U$.
Exactness of the functor $\hat{F}$ implies in particular that $\hat{U}$ preserves injectives or, equivalently, that $U$ preserves pure-injectives.
\end{Cons}

\begin{Cor}
\label{cor:E-inj}%
The ring-object~$\bbE=U(\unit_{\cat{F}})$ is pure-injective in~$\cat{T}$.
\end{Cor}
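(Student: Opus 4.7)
The plan is to observe that this is essentially a one-line deduction combining two facts already in the paper: $\cat{F}$ is pure-semisimple, and $U$ preserves pure-injectives.

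First, I would note that $\cat{F}$ is a tt-field, so by Theorem~\ref{thm:semisimple}\,\eqref{it:ss-coprod} together with the implication \eqref{it:ss-coprod}\,$\Rightarrow$\,(i) there, the category $\cat{F}$ is pure-semisimple. In particular the $\otimes$-unit $\unit_{\cat{F}}$ is pure-injective in $\cat{F}$.

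Next I would invoke the observation recorded in the construction preceding the corollary: the functor $\hat{F}\colon \MT \to \MF$ is exact (being strong monoidal and colimit-preserving between locally presentable abelian categories, induced from the tt-functor $F^c\colon\cat{T}^c\to\cat{F}^c$ by left Kan extension). Its right adjoint $\hat{U} = (F^c)^\ast$ therefore preserves injective objects. Translating back through Remark~\ref{rem:pure-inj}, which identifies pure-injectives of $\cat{T}$ (resp.\ $\cat{F}$) with injectives of $\cat{A}=\MT$ (resp.\ $\MF$) via restricted-Yoneda, this exactly says that $U\colon\cat{F}\to\cat{T}$ carries pure-injectives to pure-injectives.

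Applying this to $\unit_{\cat{F}}$ yields that $\bbE = U(\unit_{\cat{F}})$ is pure-injective in $\cat{T}$, as claimed. There is no real obstacle here: the two ingredients, pure-semisimplicity of a tt-field and the fact that $U$ preserves pure-injectives, are both already established above, and the proof amounts to combining them. If one wanted to make the second ingredient fully explicit rather than citing the construction, one would note that $\widehat{U(\unit_{\cat F})} \cong \hat U(\widehat{\unit_{\cat F}})$ (since restricted-Yoneda commutes with the adjunction by construction of $\hat U$), and that $\widehat{\unit_{\cat F}}$ is injective in $\MF$ by pure-semisimplicity, whence $\hat{\bbE}$ is injective in $\MT$, which is the definition of $\bbE$ being pure-injective.
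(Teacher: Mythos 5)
Your proof is correct and follows exactly the paper's argument: the paper's own proof reads ``Every object of~$\cat{F}$ is pure-injective and $U$ preserves pure-injectives,'' which are precisely your two ingredients (pure-semisimplicity of the tt-field via Theorem~\ref{thm:semisimple}, and preservation of pure-injectives by $U$ as recorded in the construction of $\hat U$). You have merely unpacked the same one-line deduction in more detail.
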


\begin{proof}
Every object of~$\cat{F}$ is pure-injective and $U$ preserves pure-injectives.
\end{proof}

The following lemma is immediate from the above discussion of the functor $\hat{U}$. However, we state it and indicate the proof for psychological reasons.
\begin{Lem}
Given $c\in \cat{F}^c$ there is a natural isomorphism $\hat{U}(\hat{c}) \cong \widehat{U(c)}$.
\end{Lem}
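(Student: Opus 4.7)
The plan is essentially to unwind both sides using the explicit description of $\hat U$ given in the construction preceding the lemma, and then to invoke the $F \dashv U$ adjunction. Recall that $\hat U$ has been identified with restriction along $F^c \colon \cat T^c \to \cat F^c$. So for any $M \in \MF$ and any $x \in \cat T^c$ one has the tautological formula
\[
(\hat U M)(x) \;=\; M\bigl(F^c(x)\bigr) \;=\; M\bigl(F(x)\bigr).
\]

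Applying this to $M = \hat c$ with $c \in \cat F^c$ gives $(\hat U \hat c)(x) = \hat c(F(x)) = \Homcat F(F(x), c)$. The $F \dashv U$ adjunction now furnishes a natural isomorphism
\[
\Homcat F(F(x), c) \;\cong\; \Homcat T(x, U(c)) \;=\; \widehat{U(c)}(x).
\]
This chain of isomorphisms is natural in $x \in \cat T^c$ (by naturality of the adjunction unit and counit in the first variable), so it assembles into an isomorphism of $\cat T^c$-modules $\hat U(\hat c) \cong \widehat{U(c)}$. Naturality in $c$ is equally clear from the naturality of adjunction in its second variable.

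There is no real obstacle here — the lemma is a direct combination of the definition of $\hat U$ as a restriction functor and the very definition of what it means for $U$ to be right adjoint to $F$. The only thing worth flagging is that one needs $c$ to be compact (so that $\hat c$ is defined on $\cat F^c$ via the Yoneda embedding) and $U(c) \in \cat T$ (so that $\widehat{U(c)}$ makes sense via restricted-Yoneda); compactness of $U(c)$ is not required for the statement. As the authors indicate, the real content is psychological: $\hat U$ really is the module-theoretic shadow of $U$, and this lemma is the compatibility check that justifies dropping the hat when the argument is a representable coming from a compact object.
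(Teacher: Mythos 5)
Your proof is correct and is exactly the paper's argument: unwind $\hat U(\hat c)$ as $\Homcat{F}(F(-),c)$ via the definition of $\hat U$ as restriction along $F^c$, then apply the $F\dashv U$ adjunction to identify it with $\Homcat{T}(-,U(c))=\widehat{U(c)}$. No differences worth noting.
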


\begin{proof}
There is a chain of isomorphisms $\hat{U}(\hat{c}) = \cat{F}(F-,c) \cong \cat{T}(-, Uc) = \widehat{U(c)}$, the first by the definition of $\hat{U}$ as the restriction along $F^c$, the second by adjunction, and the third again by definition.
\end{proof}

\begin{Lem}\label{lem:U-faithful}
The functor $\hat{U}\colon\MF\to \MT$ is faithful.
\end{Lem}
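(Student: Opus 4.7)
The plan hinges on the explicit description $\hat{U}=(F^c)^{*}$ as restriction along $F^c\colon\cat{T}^c\to\cat{F}^c$: for a morphism $\alpha\colon M\to N$ in~$\MF$ and $x\in\cat{T}^c$, one has $\hat{U}(\alpha)_x=\alpha_{F(x)}$. Hence $\hat{U}(\alpha)=0$ is equivalent to $\alpha_{F(x)}=0$ for every $x\in\cat{T}^c$, and our task reduces to showing that this forces $\alpha_y=0$ for every $y\in\cat{F}^c$. The crucial geometric input that bridges these two statements is the following retract claim: every compact object of~$\cat{F}$ appears as a retract of $F(x)$ for some $x\in\cat{T}^c$.

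I would establish the retract claim first. By Hypothesis~\ref{hyp:res}\eqref{it:res3} a compact $y\in\cat{F}^c$ is a retract of some $F(X)$ with $X\in\cat{T}$, witnessed by morphisms $s\colon y\to F(X)$ and $r\colon F(X)\to y$ with $rs=1_y$. Since $\cat{T}$ is compactly generated, $X$ can be expressed as a homotopy colimit of a directed system $(x_i)_{i\in I}$ of compact objects in~$\cat{T}^c$; because $F$ preserves arbitrary coproducts, $F(X)$ is the homotopy colimit of the $F(x_i)$ in~$\cat{F}$. Compactness of $y$ in~$\cat{F}$ then forces~$s$ to factor through some $F(x_i)$, say as $y\xrightarrow{s'}F(x_i)\xrightarrow{F(\iota_i)}F(X)$, and the composite $r\circ F(\iota_i)\colon F(x_i)\to y$ retracts~$s'$. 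Thus $y$ is a retract of~$F(x_i)$ with $x_i\in\cat{T}^c$, as required.

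Granted the retract claim, the faithfulness argument becomes a routine naturality chase. Pick $s\colon y\to F(x)$ and $r\colon F(x)\to y$ with $rs=1_y$. Naturality of $\alpha\colon M\to N$ applied to the contravariant functors $M,N$ at~$s$ yields $\alpha_y\circ M(s)=N(s)\circ\alpha_{F(x)}$, while $M(s)\circ M(r)=M(rs)=\id_{M(y)}$ makes $M(s)$ a split epimorphism. Combining these gives
\[
\alpha_y=\alpha_y\circ M(s)\circ M(r)=N(s)\circ\alpha_{F(x)}\circ M(r)=0,
\]
since $\alpha_{F(x)}=\hat{U}(\alpha)_x=0$ by assumption. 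As $y\in\cat{F}^c$ was arbitrary, we conclude $\alpha=0$ and $\hat{U}$ is faithful.

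The only genuine obstacle is the retract claim itself; the naturality computation is automatic once it is in place. One could alternatively phrase the retract step via Brown representability or as an observation that $F^c$ is \emph{essentially surjective up to retracts}, but in any case this is where the hypothesis that $F$ preserves coproducts and that $\cat{T}$ is compactly generated earns its keep.
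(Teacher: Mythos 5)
Your reduction of the statement to ``$\alpha_{F(x)}=0$ for all $x\in\cat{T}^c$ implies $\alpha_y=0$ for all $y\in\cat{F}^c$'', and the naturality chase through a retraction, are both correct. The problem is the retract claim, which is where the entire content of your proof sits, and your justification of it does not work. In a bare compactly generated triangulated category one cannot write an arbitrary object $X$ as ``the homotopy colimit of a directed system of compact objects'': non-sequential homotopy colimits are not even defined without an enhancement, and the sequential tower $X_0\to X_1\to\cdots$ with $\mathrm{hocolim}\,X_n\cong X$ that Neeman's construction provides has stages $X_n$ that are (infinite) coproducts of compacts and iterated cones thereon, hence \emph{not} compact. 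Compactness of $y$ does let $s$ factor through some $F(X_n)$, but that only exhibits $y$ as a retract of $F(X_n)$ with $X_n$ non-compact, which is no better than Hypothesis~\ref{hyp:res}\,(3) itself. To actually land on a compact $x$ you would have to run the Neeman--Thomason shrinking argument (replacing the infinite coproducts at each finite stage by finite sub-coproducts, using that all attaching maps live in $\cat{T}$ so that the shrunken cones are still of the form $F(-)$ of a compact). That is a genuine lemma requiring a careful induction, not the one-line factorization you give; note in particular that a compact object of $\cat{F}$ lying in $\mathrm{thick}(F(\cat{T}^c))$ need not obviously be a retract of a single $F(x)$, because cones of morphisms $F(x)\to F(x')$ that are not in the image of $F$ a priori escape the image.

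The paper avoids this issue entirely by never asking for a compact preimage. It shows instead that the counit $\epsilon\colon\hat{F}\hat{U}\to\Id_{\MF}$ is an epimorphism: for $c\in\cat{F}^c$ one has $\hat{F}\hat{U}(\hat c)\cong\widehat{FU(c)}$ and $\epsilon_{\hat c}=\widehat{\epsilon_c}$, and $\epsilon_c\colon FU(c)\to c$ is a \emph{split} epimorphism by Hypothesis~\ref{hyp:res}\,(3) (Remark~\ref{rem:faithful-U}) --- here $U(c)$ is allowed to be non-compact because $\widehat{U(c)}$ is a perfectly good object of $\MT$. One then extends to all of $\MF$ using that every module is a quotient of a coproduct of representables and that $\hat{F}\hat{U}$ is right exact and coproduct-preserving. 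If you want to keep your formulation, the cheapest repair is exactly this: apply your naturality chase not to a retraction $y\rightleftarrows F(x)$ with $x$ compact, but to the split epimorphism $\widehat{FU(y)}\onto\hat y$ in $\MF$, using $\hat{U}(\alpha)=0\Rightarrow \alpha_{\widehat{FU(y)}}\circ(\text{unit})=0$ in the module category rather than pointwise on $\cat{F}^c$.
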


\begin{proof}
Proving $\hat{U}$ is faithful is equivalent to showing that the counit $\epsilon\colon\hat{F}\hat{U}\to \Id$ of the adjunction between $\hat{F}$ and $\hat{U}$ is an epimorphism. For finitely presented projectives this is clear from the fact that $U$ is faithful: if $c\in \cat{F}^c$ we have $\hat{F}\hat{U}(\hat c)\cong \widehat{FU(c)}$ and under this identification $\epsilon$ is $\hat{\epsilon}$. Given this, we can use naturality of~$\hat\epsilon$, the fact that $\hat{F}$ and $\hat{U}$ are coproduct-preserving, and the fact that every object $M\in\MT$ is a quotient of a coproduct of finitely-presented projectives to deduce that every $\hat\epsilon_{M}$ is an epimorphism.
\end{proof}

\begin{Prop}
\label{prop:hatUmonadic}%
The object $\hat{\bbE}=\hat{U}(\hat\unit_{\cat{F}})$ of $\cat{A}=\MT$ is a commutative ring-object, which is injective. Moreover, $\hat{U}$ is monadic and identifies $\MF$ with the Eilenberg-Moore category $\hat{\bbE}\textrm{-}\Mod_{\cat{A}}$ of modules in $\cat{A}$ over this ring-object.
\end{Prop}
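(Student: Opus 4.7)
The plan is to address the four claims in sequence. First, the functor $\hat{F}$ is strong monoidal---one checks on representables that $\hat{F}(\hat{x}\otimes \hat{y}) = \widehat{F(x\otimes y)} = \hat{F}(\hat{x})\otimes \hat{F}(\hat{y})$ for $x,y\in\cat{T}^c$, and both sides preserve colimits in each variable, so this extends to all of $\cat{A}$. Its right adjoint $\hat{U}$ is then automatically lax monoidal, and so $\hat{\bbE}=\hat{U}(\hat\unit_{\cat F})$ acquires a canonical commutative ring structure from the ring structure on $\hat\unit_{\cat F}$. For injectivity of $\hat{\bbE}$: every object of a tt-field is a coproduct of compacts by Definition~\ref{def:field}, so Theorem~\ref{thm:semisimple} makes $\cat{F}$ pure-semisimple; thus $\unit_{\cat F}$ is pure-injective and $\hat\unit_{\cat F}$ is injective in $\MF$ by Remark~\ref{rem:pure-inj}. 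Since $\hat{U}$ preserves injectives (its left adjoint $\hat{F}$ is exact, as already noted in the construction), the object $\hat{\bbE}$ is injective in~$\cat{A}$.

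Next I establish monadicity using Beck's theorem. The functor $\hat{U}$ has left adjoint $\hat{F}$, and is faithful by Lemma~\ref{lem:U-faithful}. The key extra ingredient is that $\hat{U}$ is exact: described as the restriction functor $(F^c)^*$ along $F^c\colon \cat{T}^c\to \cat{F}^c$, it admits both a left and a right adjoint (the Kan extensions $(F^c)_!=\hat{F}$ and $(F^c)_*$), hence preserves all colimits and limits. A faithful exact additive functor between abelian categories is conservative (for any $A\ne 0$, $\hat{U}(\id_A)\ne 0$, so $\hat{U}$ detects zero objects, hence isomorphisms via kernels and cokernels), and being exact it preserves all coequalizers. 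Beck's theorem then yields an equivalence between $\MF$ and the Eilenberg--Moore category $\MT^T$, where $T=\hat{U}\hat{F}$.

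It remains to identify $T$ with $\hat{\bbE}\otimes-$ as a monad. Both functors are colimit-preserving endofunctors of $\cat{A}$, so a natural isomorphism on representables suffices: for $c\in\cat{T}^c$,
\[
T(\hat c) \;=\; \hat{U}(\widehat{F c}) \;=\; \widehat{UF c} \;\cong\; \widehat{\bbE\otimes c} \;\cong\; \hat{\bbE}\otimes \hat{c},
\]
using the formulas $\hat{F}(\hat{c}) = \widehat{F(c)}$ and $\hat{U}(\hat{d})=\widehat{U(d)}$ of the preceding lemma, the monad isomorphism $UF \cong \bbE\otimes-$ from Corollary~\ref{cor:E}, and the Day-convolution identity $\hat{\bbE}\otimes\hat{c}\cong\widehat{\bbE\otimes c}$ that follows from Lemma~\ref{lem:compacttensor} (applied with $c\in\cat{T}^c$ rigid). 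The main delicate point, and what I expect to be the principal obstacle, is verifying that the unit and multiplication of the monad $T$ transport under this isomorphism to the unit and multiplication of the ring object $\hat{\bbE}$ constructed in the first paragraph: this requires tracking the $\hat{F}\dashv \hat{U}$ adjunction through the lax monoidal structure on $\hat{U}$, a routine but careful diagram chase. Once this coherence is established, the Eilenberg--Moore category $\MT^T$ is by definition $\hat{\bbE}\textrm{-}\Mod_{\cat{A}}$, and combined with monadicity this gives the desired identification of $\MF$ with $\hat{\bbE}\textrm{-}\Mod_{\cat{A}}$.
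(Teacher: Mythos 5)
Your proof is correct and follows essentially the same route as the paper: faithfulness of $\hat{U}$ (Lemma~\ref{lem:U-faithful}) plus Beck's theorem for exact faithful limit-preserving functors between Grothendieck categories (this is exactly Proposition~\ref{prop:monadic} in the appendix, which you re-derive), and identification of the monad $\hat{U}\hat{F}$ with $\hat{\bbE}\otimes-$ via Corollary~\ref{cor:E}. The coherence of unit and multiplication that you flag as the ``principal obstacle'' is precisely what the projection-formula argument behind Corollary~\ref{cor:E} (i.e.\ the standard fact that a strong monoidal left adjoint satisfying the projection formula yields $UF\cong U(\unit)\otimes-$ \emph{as monads}) delivers, so the paper leaves it at the same level of detail as you do.
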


\begin{proof}
The functor $\hat{U}$ is faithful by Lemma~\ref{lem:U-faithful} and so we can apply Proposition~\ref{prop:monadic} to deduce that it is monadic. All that remains is to identify the monad $\hat{U}\hat{F}$ with the monad associated to $\hat{U}(\hat\unit_{\cat{F}})\cong\hat\bbE$. This is a consequence of Corollary~\ref{cor:E}.
\end{proof}

\begin{Rem}
It follows, more or less, from Remark~\ref{rem:loc-fin} that the object $\bbE$ is not just pure-injective, but actually endofinite. Indeed, by said remark the object $\unit_{\cat{F}}$ is endofinite, by virtue of the pure-semisimplicity of $\cat{F}$. To conclude that $\bbE$ is also endofinite it is enough to note that $U$ preserves endofiniteness. This follows from the fact that $U$ preserves products (as a right adjoint) and coproducts (its left adjoint preserves compacts cf.\ Remark~\ref{rem:compactpres}), see for instance \cite[Corollary~3.8]{KrauseReichenbach01}.
\end{Rem}

\begin{center}
*\ *\ *
\end{center}

We now wish to construct, in the spirit of Sections~\ref{se:MT} and \ref{se:E}, a quotient of $\cat{A}=\MT$ which is closer to~$\MF$.

\begin{Prop}
\label{prop:KerF}%
Under Hypothesis~\ref{hyp:res}, consider the localizing $\otimes$-ideal $\Ker(\hat{F})$ of~$\cat{A}=\MT$ and $\cat{B}=\Ker(\hat{F})\cap\cat{A}\fp$, its Serre $\otimes$-ideal subcategory of finitely presented objects. Then $\Ker(\hat{F})$ is locally finitely presented, that is,
\[
\Ker(\hat{F})=\overrightarrow{\cat{B}}.
\]
\end{Prop}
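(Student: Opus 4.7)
The inclusion $\overrightarrow{\cat{B}} \subseteq \Ker(\hat{F})$ is immediate: the right-hand side is a localizing $\otimes$-ideal of $\cat{A}=\MT$ containing $\cat{B}$, hence contains the smallest such. The substance is the reverse inclusion, and the plan is a standard locally-coherent bookkeeping argument.

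The plan rests on two preliminary observations. First, $\hat{F}$ preserves finite presentation: since $F$ is strong monoidal it preserves rigidity, hence compactness (Remark~\ref{rem:compactpres}), so on representables one has $\hat F(\hat x)=\widehat{F(x)}\in(\MF)\fp$; exactness of $\hat F$ and the fact that every f.p.\ object of $\cat{A}$ is the cokernel of a morphism of representables then yield $\hat F(\cat{A}\fp)\subseteq(\MF)\fp$. Second, $\cat{A}=\MT$ is locally coherent, so $\cat{A}\fp=\mT$ is an abelian subcategory, and in particular contains the image of any morphism between its objects.

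Now let $M\in\Ker(\hat{F})$ and write $M=\colim_i M_i$ as a filtered colimit of objects $M_i\in\cat{A}\fp$. Then $0=\hat F(M)=\colim_i \hat F(M_i)$ in $\MF$. Each $\hat F(M_i)$ being f.p.\ in $\MF$, the standard property of finitely presented objects in a vanishing filtered colimit produces, for every index $i$, some $j(i)\geq i$ such that the transition map $\hat F(M_i)\to\hat F(M_{j(i)})$ is zero. Define $K_i:=\img(M_i\to M_{j(i)})$. By local coherence, $K_i\in\cat{A}\fp$, and exactness of $\hat F$ yields $\hat F(K_i)=\img(\hat F(M_i)\to \hat F(M_{j(i)}))=0$. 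Hence $K_i\in\cat{B}$.

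To conclude, let $\bar M_i\subseteq M$ denote the image of the composite $K_i\hookrightarrow M_{j(i)}\to M$, which coincides with the image of $M_i\to M$. Each $\bar M_i$ is a quotient of $K_i\in\cat{B}$, so it lies in the Serre subcategory $\overrightarrow{\cat{B}}$. The $\bar M_i$ form a filtered family of subobjects of $M$ whose union is all of $M$ (because already the $M_i$ cover $M$), so $M=\colim_i \bar M_i$ lies in $\overrightarrow{\cat{B}}$ by closure of a localizing subcategory under filtered colimits. The only somewhat delicate point is the use of local coherence to upgrade each $K_i$ from a mere finitely generated subobject into $\cat{A}\fp$, and hence into $\cat{B}$; the rest is formal filtered-colimit bookkeeping.
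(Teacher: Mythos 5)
Your proof is correct, and it takes a genuinely different route from the paper's. The paper deduces the statement from a general fact (Proposition~\ref{prop:fp-kernel}): for an exact colimit-preserving functor between locally coherent Grothendieck categories that preserves finitely presented objects, the kernel is generated by its finitely presented part. The proof there proceeds by first passing to the Gabriel quotient of $\cat{A}$ by the localizing subcategory generated by $\Ker \hat F\cap\cat{A}\fp$, thereby reducing to the case $\Ker \hat F\cap\cat{A}\fp=0$, and then showing faithfulness of the induced functor by an adjoint-functor argument: the right adjoint preserves filtered colimits, the unit of the adjunction is a monomorphism on finitely presented objects, and monomorphisms are stable under filtered colimits in a Grothendieck category. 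You instead argue directly: write $M\in\Ker\hat F$ as a filtered colimit of finitely presented objects, use finite presentation of the $\hat F(M_i)$ to kill transition maps after applying $\hat F$, and use local coherence to place the images $K_i$ in $\cat{B}$, exhibiting $M$ explicitly as a filtered colimit of quotients of objects of $\cat{B}$. Your argument has the merit of being self-contained and of sidestepping the reduction step in the paper's proof, which tacitly requires that the Gabriel quotient remain locally coherent and that the induced functor inherit all the hypotheses; it also produces the generators of $\Ker\hat F$ explicitly, which matches well with the paper's concrete description of $\overrightarrow{\cat{B}}$ in~\eqref{eq:B^to}. The paper's route, in exchange, yields faithfulness of the induced functor on the quotient as a by-product. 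Both arguments correctly rely on the two preliminary facts you isolate: that $\hat F$ preserves finite presentation (via Remark~\ref{rem:compactpres} and right-exactness) and that $\cat{A}\fp$ is abelian.
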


\begin{proof}
This follows from the fact that $\hat{F}$ preserves finitely presented objects and a general Grothendieck category argument. See Proposition~\ref{prop:fp-kernel}.
\end{proof}

\begin{Cons}
We can extend our diagram (\ref{eq:setup}) by factoring $\hat{F}$ as follows
\begin{equation}\label{eq:setup2}
\vcenter{\xymatrix@R=2em{
\cat{T} \ar[r]^-{\yoneda} \ar[dd]<-0.5ex>_-F \ar@{<-}[dd]<0.5ex>^-U
& \cat{A}=\MT \ar[dd]<-0.5ex>_-{\hat{F}} \ar@{<-}[dd]<0.5ex>^-{\hat{U}} \ar@{<-}[dr]<0.5ex>^-R \ar[dr]<-0.5ex>_-Q &
\\
&& \MT/\Ker(\hat F) =:\bat{A} \kern-2em \ar[dl]<-0.5ex>_-{\bar{F}} \ar@{<-}[dl]<0.5ex>^-{\bar{U}}
\\
\cat{F} \ar[r]^-{\yoneda}
& \MF &
}}
\end{equation}
Here $Q$ is the Gabriel quotient and~$R$ its right adjoint as in Proposition~\ref{prop:A/B}. The functor~$\bar F$ is characterized by~$\bar F\circ Q=\hat F$ and is therefore exact. By uniqueness of right adjoints, we must have $\hat U\cong R\circ \bar{U}$, and therefore, applying $Q(-)$, we get
\[
\bar U \cong Q\circ \hat{U}\,.
\]
\end{Cons}

\begin{Lem}\label{lem:barUmonadic}
In the above diagram, the functor $\bar{F}\colon\bat{A}\to \MF$ is faithful and strong monoidal, and $\bar{U}$ is faithful. In particular, $\bar{U}$ is monadic and we can identify $\MF$ with the Eilenberg-Moore category $\bar{\bbE}\textrm{-}\Mod_{\bat{A}}$ of $\bar{\bbE}$-modules in $\bat{A}$ over the ring-object $\bar{\bbE}=\bar{U}(\hat\unit_{\cat{F}})\cong Q\yoneda(\bbE)$.
\end{Lem}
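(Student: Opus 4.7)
The plan is to leverage the relations $\bar{F}\circ Q = \hat{F}$ and $\bar{U} \cong Q\circ \hat{U}$ to transfer the properties of the $\hat F\dashv\hat U$ adjunction established in Proposition~\ref{prop:hatUmonadic} down to $\bar F\dashv\bar U$.

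First I would establish faithfulness of $\bar{F}$. From $\bar F Q = \hat F$ and exactness of both $\hat F$ and $Q$ (plus essential surjectivity of $Q$), the functor $\bar F$ is exact. By Proposition~\ref{prop:KerF}, $\Ker(\hat F) = \overrightarrow{\cat B} = \Ker(Q)$, so $\bar F(X)=0$ combined with $X\cong QM$ forces $M\in\Ker(Q)$ and thus $X=0$. An exact functor between abelian categories with trivial kernel is faithful: if $\bar F(f)=0$ then $\bar F(\img f)=\img(\bar F f)=0$, hence $\img f = 0$ and $f=0$. Strong monoidality of $\bar F$ is obtained by descent: since $Q$ is strong monoidal (Proposition~\ref{prop:A/B}(c)) and essentially surjective, the structure isomorphisms $\hat F(M\otimes N)\cong\hat F M\otimes \hat F N$ transfer uniquely across $Q$ to give the required coherent isomorphisms for $\bar F$.

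Next I would prove faithfulness of $\bar{U}$. Suppose $f\colon M\to N$ in $\MF$ satisfies $\bar{U}(f) = Q\hat{U}(f) = 0$, so that $\img(\hat{U}f)\hookrightarrow \hat{U}(N)$ lies in $\Ker(Q)$. But $\hat{U}(N)\cong R\bar{U}(N)$ belongs to the essential image of the fully faithful right adjoint $R$, and such objects have no non-zero subobject in $\Ker(Q)$: indeed, if $Z\hookrightarrow R(Y)$ with $QZ=0$, then by the $Q\dashv R$ adjunction $\Homcat{A}(Z, RY)\cong\Hom_{\bat A}(QZ, Y)=0$, which forces $Z=0$. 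Hence $\img(\hat U f)=0$, so $\hat{U}(f)=0$, and Lemma~\ref{lem:U-faithful} yields $f=0$. Monadicity of $\bar{U}$ then follows from Proposition~\ref{prop:monadic}, exactly as in the proof of Proposition~\ref{prop:hatUmonadic}.

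Finally I would identify the monad $\bar{U}\bar{F}$ with $\bar{\bbE}\otimes-$. Using $Fc\in\cat F^c$ (Remark~\ref{rem:compactpres}), Corollary~\ref{cor:E}, the identity $\hat{U}(\hat d)\cong\widehat{Ud}$, and Lemma~\ref{lem:compacttensor}, one computes on representables
\[
\hat{U}\hat{F}(\hat c)\;\cong\;\widehat{UF(c)}\;\cong\;\widehat{\bbE\otimes c}\;\cong\;\hat{\bbE}\otimes\hat c.
\]
Both $\hat{U}\hat{F}$ and $\hat{\bbE}\otimes-$ are colimit-preserving endofunctors of $\cat{A}$, so this natural isomorphism on the generating representables extends to an isomorphism $\hat{U}\hat{F}\cong\hat{\bbE}\otimes-$ on all of $\cat{A}$. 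Applying the exact strong monoidal $Q$ and using $\bar{U}=Q\hat{U}$, $\hat{F}=\bar{F}Q$, we obtain
\[
\bar{U}\bar{F}(QM)\;=\;Q\hat{U}\hat{F}(M)\;\cong\;Q(\hat{\bbE}\otimes M)\;\cong\;\bar{\bbE}\otimes QM,
\]
natural in $M\in\cat{A}$, so that $\bar{U}\bar{F}\cong\bar{\bbE}\otimes-$ by essential surjectivity of $Q$. Clearly $\bar{\bbE}=\bar{U}(\hat\unit_{\cat{F}})=Q\hat{U}(\hat\unit_{\cat{F}})\cong Q\yoneda(\bbE)$, and the monadic comparison functor identifies $\MF$ with $\bar{\bbE}\textrm{-}\Mod_{\bat{A}}$.

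The main obstacle I anticipate is the faithfulness of $\bar{U}$: although $Q$ is very far from being faithful on $\cat{A}$, the crucial point is that every object in the image of $\hat{U}$ is already $Q$-closed (lies in the essential image of $R$), so the subobjects detected by $Q$ suffice to see all morphisms ending at such an object.
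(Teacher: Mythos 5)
Your proof is correct and follows essentially the same route as the paper: faithfulness of $\bar F$ from exactness plus conservativity, strong monoidality by descent along the monoidal localization $Q$, faithfulness of $\bar U$ reduced to Lemma~\ref{lem:U-faithful}, Beck monadicity via Proposition~\ref{prop:monadic}, and the computation $\bar U\bar F Q\cong Q(\hat\bbE\otimes-)\cong\bar\bbE\otimes Q$. The only (minor) divergence is that for faithfulness of $\bar U$ the paper uses the other factorization $\hat U\cong R\bar U$, which gives $\bar U(f)=0\Rightarrow\hat U(f)=0$ in one step, whereas you take the correct but longer detour through $\bar U\cong Q\hat U$ and the fact that objects in the image of $R$ have no non-zero subobjects killed by $Q$.
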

\begin{proof}
It is clear that the functor $\bar{F}$ is faithful since it is conservative and exact (between abelian categories) and it is strong monoidal as is a factorization of the strong monoidal functor $\hat{F}$ through the monoidal localization corresponding to its kernel. We have seen in Lemma~\ref{lem:U-faithful} that $\hat{U}$ is faithful and so it follows from the isomorphism $\hat{U} \cong R\bar{U}$ that $\bar{U}$ is faithful. By Beck's monadicity theorem (Proposition~\ref{prop:monadic}) this shows $\bar{U}$ is monadic. Hence $\MF$ is the category of modules in~$\bat{A}$ over the monad $\bar{U}\bar{F}$. Now, $\bar{U}\bar{F}Q\cong Q\hat{U}\hat{F}\cong Q\widehat{UF}\cong Q(\hat{\bbE}\otimes-)\cong \bar{\bbE}\otimes Q$ and therefore the monad $\bar{U}\bar{F}\cong \bar{\bbE}\otimes-$ is again `monoidal' given by the ring $\bar{\bbE}$.
\end{proof}

\begin{Prop}
\label{prop:B-max}%
Under Hypothesis~\ref{hyp:res}, the Serre $\otimes$-ideal $\cat{B}=\Ker(\hat{F})\fp$ of~$\cat{A}\fp$ is $\cat{T}^c$-maximal (Definition~\ref{def:max}), \ie it is maximal among the Serre $\otimes$-ideals of~$\mT$ which do not contain any non-zero $\hat c$ for $c\in\cat{T}^c$.
\end{Prop}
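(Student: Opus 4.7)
The plan has three parts. First I check that $\cat{B}$ meets $\yoneda(\cat{T}^c)$ trivially: if $\hat{c}\in\cat{B}=\Ker(\hat{F})\cap\cat{A}\fp$ then $\widehat{F(c)}=\hat{F}(\hat{c})=0$ in $\mF$, whence $F(c)=0$ by full faithfulness of the Yoneda embedding on $\cat{F}^c$ and therefore $c=0$ by Hypothesis~\ref{hyp:res}\eqref{it:res2}.

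For maximality, suppose $\cat{C}\supsetneq\cat{B}$ is a Serre $\otimes$-ideal of $\cat{A}\fp$ with $\cat{C}\cap\yoneda(\cat{T}^c)=0$, aiming for a contradiction. Pick $M\in\cat{C}\setminus\cat{B}$, so $\bar{M}=Q(M)$ is non-zero in $\bat{A}\fp$ and $\bar{F}(\bar{M})=\hat{F}(M)$ is non-zero in $\mF$. Since $\cat{F}$ is a tt-field, Theorem~\ref{thm:field}\eqref{it:field-Serre} gives $\langle\bar{F}(\bar{M})\rangle_{\mF}=\mF$, so in particular $\widehat{\unit_{\cat{F}}}\in\langle\bar{F}(\bar{M})\rangle_{\mF}$. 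The key step is to lift this containment through the exact right adjoint $\bar{U}\colon\MF\to\bat{A}$. Because $\bar{F}$ is strong monoidal with right adjoint $\bar{U}$, the projection formula $\bar{U}(\bar{F}(X)\otimes L)\cong X\otimes\bar{U}(L)$ holds; together with exactness of $\bar{U}$, this transports the Serre $\otimes$-ideal $\langle\bar{F}(\bar{M})\rangle_{\mF}$ into the Serre $\otimes$-ideal $\langle\bar{M}\rangle_{\bat{A}}$. Hence $\bar{\bbE}=\bar{U}(\widehat{\unit_{\cat{F}}})\in\langle\bar{M}\rangle_{\bat{A}}$.

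The unit $\bar{\eta}\colon\bar{\unit}\to\bar{\bbE}$ of the $\bar{F}\dashv\bar{U}$ adjunction is a monomorphism: its image $\bar{F}(\bar{\eta})$ is a split monomorphism by the triangle identity, and the faithful exact functor $\bar{F}$ (Lemma~\ref{lem:barUmonadic}) reflects monos. So $\bar{\unit}$ is a subobject of $\bar{\bbE}$ and hence also lies in $\langle\bar{M}\rangle_{\bat{A}}$. Local coherence of $\bat{A}$ (Proposition~\ref{prop:A/B}) combined with the finite presentation of $\bar{\unit}$ allows the containment to descend to $\bar{\unit}\in\langle\bar{M}\rangle_{\bat{A}\fp}\subseteq Q(\cat{C})$, and the Gabriel correspondence then yields $\hat{\unit}\in\cat{C}$, a non-zero representable, contradicting $\cat{C}\cap\yoneda(\cat{T}^c)=0$. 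The main obstacle is the transport step through the non-monoidal $\bar{U}$: the projection formula is precisely what makes it possible to carry the tensor-ideal closure across the adjunction, after which local coherence handles the descent from $\bat{A}$ to $\bat{A}\fp$.
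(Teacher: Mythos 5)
Your overall strategy is the same as the paper's --- use Theorem~\ref{thm:field}\,\eqref{it:field-Serre} to see that the non-zero object $\hat F(M)$ generates everything on the $\cat{F}$-side, transport this back through the right adjoint so that $\hat\unit$ lands in the larger ideal, contradiction --- but the implementation is genuinely different. The paper works with localizing subcategories of $\MF$ and $\MT$, invokes Hypothesis~\ref{hyp:res}\,\eqref{it:res3} to replace the generators $\hat y\otimes\hat F(M)$, $y\in\cat{F}^c$, by $\hat F(\hat X\otimes M)$, $X\in\cat{T}$, and then must deal separately with the kernel $L=\Ker(\hat\eta)$ of $\hat\unit_{\cat{T}}\to\hat\bbE$, observing that $L\in\Ker(\hat F)$. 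You instead work with Serre $\otimes$-ideals in the quotient $\bat{A}$, where $\bar\eta$ is already a monomorphism (your argument that the faithful exact $\bar F$ reflects monos is correct), and you transport the ideal via a projection formula for $\bar F\adj\bar U$, which sidesteps condition~\eqref{it:res3} entirely; the descent from $\langle\bar M\rangle$ in $\bat{A}$ to $\bat{A}\fp$ via local coherence and the pullback along $Q$ are fine.

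The one step whose justification you must repair is the projection formula itself. It is \emph{not} true that a strong monoidal left adjoint automatically satisfies $\bar U(\bar F(X)\otimes L)\cong X\otimes\bar U(L)$; in general one only has a canonical map $X\otimes\bar U(L)\to\bar U(\bar F(X)\otimes L)$. The formula does hold here, but the proof needs two inputs: $\bar U=Q\circ(F^c)^*$ preserves colimits (restriction functors and Gabriel quotient functors do), and $\bat{A}$ is generated under colimits by the objects $\bar x=Q(\hat x)$ for $x\in\cat{T}^c$, for which the canonical map is an isomorphism by the usual duality computation, since $\bar F(\bar x)=\widehat{F(x)}$ is rigid in $\MF$. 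This is the module-category analogue of Proposition~\ref{prop:BN-rep}, which the paper establishes only at the triangulated level (and implicitly, via the monad identification, only for the composite $\hat U\hat F$). With that lemma supplied your proof is complete, and it has the mildly interesting feature of apparently not using Hypothesis~\ref{hyp:res}\,\eqref{it:res3} at all, since you never need faithfulness of $\bar U$, only of $\bar F$.
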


\begin{proof}
Since $\hat{F}$ is monoidal its kernel $\Ker(\hat{F})$ is a $\otimes$-ideal. Thus $\cat{B}=(\Ker\hat{F})\fp$ is a Serre $\otimes$-ideal in $\cat{A}\fp$. We have assumed in Condition~\eqref{it:res2} that $F$ has no non-zero compact objects in its kernel so $\cat{B}$ does not meet $\SET{\hat c}{c\in\cat{T}^c,\ c\neq0}$ as wanted. Thus there exists a $\cat{T}^c$-maximal Serre $\otimes$-ideal $\cat{B}'$ containing~$\cat{B}$ and we claim it is equal to~$\cat{B}$. Suppose ab absurdo that there exists $M\in\cat{B}'\setminus\cat{B}$. This implies that $\hat F(M)\neq0$ in $\mF$. By Theorem~\ref{thm:field}\,\eqref{it:field-Serre}, we know that this non-zero object must generate the whole of~$\mF$ as a Serre $\otimes$-ideal. By Lemma~\ref{lem:id-Tc}, $\mF$ is generated as a Serre (non-ideal) subcategory by~$\SET{\hat y\otimes \hat F(M)}{y\in\cat{F}^c}$. Hence $\MF$ is generated as a localizing (non-ideal) subcategory by the same objects. By Condition~\eqref{it:res3}, we can replace the collection of $y\in\cat{F}^c$ by $F(X)$ for $X\in\cat{T}$. So, we have shown that $\MF$ is generated as a localizing subcategory by~$\SET{\hat F(\hat X\otimes M)}{X\in\cat{T}}$. In particular, $\hat \unit_{\cat F}$ belongs to that localizing subcategory. Applying~$\hat U$, we see that $\hat\bbE=\hat U(\hat\unit_{\cat{F}})$ belongs to the localizing subcategory of~$\cat{A}$ generated by the objects $\hat U\hat F(\hat X\otimes M)\simeq \hat X\otimes \hat{U}\hat{F}(M)\simeq \hat X\otimes \hat{\bbE}\otimes M$, for $X\in\cat{T}$. But all these objects $\hat X\otimes \hat{\bbE}\otimes M$ belong to the $\otimes$-ideal~$\overrightarrow{\cat{B}'}$ since $M$ does. In short, we have shown that $\hat\bbE=\hat U(\hat\unit_{\cat{F}})$ belongs to~$\overrightarrow{\cat{B}'}$. On the other hand, if we consider the exact sequence in~$\cat{A}$
\[
0 \to L \to \hat\unit_{\cat{T}} \otoo{\hat \eta} \hat U \hat F(\hat \unit_{\cat T})=\hat\bbE,
\]
this kernel $L=\Ker(\eta)$ belongs to~$\overrightarrow{\cat{B}}=\Ker(\hat F)$. This is always true since $F(\eta)$ is a monomorphism by the unit-counit relation. Hence $L\in\overrightarrow{\cat{B}}\subseteq\overrightarrow{\cat{B}'}$. Consequently, $\overrightarrow{\cat{B}'}$ also contains the middle term of the above sequence. This relation $\hat \unit_{\cat{T}}\in\cat{B}'$ contradicts~$\cat{B}'$ avoiding the non-zero elements of~$\cat{T}^c$.
\end{proof}

\begin{Rem}
The image $\hat{\bbE}$ of the ring-object $\bbE=U(\unit_{\cat{F}})$ in~$\cat{A}$ is $\Ker(\hat{F})$-local. Indeed, $\hat{\bbE}\cong \hat{U}(\hat\unit_{\cat{F}})\cong R\bar{U}(\hat\unit_{\cat{F}})$ belongs to the image of~$R\colon\bat{A}\to \cat{A}$.
\end{Rem}

In summary, we have proved that if $\cat{T}$ has a tt-residue field $\cat{F}$ as in Hypothesis~\ref{hyp:res} then it gives rise to a $\cat{T}^c$-maximal Serre $\otimes$-ideal $\cat{B}$ in $\mT$. Moreover, the abelian shadow $\MF$ of the tt-field $\cat{F}$ can be reconstructed from the abelian residue field $\bat{A}$ that we have constructed. Thus if there is an honest tt-residue field it gives rise to the structure we have been considering in $\MT$.

We conclude with an example.

\begin{Exa}
Let $R = (R,\gm,k)$ be a discrete valuation ring. Then $\Der(R)$ is local and $\pi^*\colon \Der(R) \to \Der(k)$ verifies Hypothesis~\ref{hyp:res}. Moreover, we know that $\Der_\gm(R)$ is minimal. Thus we can apply Corollary~\ref{cor:unique} to deduce that there is a unique $\Dperf(R)$-maximal Serre $\otimes$-ideal $\cat{B}$ of $\mmod\Dperf(R)$. By Proposition~\ref{prop:B-max} it is none other than the kernel of the induced functor
\begin{displaymath}
\widehat{\pi^*}\colon \mmod\Dperf(R) \to \mmod\Dperf(k).
\end{displaymath}
The subcategory $\cat{B}$ can be described a bit more explicitly as follows. Let $t\in\gm$ be a uniformizer and consider the corresponding triangle
\begin{displaymath}
\xymatrix{
R \ar[r]^-t & R \ar[r]^-\pi & k \ar[r] & \Sigma R
}
\end{displaymath}
giving rise to $k$. The object
\begin{displaymath}
I = \im(\hat t\colon \hat R \to \hat R)
\end{displaymath}
in $\mmod\Dperf(R)$ is none other than the $I$ from Proposition~\ref{prop:I-gens} and thus one can describe $\cat{B}$ as the Serre $\otimes$-ideal generated by $I$. The corresponding $E$ is just the residue field $k$, which is endofinite and thus pure-injective as required.
\end{Exa}

\goodbreak
\begin{appendix}
\section{Generalities on Grothendieck categories}
\label{app:Grothendieck}%
\medbreak

We record some general facts and constructions concerning Grothendieck categories which we used throughout. Everything in this section is standard, although there do not necessarily exist convenient references. As a result we indicate some of the proofs.  We tacitly assume that all functors are additive unless explicitly mentioned otherwise. We begin with a well-known fact.

\begin{Thm}\label{thm:GAFT}
Let $F\colon \cat{A} \to \cat{C}$ be a functor between Grothendieck categories. Then
\begin{enumerate}[\rm(a)]
\item
$F$ has a right adjoint if and only if it preserves colimits;
\item
$F$ has a left adjoint if and only if it preserves limits.
\end{enumerate}
\end{Thm}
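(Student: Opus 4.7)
The forward directions are standard category theory and require no properties specific to Grothendieck categories: any right adjoint preserves limits and any left adjoint preserves colimits, as a direct consequence of the adjunction natural isomorphism. So the content of the theorem lies in the converse implications.

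For the converse implications, I would invoke the Special Adjoint Functor Theorem (SAFT) and its dual. This reduces the proof to verifying that a Grothendieck category $\cat{A}$ enjoys the following well-known properties: it is complete and cocomplete, well-powered and co-well-powered, admits a (small) generator, and admits an injective cogenerator. All of these are classical; the most substantive is the existence of an injective cogenerator, which is the familiar theorem that a Grothendieck category has enough injectives (a consequence of AB5 together with the existence of a generator, as in Gabriel's thesis).

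For part~(b), given that $F$ preserves limits, the classical SAFT applies: $\cat{A}$ is complete, well-powered, and has a cogenerator, so the solution set condition is automatic and $F$ admits a left adjoint. For part~(a), given that $F$ preserves colimits, one applies the dual SAFT, i.e.\ SAFT to $F\op\colon\cat{A}\op\to\cat{C}\op$; the required hypotheses on $\cat{A}$ translate into cocompleteness, co-well-poweredness, and existence of a generator, which all hold. Hence $F$ admits a right adjoint. The main obstacle, if any, is co-well-poweredness in the dual argument; one could bypass it by instead using that every Grothendieck category is locally $\kappa$-presentable for some regular cardinal $\kappa$, so that cocontinuous $F$ is determined by its restriction to an essentially small subcategory of $\kappa$-presentables, whence the solution set condition for the general adjoint functor theorem is immediate and a right adjoint can be produced by a pointwise Kan extension formula.
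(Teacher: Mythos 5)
The paper states this theorem as ``a well-known fact'' and offers no proof, so there is no argument of the authors' to compare yours against; the only question is whether your proof stands on its own, and it does. The forward implications are indeed just the standard facts that left adjoints preserve colimits and right adjoints preserve limits. For part~(b) the Special Adjoint Functor Theorem applies exactly as you say: a Grothendieck category is complete, well-powered, and admits an injective cogenerator, so any limit-preserving $F$ has a left adjoint. For part~(a), the one hypothesis you single out as a possible obstacle --- co-well-poweredness of $\cat{A}$, needed to apply SAFT to $F\op\colon\cat{A}\op\to\cat{C}\op$ --- is in fact immediate in the abelian setting: every epimorphism in an abelian category is the cokernel of its kernel, so quotient objects of an object are in bijection with its subobjects, and co-well-poweredness follows from well-poweredness. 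Hence the dual SAFT applies directly, and your fallback via local presentability and the General Adjoint Functor Theorem, while also perfectly valid (Grothendieck categories are locally presentable by Gabriel--Popescu), is not actually needed. One cosmetic remark: SAFT does not verify ``the solution set condition'' --- it replaces it by the cogenerator and well-poweredness hypotheses --- but this is a matter of phrasing, not of substance.
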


\begin{Prop}\label{prop:closed}
Let $(\cat{A}, \otimes, \mathbf{1})$ be a symmetric monoidal Grothendieck category. If the monoidal product $\otimes$ is colimit-preserving in both variables then $\cat{A}$ is closed i.e.\ $\otimes$ admits a right adjoint in two variables
\begin{displaymath}
\ihom\colon \cat{A}^\mathrm{op} \times \cat{A} \to \cat{A}.
\end{displaymath}
\end{Prop}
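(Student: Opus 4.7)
The plan is to apply the adjoint functor theorem provided just above as Theorem~\ref{thm:GAFT} pointwise in the first variable, and then promote the resulting family of right adjoints to a bifunctor by the standard parameter trick.

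First, fix $X \in \cat{A}$ and consider the functor $X \otimes - \colon \cat{A} \to \cat{A}$. By the hypothesis that $\otimes$ preserves colimits in each variable, this functor preserves all (small) colimits. Both source and target are Grothendieck categories, so Theorem~\ref{thm:GAFT}(a) applies and produces a right adjoint, which we denote $\ihom(X,-) \colon \cat{A} \to \cat{A}$, equipped with a natural isomorphism
\[
\Hom_{\cat{A}}(X \otimes Y, Z) \;\cong\; \Hom_{\cat{A}}(Y, \ihom(X,Z))
\]
natural in $Y$ and $Z$.

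Next I would upgrade the family $\{\ihom(X,-)\}_{X \in \cat{A}}$ to an actual bifunctor $\cat{A}\op \times \cat{A} \to \cat{A}$. Given a morphism $f \colon X \to X'$ in $\cat{A}$, the natural transformation $f \otimes - \colon X \otimes - \Rightarrow X' \otimes -$ induces, via the mate construction (i.e.\ by conjugating through the two adjunctions), a natural transformation $\ihom(f,-) \colon \ihom(X',-) \Rightarrow \ihom(X,-)$. Functoriality in $X$ (in the opposite direction) follows from the uniqueness of adjoints and the fact that mate-taking is itself functorial. Combined with the functoriality in the second variable already built into $\ihom(X,-)$, this assembles into a bifunctor $\ihom \colon \cat{A}\op \times \cat{A} \to \cat{A}$. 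Naturality of the hom-set bijection in $X$ is automatic from the mate construction, giving the desired two-variable adjunction.

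There is no real obstacle here: the only point requiring a little care is confirming that Theorem~\ref{thm:GAFT}(a) does apply, i.e.\ that $\cat{A}$, being a Grothendieck category, is cocomplete, has a generator, and satisfies the hypotheses needed for the special adjoint functor theorem (presentability suffices, which Grothendieck categories always satisfy by Gabriel--Popescu). Once that is in hand, the rest is a formal exercise in parameterized adjunctions.
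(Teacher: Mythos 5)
Your proposal is correct and follows essentially the same route as the paper: apply Theorem~\ref{thm:GAFT}(a) to each colimit-preserving functor $X\otimes -$ to get $\ihom(X,-)$, then use the induced morphisms between right adjoints (the mate construction) to assemble these into a bifunctor adjoint to $\otimes$ in both variables. The paper likewise leaves the assembly step as a routine verification, so there is nothing to add.
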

\begin{proof}
For each $a\in \cat{A}$ the existence of $\ihom(a,-)$ follows from Theorem~\ref{thm:GAFT} applied to the colimit-preserving functor $a\otimes -$. Given a morphism $f\in \cat{A}(a,a')$ there is a corresponding natural transformation $a\otimes (-) \to a'\otimes (-)$, which induces a morphism
\begin{displaymath}
\ihom(a', -) \to \ihom(a, -)
\end{displaymath}
between right adjoints. It is routine to verify that these transformations assemble to give a bifunctor $\ihom$ which is right adjoint to $\otimes$ in both variables.
\end{proof}

\begin{Rem}
\label{rem:loc-coh}%
The Grothendieck categories we use in the text are \emph{locally coherent}, meaning the finitely presented objects form an \emph{abelian} subcategory~$\cat{A}\fp\subset \cat{A}$ and every object of~$\cat{A}$ is a filtered colimit of finitely presented ones (\ie $\cat{A}$ is \emph{locally finitely presented}).
\end{Rem}

We fix a locally coherent Grothendieck category~$\cat{A}$ with colimit-preserving tensor~$\otimes$ and enough flats with respect to~$\otimes$. We now want to justify the various claims made in Proposition~\ref{prop:A/B}, most of which can be found in~\cite{Grothendieck57,Gabriel62}.

Given $\cat{B}\subseteq \cat{A}\fp$ a Serre $\otimes$-ideal, we can consider
\begin{equation}
\label{eq:B^to}%
\cat{L}=\overrightarrow{\cat{B}}:=\Big\{M\in\cat{A} \,\Big|\,
\ourfrac{\displaystyle\textrm{every }f\colon P\to M\textrm{ with }P\in\cat{A}\fp}
{\displaystyle\textrm{ factors as }P\to N\to M\textrm{ with }N\in\cat{B}}
\Big\}.
\end{equation}
(Assuming $P$ finitely generated \emph{projective} does not change the
above definition in the case of~$\cat{A}=\MT$, \ie we can test the
condition with $P=\hat x$ for $x\in\cat{T}^c$.)  These are the objects
$M$ such that every finitely generated subobject of $M$ is a quotient
of an object of~$\cat{B}$, \ie the filtered colimits (in~$\cat{A}$) of
objects of~$\cat{B}$.  The above category $\overrightarrow{\cat{B}}$ is a
\emph{localizing} (\ie Serre and closed under coproducts)
$\otimes$-ideal of~$\cat{A}$. For the $\otimes$-ideal property, note
that $\overrightarrow{\cat{A}\fp}=\cat{A}$ and that $\cat{B}$ is
$\otimes$-ideal in~$\cat{A}\fp$. Moreover, the subcategory of finitely
presented objects of~$\overrightarrow{\cat{B}}$ exactly coincides with
  $\cat{B}=(\overrightarrow{\cat{B}})\fp=\overrightarrow{\cat{B}}\cap
  \cat{A}\fp$.

The Grothendieck-Gabriel quotient $\bat{A}=\cat{A}/\cat{L}$ by the Serre subcategory~$\cat{L}$ is the localization with respect to all morphisms $s\colon M\to M'$ whose kernel and cokernel belong to~$\cat{L}$. We have a localization functor~$Q$
\[
\xymatrix{
\cat{A} \ar@<-.3em>[d]_-{Q}
\\
\bat{A} \ar@<-.3em>[u]_-{R}
}
\]
which admits a right adjoint~$R$ when $\cat{L}$ is localizing. To check that the tensor descends to the quotient $\otimes\colon\bat{A}\times\bat{A}\to \bat{A}$ in such a way that $Q\colon\cat{A}\to\bat{A}$ is monoidal, it suffices to check that the collection of morphisms $s\colon M\to M'$ with kernel and cokernel in~$\cat{L}$ are preserved by tensoring with any $N\in\cat{A}$. For this, we use of course that~$\cat{L}$ is a $\otimes$-ideal. Decomposing~$s$, we can treat separately the case where $s$ is a monomorphism with cokernel in~$\cat{L}$, and the case where $s$ is an epimorphism with kernel in~$\cat{L}$; the latter is easy since $\Ker(s\otimes N)$ is a quotient of $\Ker(s)\otimes N$. For the former, it is enough to convince oneself that for any $L\in \cat{L}$ the object $\Tor_1(N,L)$ lies in~$\cat{L}$. This follows by computing with a flat resolution of $N$ and using that $\cat{L}$ is both Serre and a~$\otimes$-ideal. Now, for every flat object $M\in\cat{A}$, we have a commutative diagram
\[
\xymatrix@C=5em{
\cat{A} \ar[r]^-{M\otimes -} \ar@{->>}[d]_-{Q}
& \cat{A} \ar@{->>}[d]^-{Q}
\\
\bat{A} \ar[r]^-{Q(M)\otimes -}
& \bat{A}
}\]
whose top-right composition is exact and therefore so is the bottom functor, since $Q$ is universal among \emph{exact} functors. Therefore $Q(M)$ remains flat.

The induced monoidal structure on $\bat{A}$ is compatible with colimits, i.e.\ for every $Y\in \bat{A}$ the functor $Y\otimes (-)$ preserves colimits. Indeed, suppose $J$ is a small category and $F\colon J\to \bat{A}$ is a functor. We can assume $Y$ is of the form $QM$, for instance by using the natural isomorphism $\Id_{\bat{A}} \cong QR$. Colimit preservation is a consequence of the following string of natural isomorphisms
\begin{align*}
\colim QM \otimes F & \cong \colim QM\otimes QRF \cong \colim Q(M\otimes RF) \cong Q\colim (M\otimes RF) \\
& \cong Q(M\otimes \colim RF) \cong QM \otimes \colim QRF \cong QM\otimes \colim F.
\qedhere\end{align*}
Hence by Proposition~\ref{prop:closed}, the symmetric monoidal Grothendieck category $\bat{A}$ is closed. We denote the internal hom on $\cat{A}$ and~$\bat{A}$ by $\ihom_\cat{A}$ and $\ihom_{\bat{A}}$ respectively.

\begin{Lem}\label{lem:ihomcompatible}
There is a canonical natural isomorphism of bifunctors
\begin{displaymath}
\ihom_\cat{A}(-, R-) \cong R(\ihom_{\bat{A}}(Q-, -)).
\end{displaymath}
In particular, the functor $R$ is closed i.e.\ it preserves the internal hom.
\end{Lem}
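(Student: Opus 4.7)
My plan is to prove the natural isomorphism by the standard Yoneda-type argument, leveraging the two facts that $Q \dashv R$ and that $Q$ is strong monoidal (which has already been established in the construction of the tensor on $\bat{A}$).

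First I would fix $M \in \cat{A}$ and $N \in \bat{A}$, and compute, for an arbitrary test object $T \in \cat{A}$, the hom-set $\Hom_{\cat{A}}(T, \ihom_{\cat{A}}(M, RN))$. Unwinding via the $\otimes$-$\ihom$ adjunction in $\cat{A}$ turns it into $\Hom_{\cat{A}}(T \otimes M, RN)$. Then the $Q \dashv R$ adjunction converts this to $\Hom_{\bat{A}}(Q(T \otimes M), N)$. Using monoidality of $Q$, namely $Q(T \otimes M) \cong QT \otimes QM$, this becomes $\Hom_{\bat{A}}(QT \otimes QM, N)$, and one last application of the $\otimes$-$\ihom$ adjunction (now in $\bat{A}$) rewrites it as $\Hom_{\bat{A}}(QT, \ihom_{\bat{A}}(QM, N))$. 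A final use of $Q \dashv R$ gives $\Hom_{\cat{A}}(T, R\,\ihom_{\bat{A}}(QM, N))$. By the Yoneda lemma this chain of natural isomorphisms yields the required canonical isomorphism $\ihom_{\cat{A}}(M, RN) \cong R(\ihom_{\bat{A}}(QM, N))$, and bifunctoriality of all the adjunctions used ensures naturality in both variables.

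For the ``in particular'' statement, I would substitute $M = RN'$ for $N' \in \bat{A}$ in the established isomorphism, obtaining
\[
\ihom_{\cat{A}}(RN', RN) \cong R(\ihom_{\bat{A}}(QRN', N)) \cong R(\ihom_{\bat{A}}(N', N)),
\]
where the second isomorphism uses the counit iso $QR \cong \Id_{\bat{A}}$ from Proposition~\ref{prop:A/B}. This is precisely the statement that $R$ preserves the internal hom.

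No step here is really an obstacle; the only thing to be slightly careful about is that $Q$ being strong monoidal (rather than merely lax monoidal) is what makes the key step $Q(T \otimes M) \cong QT \otimes QM$ an isomorphism, and this was arranged in Proposition~\ref{prop:A/B}(c) precisely so that this kind of adjoint transposition would go through.
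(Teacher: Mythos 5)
Your proof is correct and is essentially identical to the paper's: the same chain of natural isomorphisms ($\otimes$--$\ihom$ adjunction in $\cat{A}$, the $Q\dashv R$ adjunction together with strong monoidality of $Q$, the $\otimes$--$\ihom$ adjunction in $\bat{A}$, and $Q\dashv R$ again) followed by Yoneda, and the same deduction of the ``in particular'' statement via $QR\cong\Id_{\bat{A}}$.
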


\begin{proof}
For $M\in \cat{A}$ and $Y\in \bat{A}$ the isomorphism $\ihom_\cat{A}(M, R Y)\simeq R(\ihom_{\bat{A}}(Q M,Y))$ is given by Yoneda's Lemma from the natural isomorphism
\begin{align*}
\cat{A}(?, \ihom_\cat{A}(M, R Y)) &\cong \cat{A}(? \otimes M, R Y) \cong \bat{A}(Q(?) \otimes Q M, Y) \\
&\cong \bat{A}(Q(?), \ihom_{\bat{A}}(Q M, Y)) \cong \cat{A}(?, R(\ihom_{\bat{A}}(Q M, Y))).
\end{align*}
The final statement of the lemma then comes down to noting that for $Y,Y'\in \bat{A}$
\begin{displaymath}
R(\ihom_{\bat{A}}(Y, Y')) \cong R(\ihom_{\bat{A}}(QR Y, Y')) \cong \ihom_\cat{A}(R Y, R Y').
\qedhere
\end{displaymath}
\end{proof}

\begin{Prop}\label{prop:fp-kernel}
Let $\cat{A}$ and $\cat{C}$ be locally coherent Grothendieck categories (Remark~\ref{rem:loc-coh}) and $F\colon \cat{A} \to \cat{C}$ an exact colimit-preserving functor which sends finitely presented objects to finitely presented objects. Then $\Ker F$ is generated by finitely presented objects of $\cat{A}$, i.e.\ $\Ker F$ is the filtered colimit closure of $\Ker F \cap \cat{A}\fp$.
\end{Prop}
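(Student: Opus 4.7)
The plan is to show that for any $M\in\Ker F$, the comma category of maps $P\to M$ with $P\in\Ker F\cap\cat{A}\fp$ is cofinal in the comma category of maps $Q\to M$ with $Q\in\cat{A}\fp$. Since $\cat{A}$ is locally finitely presented, the latter is filtered and its colimit recovers $M$; cofinality will then automatically promote $M$ to a filtered colimit of objects of $\Ker F\cap\cat{A}\fp$, which is precisely what is to be proved.

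The key step is the cofinality. Given any map $P\to M$ with $P\in\cat{A}\fp$, write $M=\colim_{j\in J}Q_j$ as a filtered colimit of finitely presented objects. By finite presentability of $P$, the given map factors as $P\to Q_{j}\to M$ for some $j$. Now apply $F$: since $F$ preserves colimits and $F(M)=0$, we have $\colim_k F(Q_k)=0$. The object $F(Q_j)$ is finitely presented in $\cat{C}$ by hypothesis, so its image in this filtered colimit vanishing means there exists $k\ge j$ in $J$ such that the transition map $F(Q_j)\to F(Q_k)$ is already zero. Let $I=\im(Q_j\to Q_k)$, taken in $\cat{A}$. Since $\cat{A}$ is locally coherent, $\cat{A}\fp$ is abelian and $I$ lies in $\cat{A}\fp$. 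By exactness of $F$,
\[
F(I)=\im(F(Q_j)\to F(Q_k))=0,
\]
so $I\in\Ker F\cap\cat{A}\fp$. The original map $P\to M$ then factors as
\[
P\;\longrightarrow\;Q_j\;\twoheadrightarrow\;I\;\hookrightarrow\;Q_k\;\longrightarrow\;M,
\]
the right-hand composite $I\hookrightarrow Q_k\to M$ providing the required object of $\Ker F\cap\cat{A}\fp$ through which $P\to M$ factors. This is exactly the cofinality condition.

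Putting things together, the canonical cocone over $\Ker F\cap\cat{A}\fp\,\downarrow\, M$ is filtered (cofinal in a filtered category) with colimit $M$, so $M$ is a filtered colimit of objects in $\Ker F\cap\cat{A}\fp$, as required. I expect no serious obstacle: the only delicate point is ensuring that $I$ is genuinely finitely presented, which is where local coherence (rather than mere local finite presentability) is used, and that $F(Q_j)$ being finitely presented in $\cat{C}$ allows us to detect vanishing in the filtered colimit at a finite stage, which is where the hypothesis $F(\cat{A}\fp)\subseteq\cat{C}\fp$ enters.
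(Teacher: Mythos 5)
Your proof is correct, but it takes a genuinely different route from the paper's. You argue directly: writing $M=\colim_j Q_j$ over the filtered comma category of finitely presented objects over $M$, you use finite presentability of $F(Q_j)$ in $\cat{C}$ to find a transition map $Q_j\to Q_k$ killed by $F$, take its image $I$ (finitely presented by local coherence, and in $\Ker F$ by exactness), and conclude by cofinality of $(\Ker F\cap\cat{A}\fp)\downarrow M$ inside $\cat{A}\fp\downarrow M$. The paper instead first passes to the Gabriel quotient of $\cat{A}$ by the localizing subcategory generated by $\Ker F\cap\cat{A}\fp$, thereby reducing to the case $\Ker F\cap\cat{A}\fp=0$, and then shows $F$ is faithful outright: the right adjoint $G$ of $F$ preserves filtered colimits (because $F$ preserves finitely presented objects), the unit $\eta$ is a monomorphism on finitely presented objects, and exactness of filtered colimits in a Grothendieck category propagates this to all objects. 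Your argument is more elementary and self-contained -- it avoids the adjoint functor theorem and, more importantly, the implicit verification that the quotient category is again locally coherent and that the induced functor still satisfies the hypotheses; the paper's reduction leans on its Proposition on Gabriel quotients for that. The paper's version, on the other hand, yields the slightly stronger intermediate statement that the induced functor on the quotient is faithful, which is in the spirit of how the result is used elsewhere in the text. Two tiny points worth making explicit in a final write-up: the index "$k\ge j$" should really be "a morphism $j\to k$ in the filtered index category", and cofinality of a full subcategory of a filtered category requires connectedness of the relevant comma categories, not just nonemptiness -- though this follows automatically, by a standard zigzag argument, from the factorization property you establish.
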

\begin{proof}
Replacing $\cat{A}$ by $\cat{A}$ modulo the localising subcategory generated by $\Ker F\cap \cat{A}\fp$ and $F$ by the induced functor to $\cat{C}$ we can reduce to showing that if $\Ker F \cap \cat{A}\fp$ is trivial then $F$ has no kernel. Suppose then that this is the case. We first note that $F$ is faithful on $\cat{A}\fp$. This is true for any exact conservative functor on an abelian category, which one sees by testing vanishing via the image object. As the functor $F$ preserves colimits, by Theorem~\ref{thm:GAFT}, it has a right adjoint $G$ which must preserve filtered colimits because $F$ preserves finitely presented objects. As shown above $F$ is faithful when restricted to $\cat{A}\fp$. Thus the components of the unit $\eta$ of this adjunction at finitely presented objects are monomorphisms. Since $\cat{A}$ is locally coherent it is, in particular, locally finitely presented -- every object $X$ of $\cat{A}$ is a filtered colimit of finitely presented objects. As both $F$ and $G$ preserve filtered colimits we thus see that $\eta_X$ can be written as a filtered colimit of components of $\eta$ at finitely presented objects. So we see $\eta_X$ is a filtered colimit of monomorphisms and hence, since $\cat{A}$ is Grothendieck, is itself a monomorphism. This proves $F$ is faithful and completes the argument.
\end{proof}

We also need the following easy consequence of Beck's monadicity theorem.

\begin{Prop}\label{prop:monadic}
Let $G\colon \cat{C} \to \cat{A}$ be an exact limit-preserving functor between Grothendieck categories. If $G$ is faithful then $G$ is monadic.
\end{Prop}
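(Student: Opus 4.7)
My plan is to deduce this directly from (crude) Beck's monadicity theorem. Beck's theorem says $G$ is monadic whenever (i) $G$ has a left adjoint, (ii) $G$ reflects isomorphisms, and (iii) $\cat C$ has, and $G$ preserves, coequalizers of $G$-split pairs. I will check each of these three conditions in turn; each will be essentially immediate in our abelian/Grothendieck setting.

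For (i), since $G$ preserves limits and both $\cat C$ and $\cat A$ are Grothendieck categories, the existence of a left adjoint $F\dashv G$ follows directly from Theorem~\ref{thm:GAFT}\,(b). For (iii), any Grothendieck category is cocomplete, so $\cat C$ has all coequalizers; moreover, in an abelian category the coequalizer of a parallel pair $f,g\colon X\to Y$ is computed as $\coker(f-g)$, and $G$, being exact (hence in particular right exact), preserves cokernels. Thus $G$ preserves all coequalizers of parallel pairs, a fortiori the $G$-split ones.

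The only point requiring a small argument is (ii), conservativity, which I would derive from the standing assumption of faithfulness together with exactness. First note that a faithful additive functor reflects the zero object: if $G(X)=0$ then $\id_X$ and $0_X$ both map to $0=\id_{G(X)}$, hence coincide by faithfulness, so $X=0$. Now let $f\colon X\to Y$ in $\cat C$ with $G(f)$ an isomorphism. Exactness of $G$ gives $G(\Ker f)\cong \Ker G(f)=0$ and $G(\coker f)\cong\coker G(f)=0$, whence $\Ker f=0$ and $\coker f=0$ by the preceding remark; so $f$ is an isomorphism.

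With (i)--(iii) established, Beck's monadicity theorem applies and $G$ is monadic, as claimed. I do not anticipate any genuine obstacle: this is essentially a packaging of standard facts, and the only mildly non-formal ingredient is the passage from faithfulness plus exactness to conservativity sketched above.
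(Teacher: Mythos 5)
Your proof is correct and follows essentially the same route as the paper: obtain the left adjoint from Theorem~\ref{thm:GAFT}, observe that exactness and faithfulness give conservativity and preservation of (all) coequalizers, and invoke Beck's monadicity theorem. The paper merely states these verifications as ``immediate,'' whereas you spell out the faithful-plus-exact-implies-conservative step; the content is the same.
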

\begin{proof}
By Theorem~\ref{thm:GAFT} we know $G$ has a left adjoint $F$. In addition, since $\cat{C}$ is abelian and $G$ is faithful and exact, it is immediate that $G$ is conservative, $\cat{C}$ has all coequalisers (not just $G$-split ones), and $G$ preserves these coequalisers. Thus we may apply Beck's monadicity theorem~\cite{MacLane98} to deduce that $G$ is monadic.
\end{proof}

\end{appendix}

\bibliographystyle{alpha}%
\bibliography{BKS-biblio}

\end{document}